\tikzset{negated/.style={
		decoration={markings,
			mark= at position 0.5 with {
				\node[transform shape] (tempnode) {$\backslash$};
			}
		},
		postaction={decorate}
	}
}
\newtheorem{thm}{Theorem}[section]
\newtheorem*{thm*}{Theorem}
\newtheorem{lem}[thm]{Lemma}
\newtheorem{fact}[thm]{Fact}
\newtheorem{prop}[thm]{Proposition}
\newtheorem*{prop*}{Proposition}
\newtheorem{cor}[thm]{Corollary}
\newtheorem*{cor*}{Corollary}
\theoremstyle{definition}
\newtheorem{defn}[thm]{Definition}
\newtheorem*{defn*}{Definition}
\newtheorem{notation}[thm]{Notation}
\newtheorem{remark}[thm]{Remark}
\newtheorem{question}[thm]{Question}
\newtheorem*{question*}{Question}
\newtheorem*{Pquestion*}{Popa's question}
\newtheorem*{conv*}{Convention}
\def\bb{\mathbb}
\def\bb{\mathbb}
\def\G{\Gamma}
\def\cal{\mathcal}
\newcommand{\cstar}{$\mathrm{C}^*$}
\def \baf{\equiv^{\operatorname{bf}}}
\def \nbaf{\nequiv^{\operatorname{bf}}}
\newcommand \bafO[1]{\equiv^{\operatorname{bf},{#1}}}
\newcommand{\res}{\upharpoonright}
\def\dotminussym#1#2{%
	\setbox0=\hbox{$\m@th#1-$}%
	\kern.5\wd0%
	\hbox to 0pt{\hss\hbox{$\m@th#1-$}\hss}%
	\raise.6\ht0\hbox to 0pt{\hss$\m@th#1.$\hss}%
	\kern.5\wd0}
\DeclareMathOperator{\tr}{tr}
\def \G{\mathfrak{G}}
\def \acts{\curvearrowright}
\newcommand{\mc}{\mathcal}
\newcommand{\Mod}{\operatorname{Mod}}
\begin{document}
	
	%%%%%%%%%%%%%%%%%%%%%%%%%%%%%%%%%%%%%%%%%%%%%%
	
	\title{Back-and-forth equivalent group von Neumann algebras}
	\author{Isaac Goldbring}
	\address{Department of Mathematics\\University of California, Irvine, 340 Rowland Hall (Bldg.\# 400),
		Irvine, CA 92697-3875}
	\email{isaac@math.uci.edu}
	\urladdr{http://www.math.uci.edu/~isaac}

    \author{Matthew Harrison-Trainor}
    \address{Department of Mathematics, Statistics, and Computer Science\\
University of Illinois Chicago, 851 S Morgan St., Chicago, IL 60607}
	\email{mht@uic.edu}
	\urladdr{https://homepages.math.uic.edu/~mht/}
	
	\thanks{The first author was partially supported by NSF grant DMS-2054477. The second author was partially supported by NSF grant DMS-2153823.}

	\begin{abstract}
	We prove that if $G$ and $H$ are $\alpha$-back-and-forth equivalent groups (in the sense of computable structure theory) for some ordinal $\alpha \geq \omega$, then their group von Neumann algebras $L(G)$ and $L(H)$ are also $\alpha$-back-and-forth equivalent. In particular, if $G$ and $H$ are $\omega$-back-and-forth-equivalent groups, then $L(G)$ and $L(H)$ are elementarily equivalent; this is known to fail under the weaker hypothesis that $G$ and $H$ are merely elementarily equivalent. We extend this result to crossed product von Neumann algebras associated to Bernoulli actions of back-and-forth equivalent groups.
	\end{abstract}

	\maketitle

	\section{Introduction}
 
	This paper is a contribution to the recent study of the elementary equivalence problem for tracial von Neumann algebras; see, for example, \cite{BCI,sri2, MTOA3, gold, goldpi, sri1}.  In particular, we focus on the following question, which has been asked by many researchers in the model theory of operator algebras:
	
	\begin{question}\label{mainquestion}
		If $G$ and $H$ are elementarily equivalent (countable, discrete) groups, are the group von Neumann algebras $L(G)$ and $L(H)$ elementarily equivalent?
	\end{question}
	
	For the definition of group von Neumann algebra, see Section \ref{groupOA} below.  In the recent preprint \cite{gold}, a negative answer to this question was given by the first author by showing that there exist elementarily equivalent ICC groups $G$ and $H$ such that $G$ is amenable and $H$ is not inner amenable (whence $L(G)\cong \cal R$, the hyperfinite II$_1$ factor, while $L(H)$ does not have property Gamma).  It was also observed in \cite{goldpi} that one can have non-elementarily equivalent groups whose group von Neumann algebras are in fact isomorphic.  Consequently, there is no general connection between groups being elementarily equivalent and their von Neumann algebras being elementarily equivalent.
	
	% In the paper \cite{goldpi}, joint with Jennifer Pi, we indicate some ways that one might attempt to provide a negative solution to the previous question.  
	In this paper, we show how, by strengthening the hypothesis in Question \ref{mainquestion}, one can indeed enforce that group von Neumann algebras are elementarily equivalent.  Our main result is the following:
	
	\begin{thm}
		If $G$ and $H$ are \emph{$\omega$-back-and-forth-equivalent} groups, then $L(G)$ and $L(H)$ are elementarily equivalent tracial von Neumann algebras.
	\end{thm}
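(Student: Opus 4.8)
The plan is to prove the level-preserving statement from the abstract --- if $G$ and $H$ are $\alpha$-back-and-forth equivalent for some ordinal $\alpha \ge \omega$, then so are $L(G)$ and $L(H)$ --- and then read off the theorem by taking $\alpha = \omega$ and recalling that $\omega$-back-and-forth equivalence of metric structures implies elementary equivalence (the metric Ehrenfeucht--Fra\"{\i}ss\'{e} theorem). The point is that $G \mapsto L(G)$ is tame enough to transport back-and-forth systems, and two facts do the work. First, if $\bar g \in G^m$ and $\bar h \in H^m$ have the same quantifier-free type in the language of groups --- call such a pair \emph{coherent} --- then $w(\bar g) = e \iff w(\bar h) = e$ for every word $w$, so $g_i \mapsto h_i$ extends to an isomorphism $p_{\bar g,\bar h} \colon \langle\bar g\rangle \to \langle\bar h\rangle$ of generated subgroups; this induces a $*$-isomorphism $\mathbb{C}[\langle\bar g\rangle] \to \mathbb{C}[\langle\bar h\rangle]$, $u_x \mapsto u_{p_{\bar g,\bar h}(x)}$, preserving the canonical traces (those only see the coefficient of the identity), and since $L(\langle\bar g\rangle)$ is the $\|\cdot\|_2$-closure of $\mathbb{C}[\langle\bar g\rangle]$ inside $L(G)$ with the restricted trace, uniqueness of the GNS construction upgrades it to a trace-preserving $*$-isomorphism $\widehat{p}_{\bar g,\bar h} \colon L(\langle\bar g\rangle) \to L(\langle\bar h\rangle)$ of subalgebras, compatibly under enlargement of the tuples. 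Second, $\mathbb{C}[G]$ is $\|\cdot\|_2$-dense in $L(G)$ and each of its elements is supported on a finite set $S$, hence lies in $\mathbb{C}[\langle S\rangle]$; so every element of $L(G)$ is, to arbitrary precision, captured inside $L(\langle S\rangle)$ for some finite $S$.

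I would then fix a back-and-forth system of height $\alpha$ witnessing the hypothesis on $G,H$ and pass to the canonical one, so that its level-$\beta$ relation $\bar g \approx_\beta \bar h$ is equality of (infinitary) $\Sigma_\beta$-types; thus $\approx_\beta$ refines $\approx_0 = $ ``same quantifier-free type'' (hence every pair with $\bar g \approx_\beta \bar h$ is coherent), and one round extends $\bar g \approx_{\beta+1} \bar h$ by an arbitrary \emph{finite} tuple. As the level-$\beta$ part of a back-and-forth system between $L(G)$ and $L(H)$ I take the partial $*$-isomorphisms $\widehat{p}_{\bar g,\bar h}\colon L(\langle\bar g\rangle) \to L(\langle\bar h\rangle)$ with $\bar g \approx_\beta \bar h$. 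The back-and-forth condition holds \emph{approximately}: given such a $\widehat{p}_{\bar g,\bar h}$ at level $\beta+1$, an element $a \in L(G)$, and $\varepsilon > 0$, one picks $\alpha \in \mathbb{C}[\langle\bar g, W\rangle]$ with $\|a-\alpha\|_2 < \varepsilon$ and $\|\alpha\|$ controlled (conditional expectation onto $L(\langle\bar g,W\rangle)$ followed by the Kaplansky density theorem), absorbs the finite set $W$ into $\bar g$ in one round of the group game to get $\bar g\bar g' \approx_\beta \bar h\bar h'$, and passes to $\widehat{p}_{\bar g\bar g',\bar h\bar h'}$, which extends $\widehat{p}_{\bar g,\bar h}$ and has $\alpha$ in its domain; symmetrically for elements of $L(H)$. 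Finally, two elements matched by some $\widehat{p}_{\bar g,\bar h}$ agree, up to any $\varepsilon$, on $\tau(P(\cdot))$ for every $*$-polynomial $P$ --- because $\widehat{p}$ is trace-preserving and $|\tau(P(\bar a)) - \tau(P(\bar b))| \le C_P \max_i \|a_i - b_i\|_2$ on bounded balls --- and they lie in the same operator-norm sorts since $\widehat p$ is a $*$-isomorphism. This is a (height-$\alpha$, approximate) back-and-forth system between $L(G)$ and $L(H)$, so they are $\alpha$-back-and-forth equivalent, in particular elementarily equivalent when $\alpha = \omega$.

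The feature that makes this \emph{level-preserving} --- rather than costing a factor of $\omega$ --- is precisely that one round of the back-and-forth hierarchy may adjoin an entire finite tuple, matching the finitely many generators needed to $\|\cdot\|_2$-approximate a single element of $L(G)$; and that coherence of partial isomorphisms is already available at the bottom level $\beta = 0$, so no complexity is spent securing it. The hard part will be the bookkeeping in the metric setting: for each $n$ and $\varepsilon$ one must run the $n$-round game with a suitably fast-decreasing sequence of tolerances (the later moves chosen knowing the earlier ones), and the Kaplansky step must be threaded through so that all approximants stay in the correct sort. It is worth noting that, although the operator norm on $L(G)$ probes products of arbitrarily many group elements, any fixed sentence of the language of tracial von Neumann algebras involves only $*$-polynomials of bounded degree, hence words of bounded length in the chosen generators, so no unbounded complexity ever enters the argument.

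The extension to crossed products should run along the same lines. For a fixed base, say $([0,1], \text{Lebesgue})$, the assignment $G \mapsto L^\infty([0,1]^G) \rtimes G$ is again functorial: the group isomorphism $p_{\bar g,\bar h}$ induces an equivariant isomorphism of the Bernoulli systems $[0,1]^{\langle\bar g\rangle} \cong [0,1]^{\langle\bar h\rangle}$, hence a trace-preserving $*$-isomorphism $L^\infty([0,1]^{\langle\bar g\rangle}) \rtimes \langle\bar g\rangle \to L^\infty([0,1]^{\langle\bar h\rangle}) \rtimes \langle\bar h\rangle$ of the corresponding subalgebras; and the crossed product is $\|\cdot\|_2$-generated by the group unitaries together with the cylinder functions in $\bigcup_F L^\infty([0,1]^F)$ over finite $F$. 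So a coherent partial isomorphism of groups transports exactly as before, and the same back-and-forth construction applies to the crossed products.
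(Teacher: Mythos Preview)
Your approach is correct and shares its core with the paper's: approximate elements of $L(G)$ by finitely supported elements of $\bb C[G]$, then spend one round of the group back-and-forth game matching the supports. The packaging differs. The paper inducts directly on the metric back-and-forth pseudo-distance $r_\alpha$ of \cite{metricscott}, showing $(G,\vec g)\baf_{1+\alpha}(H,\vec h)\Rightarrow r_\alpha^{L(G),L(H)}(\vec y,\vec z)=0$ for corresponding group-ring tuples (Lemma~\ref{generallemma}); you instead carry a back-and-forth system of partial trace-preserving $*$-isomorphisms $\widehat p_{\bar g,\bar h}\colon L(\langle\bar g\rangle)\to L(\langle\bar h\rangle)$. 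The one substantive difference is sort-matching. The paper proves $\|y\|=\|z\|$ by an explicit game argument that costs an extra level (Lemma~\ref{mainlemma2} needs $\baf_1$, hence the shift $1+\alpha\to\alpha$ in Theorem~\ref{main1}), whereas your observation that the operator norm of $y\in\bb C[\langle\bar g\rangle]$ computed in $L(G)$ agrees with its norm in the C$^*$-subalgebra $L(\langle\bar g\rangle)$---since $\ell^2(G)$ decomposes as a multiple of $\ell^2(\langle\bar g\rangle)$ under the left $\langle\bar g\rangle$-action---delivers $\|y\|=\|z\|$ already from $\equiv_0$. This buys you the shift-free statement $G\baf_\alpha H\Rightarrow L(G)\baf_\alpha L(H)$ at all $\alpha$, though for $\alpha\geq\omega$ nothing is gained. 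The ``bookkeeping in the metric setting'' you defer is exactly what the paper makes precise via the weak-modulus machinery: the continuity of $r_\beta$ with respect to $\Omega\res_n$ is what lets the $\varepsilon$-approximation go through the inductive step cleanly. Your crossed-product sketch likewise parallels Section~\ref{sec:gpaction}.
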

	
	Here, back-and-forth equivalence, denoted $\baf_\omega$, is the back-and-forth equivalence often called the \textit{standard symmetric back-and-forth equivalence} in computable structure theory. It is defined using Ehrenfeucht-Fra\"isse (EF) games but differs from the standard model-theoretic EF games in that the first player, named Spoiler, is allowed to play tuples of arbitrary size from one of the structures, and the second player, named Duplicator, must respond with a tuple of the same size from the other structure. For structures $M$ and $N$ in the same language, one then defines $M \baf_\omega N$ to mean Duplicator can win any finite-length such game between $M$ and $N$. (The ordinal $\omega$ here refers to the fact that we consider games of any length $n < \omega$.)
	
	In contrast, in the model-theoretic EF games as they were first introduced by Ehrenfeucht and Fra\"isse, Spoiler plays a single element from one of the structures, and then Duplicator responds with a single element from the other structure, and so on. These games were introduced to capture elementary equivalence: $M$ is elementarily equivalent to $N$ (denoted $M\equiv N$) if and only if Duplicator can win any finite-length EF game between $M$ and $N$.
	
	To distinguish between these two games, we will reserve the term EF game for the original game with single elements and will call the games with tuples back-and-forth games. We note that the back-and-forth games are harder for Duplicator to win than the EF games; thus, if $M \baf_\omega N$, then $M\equiv N$.
	
	While the EF games characterize elementary equivalence, the back-and-forth games with tuples find more use with the infinitary logic $\mc{L}_{\omega_1,\omega}$. In the realm of discrete logic, back-and-forth equivalence is the right notion of equivalence that transfers across certain constructions,\footnote{These constructions are $\mc{L}_{\omega_1,\omega}$-interpretations. The right notion of $\mc{L}_{\omega_1,\omega}$-interpretation of ${M}$ in ${N}$ uses, as the domain of ${M}$ interpreted in ${N}$, a set ${N}^{< \omega} / E$ of equivalence classes of tuples from ${N}$ of \textit{arbitrary size} modulo a definable equivalence relation $E$. Thus, for example, a polynomial ring $R[x]$ is $\mathcal{L}_{\omega_1,\omega}$-interpretable in $R$. See \cite{HTM, HTMM}.} like the construction from a ring $R$ of the polynomial ring $R[x]$: if $R \baf_\omega S$, then $R[x] \baf_\omega S[x]$. Knowing a strategy for the back-and-forth game between $R$ and $S$, Duplicator can produce a strategy for the back-and-forth game between $R[x]$ and $S[x]$. When Spoiler plays a polynomial $a_n x^n + \cdots + a_1 x + a_0$ in $R[x]$, Duplicator imagines that Spoiler has played the tuple $a_n,\ldots,a_0$ in $R$, to which (following their strategy for the game between $R$ and $S$) they respond with $b_n,\ldots,b_0$. Then Duplicator responds in the game between $R[x]$ and $S[x]$ with $b_n x^n + \cdots + b_1 x + b_0$.
	
	Something similar is at work here, though the construction of $L(G)$ from $G$ is not a construction of one discrete structure from another, but of a metric structure from a discrete structure. Nevertheless, suppose one tries to show that $L(G)$ and $L(H)$ are elementarily equivalent by showing that player II can win any EF game between the two algebras.  Since $L(G)$ and $L(H)$ contain the group rings $\bb C[G]$ and $\bb C[H]$ as dense $*$-subalgebras respectively, one can always assume that player I plays elements from these subalgebras.  However, like a polynomial, an element of $\bb C[G]$ or $\bb C[H]$ mentions finitely many elements of $G$ or $H$ respectively with no a priori bound on the number of group elements involved in the finite sums. Thus, to simulate the EF game between $L(G)$ and $L(H)$, we must play the back-and-forth game between $G$ and $H$ with tuples.
 
By considering transfinite versions of the back-and-forth games, one can consider the notion of $\baf_\alpha$ between classical structures for any ordinal $\alpha$.  Modifying the approach to Scott analysis in continuous logic presented in \cite{metricscott}, we introduce the relation $\bafO{\Omega}_\alpha$ between metric structures in Subsection 2.3 below, which involves the notion of a \emph{weak modulus} $\Omega$ as part of its data.  All that being said, what we actually show is the following:

\begin{restatable*}{thm}{main}
\label{main1}
For any ordinal $\alpha$, if $G$ and $H$ are groups such that $G\baf_{1+\alpha} H$, then for any weak modulus $\Omega$, we have that $L(G) \bafO{\Omega}_\alpha L(H)$.
\end{restatable*}

The above theorem would not be of much interest if there were no examples of nonisomorphic back-and-forth equivalent groups.  By Borel complexity considerations, we show that there are uncountably many pairs of countable ICC groups $G$ and $H$ such that $G$ and $H$ are not isomorphic but are back-and-forth equivalent; moreover, we can find such pairs where $G$ and $H$ are both inner amenable and nonamenable, and we can also find such pairs where $G$ and $H$ are both non-inner amenable (whence the corresponding group von Neumann algebras do not have property Gamma).  
 
 One slightly unsatisfactory point is that we are unable to show that the group von Neumann algebras associated to the groups from the previous paragraph are nonisomorphic.  In fact, it is a priori possible that there is some countably infinite ordinal $\alpha$ such that whenever $G\baf_\alpha H$, then $L(G)\cong L(H)$.  Nevertheless, there is an alternate construction of back-and-forth equivalent groups that are actually special linear groups over back-and-forth equivalent rings for which we suspect the associated group von Neumann algebras should be nonisomorphic.  A detailed discussion of these matters occurs after the main theorem.

 We now outline the contents of the paper.  In Section 2, we give some background on both the standard EF games as well as the back-and-forth games described above.  This section also contains the aforementioned results on back-and-forth equivalent groups as well as some preservation properties of back-and-forth equivalence for groups.  The section concludes with a discussion of the back-and-forth relations in continuous logic.  Section 3 presents the basic facts needed about group von Neumann algebras and establishes some key lemmas about how back-and-forth equivalence in groups leads to some consequences about elements in group von Neumann algebras.  Section 4 contains the proof of the main preservation result and includes a discussion of the consequences of the result; the section also contains an analogous preservation result for reduced group \cstar-algebras.  
 Section 5 contains some open questions that arose during this project.

 In Section 6, we sketch an extension of the main result to crossed product algebras associated to Bernoulli actions:

\begin{restatable*}{thm}{generalizedmain}
\label{generalizedmain}
For any groups $G$ and $H$ such that $G\equiv_{1+\alpha}H$ and any tracial von Neumann algebra $M$, we have that $M^{\otimes G}\rtimes G\equiv_\alpha M^{\otimes H}\rtimes H$.
\end{restatable*}

 Unlike the group von Neumann algebra case, under certain circumstances, utilizing some deformation rigidity results of Popa, one can definitively say that the back-and-forth equivalent von Neumann algebras are in fact nonisomorphic. 
 
 %Section 5 sketches an extension of the main result to crossed product algebras associated to Bernoulli actions; unlike the group von Neumann algebra case, under certain circumstances, utilizing some deformation rigidity results of Popa, one can definitively say that the back-and-forth equivalent von Neumann algebras are in fact nonisomorphic.  The final section contains some open questions that arose during this project.
	
	We assume that the reader is familiar with basic first-order logic, both classical and continuous, and some von Neumann algebra basics.  The reader looking for background in continuous logic (as it pertains to operator algebras) or basic von Neumann algebra theory can consult the introductory articles \cite{hartintro} and \cite{ioana} respectively.  
	
  We thank Jennifer Pi for many useful comments on an earlier draft of this paper.
  
	\section{Games} \label{sec:backandforth}
	
	\subsection{Ehrenfeucht-Fra\"isse games}\label{subEF}
	
	Throughout this subsection, we fix a language $L$ and $L$-structures $M$ and $N$.  We begin by assuming that $L$ is a classical language.  We then move on to the case of a continuous language. 
	
	\begin{defn}
		Given a finite set $\Phi:=\{\varphi_1(\vec x),\ldots,\varphi_m(\vec x)\}$ of atomic $L$-formulae in the variables $\vec x=(x_1,\ldots,x_n)$, the \textbf{Ehrenfeucht-Fra\"isse (EF) game} $\G_{EF}(M,N,\Phi)$ is the two player game of $n$ rounds such that, in round $i$, player I plays either $a_i\in M$ or $b_i\in N$ and then player II responds with either $b_i\in N$ or $a_i\in M$.  At the end of the game, the players have created tuples $\vec a=(a_1,\ldots,a_n)$ and $\vec b=(b_1,\ldots,b_n)$ from $M$ and $N$ respectively.  Player II \textbf{wins} this play of $\G_{EF}(M,N,\Phi)$ if and only if:  $M\models \varphi_j(\vec a)\Leftrightarrow N\models \varphi_j(\vec b)$ for all $j=1,\ldots,m$.
	\end{defn}
	
	Recall that $M$ and $N$ are \textbf{elementarily equivalent}, denoted $M\equiv N$, if, for all $L$-sentences $\sigma$, we have: $M\models \sigma$ if and only if $N\models \sigma$.
	
	\begin{fact}\label{EFfact}
		$M$ and $N$ are elementarily equivalent if and only if player II has a winning strategy in $\G_{EF}(M,N,\Phi)$ for all finite sets $\Phi$ of atomic $L$-formulae.
	\end{fact}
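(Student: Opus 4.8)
The plan is to run the standard Ehrenfeucht--Fra\"isse argument inside the finite sublanguage generated by $\Phi$. Fix a finite set $\Phi=\{\varphi_1(\vec x),\ldots,\varphi_m(\vec x)\}$ of atomic formulae in $\vec x=(x_1,\ldots,x_n)$, and let $\mathcal L_\Phi$ be the collection of formulae with free variables among $x_1,\ldots,x_n$ built from $\varphi_1,\ldots,\varphi_m$ using $\neg,\wedge,\vee$ and the quantifiers $\exists x_i,\forall x_i$ for $1\le i\le n$; for $r\ge 0$, let $\mathcal L_\Phi^r$ be the sub-collection of those of quantifier rank at most $r$. The first, routine, observation (proved by induction on $r$) is that each $\mathcal L_\Phi^r$ contains only finitely many formulae up to logical equivalence: a formula of rank at most $r+1$ is a Boolean combination of formulae each of which is either atomic or of the form $Q x_i\,\theta$ with $\theta$ of rank at most $r$, and the Boolean closure of a set that is finite up to equivalence is again finite up to equivalence. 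Consequently, for a tuple $\vec c$ of length $k\le n$ from a structure we may speak of \emph{the} $\mathcal L_\Phi^r$-type of $\vec c$: the (up to equivalence, finite) conjunction of those members of $\mathcal L_\Phi^r$ with free variables among $x_1,\ldots,x_k$ that hold of $\vec c$, together with the negations of those that fail.

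For the implication ``$M\equiv N$ implies player II wins $\G_{EF}(M,N,\Phi)$'', I would have player II maintain the invariant that after round $k$ the tuples $\vec a_{\le k}$ and $\vec b_{\le k}$ satisfy exactly the same members of $\mathcal L_\Phi^{\,n-k}$ that have free variables among $x_1,\ldots,x_k$. The base case $k=0$ says that $M$ and $N$ agree on the sentences of $\mathcal L_\Phi^{\,n}$ (of which there are finitely many up to equivalence), which is immediate from $M\equiv N$. For the inductive step, assume the invariant holds after round $k$ and that player I plays $a_{k+1}\in M$ (a move in $N$ being symmetric), and let $\tau(x_1,\ldots,x_{k+1})$ be the $\mathcal L_\Phi^{\,n-k-1}$-type of $(\vec a_{\le k},a_{k+1})$. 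Then $\exists x_{k+1}\,\tau$ has quantifier rank at most $n-k$ and free variables among $x_1,\ldots,x_k$, so it falls under the stage-$k$ invariant; since $M\models\exists x_{k+1}\,\tau(\vec a_{\le k},x_{k+1})$, we get $N\models\exists x_{k+1}\,\tau(\vec b_{\le k},x_{k+1})$, and player II answers with a witness $b_{k+1}$. Now $(\vec a_{\le k},a_{k+1})$ and $(\vec b_{\le k},b_{k+1})$ satisfy the same members of $\mathcal L_\Phi^{\,n-k-1}$ with free variables among $x_1,\ldots,x_{k+1}$ (that is precisely what it means for $N$ to satisfy the complete type $\tau$), so the invariant persists. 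After round $n$ the invariant says that $\vec a$ and $\vec b$ agree on every member of $\mathcal L_\Phi^{\,0}$, i.e.\ on every Boolean combination of $\Phi$, in particular on each $\varphi_j$; hence player II wins.

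For the converse I argue contrapositively. Suppose $M\models\sigma$ but $N\not\models\sigma$ for some sentence $\sigma$; put $\sigma$ in prenex form $Q_1x_1\cdots Q_px_p\,\psi(\vec x)$ with $\psi$ quantifier-free, and let $\Phi$ consist of the atomic subformulae of $\psi$, a finite set of atomic formulae in $\vec x=(x_1,\ldots,x_p)$, so $\G_{EF}(M,N,\Phi)$ has $p$ rounds. Player I plays so as to maintain the invariant that after round $k$ one has $M\models Q_{k+1}x_{k+1}\cdots Q_px_p\,\psi(\vec a_{\le k},\ldots)$ and $N\models\overline{Q_{k+1}}x_{k+1}\cdots\overline{Q_p}x_p\,\neg\psi(\vec b_{\le k},\ldots)$, where $\overline{Q}$ denotes the dual quantifier: at a round whose $M$-side quantifier is $\exists$, player I plays a witnessing $a_{k+1}\in M$ and any response $b_{k+1}\in N$ is absorbed by the matching $\forall$ on the $N$-side; at a round whose $M$-side quantifier is $\forall$, player I instead plays a witnessing $b_{k+1}\in N$ and any response in $M$ is absorbed. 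After round $p$ we have $M\models\psi(\vec a)$ and $N\models\neg\psi(\vec b)$, and since $\psi$ is a Boolean combination of $\Phi$ the tuples must disagree on some $\varphi_j\in\Phi$; so player II loses this play, player I has a winning strategy in $\G_{EF}(M,N,\Phi)$, and therefore if player II wins every such game then $M\equiv N$.

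Both directions are essentially bookkeeping; the one place that needs care is the choice of the fragments $\mathcal L_\Phi^r$, which must be simultaneously finite up to logical equivalence and ``closed enough'' that $\exists x_{k+1}$ applied to a complete $\mathcal L_\Phi^{\,n-k-1}$-type lands back in $\mathcal L_\Phi^{\,n-k}$. Indexing the fragments by quantifier rank (rather than by ``tail'' quantifier prefixes $Q_{k+1}x_{k+1}\cdots Q_nx_n$) makes both requirements automatic, since $\exists$ does not distribute over the Boolean connectives but does raise quantifier rank by exactly one; this is the only real obstacle, and everything else is the standard manipulation.
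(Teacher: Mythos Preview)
Your proof is correct and is the standard argument; there is nothing to compare it against in the paper, which states this as a ``Fact'' without proof and treats it as well-known background. Your handling of both directions---maintaining decreasing quantifier-rank equivalence for the forward implication, and having player I follow the quantifier prefix of a distinguishing prenex sentence for the converse---is exactly the classical Ehrenfeucht--Fra\"isse argument. One tiny quibble: in the converse you take $\Phi$ to be the atomic subformulae of $\psi$ and assert the game has $p$ rounds, but if some $x_i$ does not occur in any atomic subformula of $\psi$ then $\Phi$ is a set of formulae in fewer than $p$ variables; this is harmless (eliminate vacuous quantifiers first, or pad $\Phi$ with $x_i=x_i$), but worth a word.
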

	
	In the case of continuous logic, one has a similar definition of Ehrenfeucht-Fra\"isse game, the only difference being that the game carries an extra parameter, namely some $\epsilon>0$, and then the winning condition for player II becomes $|\varphi_j^M(\vec a)-\varphi_j^N(\vec b)|<\epsilon$ for all $j=1,\ldots,m$.  With this definition, the analogous fact becomes:  $M$ and $N$ are elementarily equivalent if and only if player II has a winning strategy in $\mathfrak G_{EF}(M,N,\Phi,\epsilon)$ for all finite sets $\Phi$ of atomic $L$-formulae and all $\epsilon>0$; see \cite[Exercise 5.13]{hartintro}.
	
	In either the classical or continuous versions of the game, it is implicit that if the structures involved are many-sorted, then the play of the $i^{\text{th}}$ round, which corresponds to the variable $x_i$, must belong to the sort to which $x_i$ is associated.  This is relevant when we play games between tracial von Neumann algebras, which are usually viewed as many-sorted structures with sorts corresponding to operator norm balls of various radii.

    There is also an infinite version of the classical EF game, as used to show that the theory of dense linear orders is countably categorical. Rather than playing for $n$ rounds, the players play for infinitely many rounds. If the players play $(a_1,a_2,\ldots)$ and $(b_1,b_2,\ldots)$ from $M$ and $N$ respectively, then the second player Duplicator \textbf{wins} this play of the game if $M \models \varphi(a_1,\ldots,a_n) \Leftrightarrow N \models \varphi(a_1,\ldots,a_n)$ for all $n$ and all atomic $L$-formulas $\varphi(x_1,\ldots,x_n)$. If Duplicator has a winning strategy for the infinite version of the EF game between $M$ and $N$, then we write $M \equiv_{\infty,\omega} N$. For countable structures, the standard back-and-forth argument implies that $M \equiv_{\infty,\omega} N$ if and only if $M \cong N$. The definition of $\equiv_{\infty,\omega}$ for continuous logic will be given at the end of Section 4.
 
	\subsection{Back-and-forth games}\label{backandforth}
	
	% Comment: So there are maybe 8-16 different forms of the back-and-forth game. This "mega" game is a pretty standard symmetric game (meaning that at each play, player I can play from either side). Some of the other choices are how many elements can be played at a time (a single element as in the model-theoretic game, or any tuple) and what to do at the 0 level (either the elements have to have the same quantifier-free type, or they have to have the same quantifier-free type for the first k formulas, where k is the number of elements that have been played).
	
	% So it would be pretty easy to write the whole thing using what you call the mega game as your $\equiv_\omega$. In particular, it is well-known that $A \equiv_\omega B$ if and only if they satisfy the same $\Sigma_n$ formulas for all $n$, which I think is really all that you care about later.
	
	\subsubsection{Standard back-and-forth equivalence in classical logic}
	
	%It turns out that the notion of mega equivalence has essentially appeared in the computability literature under the name of standard back and forth equivalence (which is unfortunate as this notion is  different from the model theorist's use of the term back and forth equivalent, as we will show later).  We now explain this connection.  Our reference for the standard back and forth relations is \cite[Chapter 15]{ashknight}.
	
	There are a number of variants of the back-and-forth games used in computable structure theory, but the two most important define what are often called the standard symmetric and standard asymmetric back-and-forth relations. We will generally work with the standard symmetric back-and-forth relations because they are most similar to the standard model-theoretic EF games.

    To motivate these relations, recall that the relation $M \equiv_{\infty,\omega} N$ holds if the back-and-forth game can be played between $M$ and $N$ for infinitely many rounds. If Duplicator cannot keep playing for infinitely many rounds, it is still possible that they can play for several rounds without losing. The back-and-forth relations measure the ordinal length of time that Duplicator can avoid losing the back-and-forth game.

    It is standard in model theory to have players play single elements at a time. For the back-and-forth game of infinite length used to define $\equiv_{\infty,\omega}$, there is no difference whether we allow players to play tuples or restrict them to single elements. It is convenient for many purposes in infinitary logic and in computable structure theory to have the players play tuples, and as described in the Introduction, it is the fact that the back-and-forth relations use tuples that allows our main theorem to go through.
 
	\begin{defn}\label{defn:sym-bf}
		Given $L$-structures $M$ and $N$, finite tuples $\vec a$ and $\vec b$ from $M$ and $N$ respectively of the same length, and an ordinal $\alpha$, we define the \textbf{standard symmetric back-and-forth relations} $(M,\vec a)\baf_\alpha (N,\vec b)$ by recursion on $\alpha$:
		\begin{itemize}
			\item $(M,\vec a)\baf_0 (N,\vec b)$ if and only if, whenever $\varphi(\vec x)$ is a quantifier-free $L$-formula, we have $M\models \varphi(\vec a) \Leftrightarrow N\models \varphi(\vec b)$ (or, in other words, the quantifier-free types of $\vec a$ in $M$ and $\vec b$ in $N$ are the same).
			\item If $\alpha > 0$, we have that $(M,\vec a)\baf_\alpha (N,\vec b)$ if and only if:
			\begin{itemize}
                \item for every ordinal $\beta<\alpha$ and every finite tuple $\vec c$ from $M$, there is a finite tuple $\vec d$ from $N$ such that $(M,\vec a,\vec c) \equiv_\beta (N,\vec b, \vec d)$.
				\item for every ordinal $\beta<\alpha$ and every finite tuple $\vec d$ from $N$, there is a finite tuple $\vec c$ from $M$ such that $(M,\vec a,\vec c) \equiv_\beta (N,\vec b, \vec d)$.
			
			\end{itemize}
		\end{itemize}
		We write $M \baf_\alpha N$ if $(M,\varnothing) \baf_\alpha (N,\varnothing)$.
	\end{defn}

\begin{remark}
    Taking $\alpha = \infty$ in the above definition, adjusting it appropriately, and using $\infty < \infty$, we can consider $\equiv_{\infty,\omega}$ to be the $\infty$-back-and-forth relation $\baf_\infty$. We then have $M \equiv_{\infty,\omega} N$ if and only if $M \baf_\alpha N$ for all $\alpha$.\footnote{For each tuple $\vec a \in M$ and $\vec b \in N$, there is at most one $\alpha$ such that $(M,\vec a) \baf_\gamma (N,\vec b)$ for all $\gamma < \alpha$ but $(M,\vec a) \nbaf_\alpha (N,\vec b)$. Thus, by counting, there is some $\alpha$ such that, for any tuples $\vec a\in M$ and $\vec b\in N$, if $(M,\vec a) \baf_\alpha (N,\vec b)$, then $(M,\vec a) \baf_{\alpha+1} (N,\vec b)$. Then we can argue that the second player Duplicator can win the back-and-forth game by always ensuring that the tuples $\vec a \in M$ and $\vec b \in N$ being played satisfy $(M,\vec a) \baf_\alpha (N,\vec b)$. Given such $\vec a$ and $\vec b$ played so far, we actually have that $(M,\vec a) \baf_{\alpha+1} (N,\vec b)$. If the first player Spoiler plays $\vec c \in M$, then there is $\vec d \in N$ with $(M,\vec a \vec v) \baf_{\alpha} (N,\vec b \vec d)$. But then $(M,\vec a \vec v) \baf_{\alpha+1} (N,\vec b \vec d)$, and so on.}
\end{remark}

 \begin{remark}
In the many-sorted setting, in defining the relation $(M,\vec a)\baf_\alpha (N,\vec b)$, in addition to assuming that $\vec a$ and $\vec b$ are tuples of the same length, we also assume that the corresponding entries in each tuple belong to the same sort.
 \end{remark}
	
    It is clear that if $M$ is isomorphic to $N$, then $M\baf_\alpha N$ for all ordinals $\alpha$. These relations were first introduced by Scott \cite{Scott}, whose showed that if $M$ and $N$ are countable, then the converse is true:
	
	\begin{thm}[Scott \cite{Scott}]\label{scott}
		If $M$ and $N$ are countable, then the following are equivalent:
  \begin{enumerate}
      \item $M\baf_\alpha N$ for all ordinals $\alpha < \omega_1$.
      \item $M \equiv_{\infty,\omega} N$. 
      \item $M \cong N$.
  \end{enumerate}
    \end{thm}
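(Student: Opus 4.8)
The plan is to prove the cycle of implications $(3)\Rightarrow(1)\Rightarrow(2)\Rightarrow(3)$; countability is not needed for $(3)\Rightarrow(1)$ but is essential for the other two. The implication $(3)\Rightarrow(1)$ is the observation recorded just above the theorem: given an isomorphism $f\colon M\to N$, Duplicator plays ``reply to a tuple $\vec c$ from $M$ by $f(\vec c)$, and to a tuple $\vec d$ from $N$ by $f^{-1}(\vec d)$,'' keeping the structures with the played constants adjoined isomorphic; since isomorphic structures have equal quantifier-free types and the back-and-forth clauses transfer along isomorphisms, an induction on $\alpha$ yields $(M,\vec a)\baf_\alpha(N,\vec b)$ throughout, so $M\baf_\alpha N$ for every $\alpha$.

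For $(1)\Rightarrow(2)$ I would carry out the counting argument indicated in the footnote following Definition~\ref{defn:sym-bf}. First record the monotonicity $(M,\vec a)\baf_\beta(N,\vec b)\Rightarrow(M,\vec a)\baf_\alpha(N,\vec b)$ for $\alpha\le\beta$, an immediate induction from the definition. Next, claim there is a countable ordinal $\alpha^*$ with the stabilization property $(\star)$: for all finite tuples $\vec a\in M$ and $\vec b\in N$ of equal length, $(M,\vec a)\baf_{\alpha^*}(N,\vec b)$ implies $(M,\vec a)\baf_{\alpha^*+1}(N,\vec b)$. If no countable $\alpha$ had this property, then for each $\alpha<\omega_1$ one could pick a pair $(\vec a_\alpha,\vec b_\alpha)$ with $(M,\vec a_\alpha)\baf_\alpha(N,\vec b_\alpha)$ but $(M,\vec a_\alpha)\nbaf_{\alpha+1}(N,\vec b_\alpha)$; since $M$ and $N$ are countable there are only countably many such pairs, so some fixed pair $(\vec a,\vec b)$ is selected for an uncountable, hence unbounded, set $A\subseteq\omega_1$ of indices, whereupon monotonicity together with the unboundedness of $A$ gives $(M,\vec a)\baf_\beta(N,\vec b)$ for all $\beta<\omega_1$ — contradicting $(M,\vec a)\nbaf_{\beta+1}(N,\vec b)$ for $\beta\in A$, as $\beta+1<\omega_1$. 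Fixing such an $\alpha^*$, hypothesis $(1)$ gives $(M,\varnothing)\baf_{\alpha^*}(N,\varnothing)$, hence $(M,\varnothing)\baf_{\alpha^*+1}(N,\varnothing)$ by $(\star)$; Duplicator then wins the infinite EF game by maintaining the invariant that the tuples $\vec a,\vec b$ played so far satisfy $(M,\vec a)\baf_{\alpha^*}(N,\vec b)$, using $(\star)$ to upgrade this to $\baf_{\alpha^*+1}$ and then the $\beta=\alpha^*$ clause of that relation to answer Spoiler's move on either side and restore the invariant (in particular keeping the quantifier-free types equal at every finite stage). Hence $M\equiv_{\infty,\omega}N$.

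For $(2)\Rightarrow(3)$, run the classical back-and-forth/Cantor argument. Fix enumerations $M=\{m_0,m_1,\dots\}$ and $N=\{n_0,n_1,\dots\}$ and a winning strategy for Duplicator in the infinite EF game, and let a hypothetical Spoiler exhaust both structures — at even rounds playing the first element of $M$ not yet chosen, at odd rounds the first such element of $N$ — with Duplicator replying by the strategy. This produces sequences $(a_k)_{k<\omega}$ and $(b_k)_{k<\omega}$ enumerating $M$ and $N$ with $M\models\varphi(a_{k_1},\dots,a_{k_r})\Leftrightarrow N\models\varphi(b_{k_1},\dots,b_{k_r})$ for every atomic $\varphi$ (adjoining dummy variables reduces to atomic formulas on an initial segment of variables, which is exactly what the winning condition controls). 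The atomic formula $x_i=x_j$ then shows $a_k\mapsto b_k$ is a well-defined bijection $M\to N$, and the atomic formulas built from the function and relation symbols, together with their negations, show this bijection is an isomorphism.

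The first and last implications are routine; the real content, and the only genuine use of the hypotheses $\alpha<\omega_1$ and of countability, is the extraction of the single stabilization ordinal $\alpha^*$ in $(1)\Rightarrow(2)$. The points demanding care there are the monotonicity of $\baf_\alpha$ in $\alpha$ and the off-by-one when stepping from $\baf_{\alpha^*+1}$ back down to $\baf_{\alpha^*}$ to restore Duplicator's invariant; everything else is bookkeeping.
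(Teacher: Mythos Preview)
Your proof is correct and follows essentially the same route the paper indicates: the paper does not give a formal proof of this cited result of Scott, but the footnote after Definition~\ref{defn:sym-bf} sketches precisely your $(1)\Rightarrow(2)$ argument (counting pairs to obtain a stabilization ordinal, then maintaining $\baf_{\alpha^*}$ as an invariant in the infinite game), and the text just before the theorem notes that $(2)\Leftrightarrow(3)$ for countable structures is the ``standard back-and-forth argument,'' which you spell out. Your stabilization argument is phrased slightly differently from the footnote---you argue by contradiction via a repeated pair rather than directly observing that each pair has at most one failure ordinal---but the two are equivalent and both correct.
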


    Scott's theorem essentially says that if the second player Duplicator in the back-and-forth game can survive without losing for an arbitrary (ordinal) amount of time, then in fact they can win.
	
	Since we are discussing several different version of the EF/back-and-forth games, we would be remiss to not include the standard asymmetric back-and-forth relations for comparison.
	
	\begin{defn}
		Given $L$-structures $M$ and $N$, finite tuples $\vec a$ and $\vec b$ from $M$ and $N$ respectively of the same length, and an ordinal $\alpha\geq 1$, we define the \textbf{standard asymmetric back-and-forth relations} $(M,\vec a)\leq_\alpha (N,\vec b)$ by recursion on $\alpha$:
		\begin{itemize}
			\item $(M,\vec a)\leq_1 (N,\vec b)$ if and only if, whenever $\varphi(\vec x)$ is an existential $L$-formula for which $N\models \varphi(\vec b)$, we also have $M\models \varphi(\vec a)$ (or, in other words, the existential type of $\vec b$ in $N$ is contained in the existential type of $\vec a$ in $M$).
			\item If $\alpha>1$, we have that $(M,\vec a)\leq_\alpha (N,\vec b)$ if and only if, for every ordinal $1\leq \beta<\alpha$ and every finite tuple $\vec d$ from $N$, there is a finite tuple $\vec c$ from $M$ such that $(N,\vec b, \vec d)\leq_\beta (M,\vec a,\vec c)$.
		\end{itemize}
		We also set $M\leq_\alpha N$ to mean $(M,\emptyset)\leq_\alpha (N,\emptyset)$.  If one is working solely with the asymmetric back-and-forth relations, one defines $(M,\vec a)\baf_\alpha (N,\vec b)$ to mean $(M,\vec a)\leq_\alpha (N,\vec b)$ and $(N,\vec b)\leq_\alpha (M,\vec a)$; since we are primarily working with the symmetric relations, we will not do this.
	\end{defn}
	
	These asymmetric relations are the most popular back-and-forth relations in computable structure theory today, mostly related to their connection with the truth of $L_{\omega_1,\omega}$ formulae.  To explain this, we recall the hierarchy of $L_{\omega_1,\omega}$-formulae in normal form.  
	
	\begin{defn}
		For countable ordinals $\alpha$, we define the classes of $\Sigma_\alpha$ and $\Pi_\alpha$ $L$-formulae by recursion on $\alpha$:
		\begin{itemize}
			\item The $\Sigma_0$ and $\Pi_0$ $L$-formulae are just the finitary quantifier-free $L$-formulae.  
			\item For $\alpha>0$, a formula is $\Sigma_\alpha$ if it is a countable disjunction of formulae of the form $\exists v \psi$ with $\psi$ a $\Pi_\beta$ formula for some $\beta=\beta(\psi)<\alpha$, and with finitely many free variables in total, while a $\Pi_\alpha$ formula is a countable conjunction of formulae of the form $\forall v \psi$ with  $\psi$ a $\Sigma_\beta$ formula for some $\beta=\beta(\psi)<\alpha$, again with finitely many free variables in total.
		\end{itemize}
	\end{defn}
	
	The following is a theorem of Karp \cite{karp}:
	
	\begin{thm}\label{karp}
		For a countable ordinal $\alpha\geq 1$, the following are equivalent:
		\begin{enumerate}
			\item $(M,\vec a)\leq_\alpha (N,\vec b)$.
			\item The $\Sigma_\alpha$ formulae true of $\vec b$ in $N$ are also true of $\vec a$ in $M$. 
			\item The $\Pi_\alpha$ formulae true of $\vec a$ in $M$ are true of $\vec b$ in $N$. 
		\end{enumerate}
	\end{thm}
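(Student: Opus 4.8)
The natural approach is induction on $\alpha$, taking $\alpha=1$ as the base case and handling the successor and limit cases uniformly, since both the recursion defining $(M,\vec a)\leq_\alpha(N,\vec b)$ and the recursion defining the classes $\Sigma_\alpha,\Pi_\alpha$ refer to all $\beta<\alpha$ at once. Before starting I would record two routine closure facts about the hierarchy, each proved by an easy simultaneous induction: (i) for $\gamma<\beta$ one has $\Sigma_\gamma\cup\Pi_\gamma\subseteq\Sigma_\beta\cap\Pi_\beta$ up to logical equivalence, and (for $\beta\geq 1$) $\Sigma_\beta$ is closed under countable disjunctions and $\Pi_\beta$ under countable conjunctions; (ii) the negation of a $\Sigma_\alpha$ formula is equivalent to a $\Pi_\alpha$ formula and vice versa, by pushing negations through the countable connectives and the quantifiers. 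Granting (ii), the equivalence of (2) and (3) is purely formal and independent of the induction: if (2) holds and $\theta$ is a $\Pi_\alpha$ formula with $M\models\theta(\vec a)$, then $M\not\models\neg\theta(\vec a)$, and since $\neg\theta$ is equivalent to a $\Sigma_\alpha$ formula the contrapositive of (2) yields $N\not\models\neg\theta(\vec b)$, i.e.\ $N\models\theta(\vec b)$; the converse is symmetric. Thus all the content is in $(1)\Leftrightarrow(2)$. The base case is immediate: $(M,\vec a)\leq_1(N,\vec b)$ says precisely that every existential formula true of $\vec b$ in $N$ is true of $\vec a$ in $M$, and since a $\Sigma_1$ formula is a countable disjunction of existential formulas and a structure models a disjunction iff it models one of its disjuncts, this is precisely clause (2) for $\alpha=1$.

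For the inductive step, fix $\alpha>1$ and assume the full statement for all $1\leq\beta<\alpha$. For $(1)\Rightarrow(2)$: suppose $(M,\vec a)\leq_\alpha(N,\vec b)$ and let $\varphi=\bigvee_i\exists\vec v_i\,\psi_i$ be a $\Sigma_\alpha$ formula with $N\models\varphi(\vec b)$, where $\psi_i$ is $\Pi_{\beta_i}$ and $\beta_i<\alpha$. Choose $i_0$ and a tuple $\vec d$ from $N$ with $N\models\psi_{i_0}(\vec b,\vec d)$, and put $\beta:=\max(1,\beta_{i_0})$, so $1\leq\beta<\alpha$ and, by (i), $\psi_{i_0}$ is $\Pi_\beta$ up to equivalence. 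The definition of $\leq_\alpha$, applied to $\beta$ and $\vec d$, gives a tuple $\vec c$ from $M$ with $(N,\vec b,\vec d)\leq_\beta(M,\vec a,\vec c)$, and the inductive hypothesis (implication $(1)\Rightarrow(3)$ for the pair $(N,\vec b,\vec d)$ and $(M,\vec a,\vec c)$) then says every $\Pi_\beta$ formula true of $(\vec b,\vec d)$ in $N$ holds of $(\vec a,\vec c)$ in $M$; in particular $M\models\psi_{i_0}(\vec a,\vec c)$, whence $M\models\exists\vec v_{i_0}\psi_{i_0}(\vec a)$ and so $M\models\varphi(\vec a)$.

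For $(2)\Rightarrow(1)$, assume (2) — equivalently (3) — and suppose toward a contradiction that $(M,\vec a)\not\leq_\alpha(N,\vec b)$. Then there are $1\leq\beta<\alpha$ and a tuple $\vec d$ from $N$, of some length $n$, with $(N,\vec b,\vec d)\not\leq_\beta(M,\vec a,\vec c)$ for every length-$n$ tuple $\vec c$ from $M$; by the inductive hypothesis (the contrapositive of $(3)\Rightarrow(1)$), for each such $\vec c$ there is a $\Pi_\beta$ formula $\theta_{\vec c}(\vec x,\vec y)$ with $N\models\theta_{\vec c}(\vec b,\vec d)$ but $M\not\models\theta_{\vec c}(\vec a,\vec c)$. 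Now set
\[
\Theta(\vec x)\;:=\;\forall\vec y\ \bigvee_{\vec c\in M^n}\neg\theta_{\vec c}(\vec x,\vec y).
\]
Each $\neg\theta_{\vec c}$ is equivalent to a $\Sigma_\beta$ formula, so by (i) the disjunction is $\Sigma_\beta$ and $\Theta$ is $\Pi_{\beta+1}$, hence $\Pi_\alpha$ as $\beta+1\leq\alpha$. Now $M\models\Theta(\vec a)$, because for any length-$n$ tuple $\vec c$ from $M$, instantiating $\vec y:=\vec c$ makes $\neg\theta_{\vec c}(\vec a,\vec c)$ true and the disjunction holds; but $N\not\models\Theta(\vec b)$, because instantiating $\vec y:=\vec d$ we have $N\models\theta_{\vec c}(\vec b,\vec d)$, hence $N\not\models\neg\theta_{\vec c}(\vec b,\vec d)$, for every $\vec c$, so the disjunction fails at $\vec d$. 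This contradicts clause (3), completing the induction.

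The argument is a clean transfinite induction with no deep obstacle; the point I would watch most carefully is the legitimacy of the diagonal formula $\Theta$, whose disjunction is indexed by the length-$n$ tuples of $M$. This is a countable index set — and $\Theta$ therefore lies in $L_{\omega_1,\omega}$ — whenever $M$ is countable, which covers the structures of interest here; for arbitrary structures one should instead read the $\Sigma_\alpha/\Pi_\alpha$ hierarchy inside $L_{\infty,\omega}$, allowing set-sized disjunctions, after which the same proof goes through verbatim (this is the generality in which Karp originally worked). A secondary point requiring care is keeping $M$ and $N$ in the correct roles, and invoking the $\Sigma$- versus $\Pi$-form of the inductive hypothesis as appropriate, in the two implications — but this is bookkeeping rather than a genuine difficulty.
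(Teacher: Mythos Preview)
The paper does not prove this theorem at all: it is stated as ``a theorem of Karp \cite{karp}'' and simply cited without proof. So there is no paper proof to compare against.

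Your argument is the standard one and is correct. The induction is set up properly, the role-swapping in the asymmetric relation is handled carefully in both directions, and the diagonal witness formula $\Theta$ does exactly what is needed. Your closing caveat is also well taken and worth emphasizing: as the theorem is stated in the paper, $M$ and $N$ are arbitrary $L$-structures (only $\alpha$ is assumed countable), so the disjunction over $M^n$ in $\Theta$ need not be countable; the clean fix, as you say, is to read the $\Sigma_\alpha/\Pi_\alpha$ hierarchy inside $L_{\infty,\omega}$ (which is Karp's original setting), after which your proof goes through without change. Within the paper's context---where the structures of interest are countable groups---the countable version already suffices.
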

	
	As stated above, for the purposes of this paper, we will use the symmetric back-and-forth relations because they most resemble the model-theoretic EF games. In any case, the symmetric and asymmetric relations are not too different from each other.  In fact:
	
	\begin{remark}\label{allthesame}
		The following relationships hold between the different back-and-forth relations:
		\begin{itemize}
			\item If $M \baf_\omega N$, then $M\equiv N$.
			\item If $M \baf_\alpha N$, then $M \leq_\alpha N$ and $N \leq_\alpha M$.
			\item If $M \leq_{1+2 \cdot \alpha} N$, then $M \baf_\alpha N$.
			\item If $\lambda$ is a limit ordinal, then $M \baf_\lambda N$ if and only if $M \leq_\lambda N$ if and only if $N \leq_\lambda M$.
		\end{itemize}
		These facts are all straightforward to prove by showing that one game can be simulated within another. For example, to show that $M \leq_{1+2 \cdot \alpha} N$ implies that $M \baf_\alpha N$, one must simulate the symmetric game of length $\alpha$ using the asymmetric game of length $1+2 \cdot \alpha$. In one round of the symmetric game, Spoiler plays a tuple $\vec{a}$ in either $M$ or $N$. This can be simulated in two rounds of the asymmetric game by having Spoiler either (if $\vec{a}\in M$) play the empty tuple in $N$ followed by $\vec{a}$ in $M$ or (if $\vec{a} \in N$) play $\vec{a}$ in $N$ followed by the empty tuple in $M$. We need one extra round to match up the base cases.
	\end{remark}
	
	In particular, $M \baf_\omega N$ if and only if $M \leq_\omega N$ if and only if $N \leq_\omega M$, and all of these imply that $M \equiv N$. The converse of this latter statement is false in general, as we will see in Proposition \ref{prop:amenable-bf} in a form relevant to this paper (though there are many other examples).
	
	We note also that there are infinite versions of each game, for example, as used to show that any two countable dense linear orders are isomorphic. These infinite games are all essentially the same, whether one chooses to play symmetrically or asymmetrically, and with single elements or tuples.
	
	While the asymmetric relations are required to match up exactly with infinitary logic, combining Theorem \ref{karp} and  Remark \ref{allthesame} we get a one-way implication:
	
	\begin{thm}\label{thm:transfer}
		If $(M,\vec a) \baf_\alpha (N,\vec b)$, then any $\Sigma_\alpha$ or $\Pi_\alpha$ formula true of $\vec a$ in $M$ is true of $\vec b$ in $N$, and vice versa. 
	\end{thm}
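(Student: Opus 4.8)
The plan is to derive this immediately from Karp's theorem (Theorem~\ref{karp}) together with the comparison between the symmetric and asymmetric back-and-forth relations recorded in Remark~\ref{allthesame}. First, I would dispose of the degenerate case $\alpha = 0$: the $\Sigma_0$ and $\Pi_0$ formulae are precisely the finitary quantifier-free $L$-formulae, and $(M,\vec a)\baf_0 (N,\vec b)$ says by definition that $\vec a$ in $M$ and $\vec b$ in $N$ have the same quantifier-free type, so there is nothing to prove. For the rest of the argument I assume $\alpha\geq 1$ (and $\alpha$ countable, so that the classes $\Sigma_\alpha$, $\Pi_\alpha$ are defined).

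Next I would invoke the parameterized version of the second item of Remark~\ref{allthesame}: from $(M,\vec a)\baf_\alpha (N,\vec b)$ one obtains both $(M,\vec a)\leq_\alpha (N,\vec b)$ and $(N,\vec b)\leq_\alpha (M,\vec a)$. This is the simulation argument already sketched in that remark, with the fixed tuples $\vec a$ and $\vec b$ simply carried along as parameters throughout; since neither Spoiler nor Duplicator ever disturbs these coordinates during the simulation, Duplicator's strategy in the symmetric game with the extra parameters translates into a strategy in each asymmetric game.

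Then I would apply Karp's theorem to each of the two asymmetric relations. Applying the equivalence (1)$\Leftrightarrow$(2)$\Leftrightarrow$(3) of Theorem~\ref{karp} to $(M,\vec a)\leq_\alpha (N,\vec b)$ gives that every $\Sigma_\alpha$ formula true of $\vec b$ in $N$ is true of $\vec a$ in $M$, and that every $\Pi_\alpha$ formula true of $\vec a$ in $M$ is true of $\vec b$ in $N$. Applying it to $(N,\vec b)\leq_\alpha (M,\vec a)$ gives, symmetrically, that every $\Sigma_\alpha$ formula true of $\vec a$ in $M$ is true of $\vec b$ in $N$, and every $\Pi_\alpha$ formula true of $\vec b$ in $N$ is true of $\vec a$ in $M$. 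Conjoining these four inclusions yields exactly that $\vec a$ in $M$ and $\vec b$ in $N$ satisfy the same $\Sigma_\alpha$ formulae and the same $\Pi_\alpha$ formulae, which is the assertion of the theorem.

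There is no real obstacle in this proof; it is pure bookkeeping on top of Karp's theorem. The only point that deserves a sentence of care is the verification that the symmetric-to-asymmetric simulation of Remark~\ref{allthesame} is compatible with fixed parameters, and (if one wants full generality in $\alpha$) the observation that the statement is only meaningful for those $\alpha$ for which $\Sigma_\alpha$ and $\Pi_\alpha$ are defined, i.e.\ countable $\alpha$, which is the setting of Theorem~\ref{karp} anyway.
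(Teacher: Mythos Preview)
Your proposal is correct and follows exactly the approach the paper takes: the theorem is stated there as an immediate consequence of combining Theorem~\ref{karp} with Remark~\ref{allthesame}, and you have simply spelled out the bookkeeping (including the trivial $\alpha=0$ case and the observation that the simulation in Remark~\ref{allthesame} carries parameters).
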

	
	We conclude our discussion of general properties of $\baf_\alpha$ with the following two results, whose proofs are routine and left to the reader.\footnote{As far as we are aware these statements do not appear directly in print but they are well-known and straightforward. For example, they can both be viewed as applications of the Pull Back Theorem of Knight, Miller, and vanden Boom \cite{KMB} which covers a much more general case.}  Recall that $M^{eq}$ denotes the expansion of $M$ by adding imaginary sorts.
	
	\begin{lem}
		If $M\baf_\alpha N$, then $M^{eq}\baf_\alpha N^{eq}$.
	\end{lem}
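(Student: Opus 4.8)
The plan is to turn the data witnessing $M\baf_\alpha N$ into data witnessing $M^{eq}\baf_\alpha N^{eq}$, by simulating the back-and-forth game between $M$ and $N$ inside the one between $M^{eq}$ and $N^{eq}$. Recall that each sort of $M^{eq}$ has the form $S_E=M^n/E$ for a $\varnothing$-definable equivalence relation $E$ on $M^n$, that every element of $S_E$ is $\pi_E(\vec a)$ for some home-sort \emph{representative} $\vec a\in M^n$, and that the only atomic $L^{eq}$-formulas not inherited from $L$ are equalities of imaginaries in a common sort and equalities of the form $\pi_E(\vec t)=y$; on representatives these assert exactly that $E$ holds.

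Duplicator's strategy is the evident one. When Spoiler plays a tuple $\vec e$ from $M^{eq}$ --- a mixture of home-sort elements and imaginaries --- Duplicator keeps the home-sort entries and, for each imaginary entry, picks a home-sort representative, reusing a previously fixed representative whenever the same imaginary element has occurred before (so that equal imaginaries get equal representatives). Writing $\vec c$ for the resulting home-sort tuple, Duplicator feeds $\vec c$ to the simulated $\baf$-strategy between $M$ and $N$, obtains a response $\vec d$ from $N$, and plays in $N^{eq}$ the tuple obtained from $\vec d$ by applying the projection maps in the pattern dictated by $\vec e$. The case of Spoiler playing in $N^{eq}$ is symmetric.

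What has to be verified (following the recursion on $\alpha$ of Definition \ref{defn:sym-bf}) is that the running pair of tuples always has the same quantifier-free $L^{eq}$-type. Agreement on $L$-atomic formulas among the home-sort entries and on equalities of representatives is immediate from the simulated game. The remaining atomic facts --- whether $\pi_E(\vec t)=\pi_E(\vec s)$ for home-sort terms $\vec t,\vec s$ in the played variables --- unwind, via the defining formulas of the relevant equivalence relations, into ordinary first-order $L$-statements about $\vec c$ and $\vec d$; since only finitely many $E$ appear in any finite play and each is defined by a single first-order formula of some finite quantifier rank, it suffices to run the simulated $M$--$N$ game at a back-and-forth level high enough to decide those formulas. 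The main obstacle is precisely this accounting --- keeping enough back-and-forth budget in the $M$--$N$ game to decode the imaginary sorts as the game proceeds --- and it is nothing other than the instance, for the $\mathcal{L}_{\omega_1,\omega}$-interpretation of $M^{eq}$ in $M$ (whose domain is $M^{<\omega}$ modulo the $\varnothing$-definable equivalence relations), of the Pull Back Theorem of Knight, Miller, and vanden Boom \cite{KMB}; invoking that theorem is the cleanest way to conclude.
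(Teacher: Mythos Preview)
Your approach is essentially the paper's: the paper gives no explicit argument, calling the proof routine and remarking in a footnote that it (together with the companion lemma on interpretations) can be obtained as an instance of the Pull Back Theorem of Knight, Miller, and vanden Boom \cite{KMB}. You supply the natural simulation strategy the paper leaves implicit and then invoke the same theorem to handle the bookkeeping, so your treatment and the paper's coincide.
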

	
	\begin{lem}\label{intepretable}
		Suppose that $M$ and $N$ are $L_0$-structures that are each interpretable in $L_1$-structures $A$ and $B$ without parameters and via the same interpreting formulae.  If $A\baf_\alpha B$, then $M\baf_\alpha N$. 
	\end{lem}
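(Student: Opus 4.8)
The plan is to prove this by induction on $\alpha$, using the interpretation to translate back and forth between tuples of $M$ and tuples of $A$ (respectively $N$ and $B$) and between $L_0$-formulae and $L_1$-formulae. Fix the interpreting data, which is common to both sides: an integer $k$, an $L_1$-formula $\delta(\vec y)$ in $k$ free variables cutting out the domain, an $L_1$-formula $\varepsilon(\vec y_1,\vec y_2)$ defining the equivalence relation, and $L_1$-formulae defining the pullbacks of the atomic $L_0$-relations. Say that a tuple $\vec a\in A^{kn}$ \emph{codes} the tuple $\vec m=(m_1,\dots,m_n)\in M^n$ if, for each $i$, the $i$-th block of $k$ coordinates of $\vec a$ satisfies $\delta$ and represents $m_i$; that $\vec a$ codes \emph{some} tuple is expressed by an $L_1$-formula $\delta^{(n)}(\vec a)$, namely the conjunction over blocks of (renamed copies of) $\delta$, which has the same quantifier complexity as $\delta$. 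Composing the interpreting formulae in the usual way, to each $L_0$-formula $\varphi(\vec x)$ one attaches an $L_1$-formula $\varphi^\sharp(\vec y)$ so that $M\models\varphi(\vec m)\iff A\models\varphi^\sharp(\vec a)$ whenever $\vec a$ codes $\vec m$, and identically for $N$ inside $B$. Since quantifier-free $L_0$-formulae are Boolean combinations of atomic ones, there is a fixed finite ordinal $n^*$ such that $\delta^{(n)}$, $\varepsilon$, and $\varphi^\sharp$ for every quantifier-free $\varphi$ are all simultaneously $\Sigma_{n^*}$ and $\Pi_{n^*}$.

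With this in place I would prove, by induction on $\alpha$, the assertion: \emph{if $\vec a$ codes $\vec m$ in $M$, $\vec b$ codes $\vec n$ in $N$, and $(A,\vec a)\baf_{n^*+\alpha}(B,\vec b)$, then $(M,\vec m)\baf_\alpha(N,\vec n)$.} For $\alpha=0$: since $\delta^{(n)}$ holds of $\vec a$ and is $\Sigma_{n^*}$, Theorem \ref{thm:transfer} gives $B\models\delta^{(n)}(\vec b)$, so $\vec b$ really does code a tuple $\vec n$; and for every quantifier-free $\varphi$, applying Theorem \ref{thm:transfer} to the formula $\varphi^\sharp$ (which is both $\Sigma_{n^*}$ and $\Pi_{n^*}$) yields $M\models\varphi(\vec m)\iff A\models\varphi^\sharp(\vec a)\iff B\models\varphi^\sharp(\vec b)\iff N\models\varphi(\vec n)$, i.e. $(M,\vec m)\baf_0(N,\vec n)$. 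For $\alpha>0$: given $\beta<\alpha$ and a tuple $\vec m'$ from $M$, fix a coding tuple $\vec a'$ in $A$ for $\vec m'$; since $n^*+\beta<n^*+\alpha$, the hypothesis $(A,\vec a)\baf_{n^*+\alpha}(B,\vec b)$ furnishes a tuple $\vec b'$ with $(A,\vec a\,\vec a')\baf_{n^*+\beta}(B,\vec b\,\vec b')$. Transferring the coding formula for the concatenated tuple shows that $\vec b\,\vec b'$ codes a tuple, necessarily of the form $\vec n\,\vec n'$ since its initial blocks are exactly those of $\vec b$, and the induction hypothesis gives $(M,\vec m\,\vec m')\baf_\beta(N,\vec n\,\vec n')$. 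The symmetric argument handles a tuple played in $N$, so $(M,\vec m)\baf_\alpha(N,\vec n)$. Taking $\vec a=\vec b=\varnothing$ yields $A\baf_{n^*+\alpha}B\Rightarrow M\baf_\alpha N$.

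The one point to watch — and the reason the statement is cleanest for $\alpha\geq\omega$ — is this finite shift $n^*$, which is forced because a quantifier-free fact about $M$ corresponds to a fact about $A$ of bounded but possibly positive quantifier rank. For $\alpha\geq\omega$ one has $n^*+\alpha=\alpha$, so the shift is invisible, which is exactly the range needed for the applications and is consistent with the $1+\alpha$ appearing in Theorem \ref{main1}; and whenever the interpretation is quantifier-free (as for $\SL_n(R)$ inside $R$) one may take $n^*=0$ and the statement holds verbatim for every $\alpha$. I do not expect any genuine obstacle beyond this bookkeeping and the attendant ordinal arithmetic; alternatively, the entire lemma, shift included, is an instance of the Pull Back Theorem of \cite{KMB}, which one could simply cite.
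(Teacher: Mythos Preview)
The paper does not actually prove this lemma; it is declared routine and left to the reader, with a footnote pointing to the Pull Back Theorem of \cite{KMB}. Your argument is correct and in fact more careful than what the paper supplies. The inductive scheme you set up---code tuples of $M$ by $k$-blocks in $A$, translate quantifier-free $L_0$-facts into $L_1$-facts of bounded complexity $n^*$, and simulate the back-and-forth game for $M,N$ inside the game for $A,B$---is precisely the intended ``routine'' proof, and your appeal to Theorem~\ref{thm:transfer} to push the coding formulae across is the right mechanism.

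You are also right to flag the finite shift: as literally stated, the lemma needs $n^*=0$ (a quantifier-free interpretation) to hold at every ordinal $\alpha$; for a general first-order interpretation one only gets $A\baf_{n^*+\alpha}B\Rightarrow M\baf_\alpha N$, which agrees with the stated conclusion for $\alpha\geq\omega$. This is harmless for the paper's purposes, since the sole application (Corollary~\ref{lineargroups]}) concerns $\GL_n$ and $\SL_n$ over an integral domain, and both admit quantifier-free interpretations (e.g.\ interpret $\GL_n(R)$ as pairs $(X,Y)\in R^{2n^2}$ with $XY=I$), so $n^*=0$ there. Your closing remark that one could alternatively just cite the Pull Back Theorem is exactly what the paper's footnote does.
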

	
	A particular consequence of the previous lemma is the following:
	
	\begin{cor}\label{lineargroups]}
		For any integral domains $R$ and $S$, any integer $n\geq 1$, and any ordinal $\alpha$, if $R\baf_\alpha S$ (as rings), then $GL_n(R)\baf_\alpha GL_n(S)$ and $SL_n(R)\baf_\alpha SL_n(S)$ (as groups).
	\end{cor}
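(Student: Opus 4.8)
The plan is to exhibit $SL_n$ and $GL_n$ as quantifier-free interpretations in the ring, with interpreting formulas depending only on $n$ (and not on the ring), and then apply Lemma \ref{intepretable}. For $SL_n$, interpret $SL_n(R)$ in $R$ by taking as domain the set of $n^2$-tuples $(a_{ij})_{i,j\le n}$ satisfying the atomic condition $\det\big((a_{ij})\big)=1$, where $\det$ is the fixed integer-coefficient determinant polynomial; take equality of tuples as the equivalence relation; and interpret the group multiplication, the identity, and inversion by the matrix-product polynomials, by $I_n$, and by the adjugate $A\mapsto\operatorname{adj}(A)$ respectively (the last being legitimate precisely because $\det A=1$ forces $A^{-1}=\operatorname{adj}(A)$). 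All of these formulas are polynomial, hence quantifier-free, and depend only on $n$.

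For $GL_n$ the one wrinkle is that invertibility of a matrix over an integral domain is not literally a quantifier-free condition, so I would carry the inverse determinant as an extra coordinate. Interpret $GL_n(R)$ in $R$ by taking as domain the set of $(n^2+1)$-tuples $\big((a_{ij}),d\big)$ satisfying the atomic condition $d\cdot\det\big((a_{ij})\big)=1$. The projection $\big((a_{ij}),d\big)\mapsto(a_{ij})$ is a bijection onto $GL_n(R)$: it is visibly surjective, and it is injective because $R$ is an integral domain, so $\det\big((a_{ij})\big)$ is a nonzerodivisor and $d$ is uniquely determined by $(a_{ij})$; hence equality of tuples is again the correct equivalence relation. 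Group multiplication is interpreted by $\big((a_{ij}),d\big)\cdot\big((b_{ij}),e\big)=\big((a_{ij})(b_{ij}),\,de\big)$ (the extra coordinates multiply because $\det$ is multiplicative), the identity by $(I_n,1)$, and inversion by $\big((a_{ij}),d\big)\mapsto\big(d\cdot\operatorname{adj}((a_{ij})),\,\det((a_{ij}))\big)$; these are again quantifier-free and depend only on $n$.

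With both interpretations in place, the corollary is immediate from Lemma \ref{intepretable}: the interpretation of $SL_n(R)$ in $R$ and of $SL_n(S)$ in $S$ (respectively, of $GL_n(R)$ in $R$ and of $GL_n(S)$ in $S$) use no parameters and are given by literally the same formulas, so $R\baf_\alpha S$ yields $SL_n(R)\baf_\alpha SL_n(S)$ and $GL_n(R)\baf_\alpha GL_n(S)$. I do not expect a genuine obstacle here beyond this bookkeeping; the one point requiring care is the faithfulness of the $GL_n$ interpretation, which is exactly where the integral-domain hypothesis is used (over an arbitrary commutative ring one would instead quotient by the relation ``equal matrix part'', still quantifier-free, but that refinement is unnecessary under the stated hypotheses).
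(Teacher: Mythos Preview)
Your proof is correct and follows exactly the approach the paper intends: the corollary is stated there simply as ``a particular consequence of'' Lemma~\ref{intepretable}, and you have supplied the explicit parameter-free, quantifier-free interpretations of $SL_n$ and $GL_n$ in the underlying ring. As a minor aside, the injectivity of your $GL_n$ domain map already holds in any commutative unital ring (inverses of units are unique), so the integral-domain hypothesis is not actually used at that step.
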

	
	\subsubsection{Equivalent but non-isomorphic structures}
	
	Recall from Theorem \ref{scott} that if $M$ and $N$ are countable structures, then $M \cong N$ if and only if $M \baf_\alpha N$ for all $\alpha < \omega_1$. This statement is in fact sharp in the sense that, for any $\alpha < \omega_1$, there are (uncountably many) structures $M$ and $N$ with $M \baf_\alpha N$ but $M \ncong N$.
 
	This fact is reasonably well-known among computability theorists and descriptive set theorists, but we are not aware of a reference that states it explicitly. We will obtain it from the fact that isomorphism for torsion-free abelian groups is complete analytic \cite{DowneyMontalban}. (We could also use the recent preprints \cite{LaskUl, PaoliniShelah} which show that torsion-free abelian groups are Borel complete in the sense of Friedman-Stanley \cite{friedmanstanley}.) In particular, we will want examples of such groups of various types (for example, ICC non-inner amenable groups), so that our main theorem is not vacuous; this is, as far as we are aware, not well-known though it can be obtained without too much difficulty.
	
	We will rely on the fact that the back-and-forth relations are Borel. To state this result precisely, fix a countable language $L$.  We let $\Mod(L)$ denote the Polish space of countably infinite $L$-structures whose universe is $\bb N$; see \cite[II.16.C.]{kechris} for more details.  The following proposition is routine and left to the reader. One only has to write out the definition of $\baf_\alpha$ and see that it is Borel; see, for example, \cite[Chapter II]{Montalban}.

	\begin{prop}
		For each $\alpha < \omega_1$ and $n<\omega$, the set $$\{(M,\vec a,N,\vec b)\in \Mod(L)\times \bb N^n\times \Mod(L)\times \bb N^n \ : \ (M,\vec a) \baf_\alpha (N,\vec b)\}$$ is a Borel subset of the space $\Mod(L)\times \bb N^n\times \Mod(L)\times \bb N^n$.  In particular, the set of pairs $(M,N)$ for which $M\baf_\alpha N$ is a Borel subset of $\Mod(L)\times \Mod(L)$.
	\end{prop}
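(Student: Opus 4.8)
The plan is to prove the statement by transfinite induction on $\alpha<\omega_1$. Because $\bb N^n$ is countable and discrete, the space $\Mod(L)\times\bb N^n\times\Mod(L)\times\bb N^n$ is a countable disjoint union of clopen copies of $\Mod(L)\times\Mod(L)$, one for each pair $(\vec a,\vec b)\in\bb N^n\times\bb N^n$, and a subset of it is Borel if and only if each of these ``slices'' is Borel in $\Mod(L)\times\Mod(L)$. So it is enough to prove, for every $\alpha<\omega_1$, every $n<\omega$, and every $\vec a,\vec b\in\bb N^n$, that
\[
B_{\alpha,\vec a,\vec b}:=\{(M,N)\in\Mod(L)\times\Mod(L) : (M,\vec a)\baf_\alpha(N,\vec b)\}
\]
is Borel. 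The only input about $\Mod(L)$ that I need is the standard fact that for each finitary quantifier-free $L$-formula $\varphi(\vec x)$ and each tuple $\vec a$ of appropriate length, $\{M\in\Mod(L) : M\models\varphi(\vec a)\}$ is clopen: satisfaction of an atomic formula is determined by finitely many coordinates of the structure, and quantifier-free formulas are finite Boolean combinations of atomic ones. Since $L$ is countable, there are only countably many quantifier-free $\varphi(\vec x)$ in a given finite tuple of variables, so the base case
\[
B_{0,\vec a,\vec b}=\bigcap_{\varphi}\{(M,N) : M\models\varphi(\vec a)\iff N\models\varphi(\vec b)\}
\]
is a countable intersection of clopen sets, hence Borel.

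For the inductive step, fix $\alpha>0$ and suppose $B_{\beta,\vec a',\vec b'}$ is Borel for every $\beta<\alpha$, every $m<\omega$, and every $\vec a',\vec b'\in\bb N^m$. Unwinding the recursion clause of Definition~\ref{defn:sym-bf} (read with $\baf_\beta$ in place of the $\equiv_\beta$ written there), $(M,\vec a)\baf_\alpha(N,\vec b)$ holds if and only if
\[
\bigwedge_{\beta<\alpha}\ \bigwedge_{k<\omega}\ \bigwedge_{\vec c\in\bb N^k}\ \bigvee_{\vec d\in\bb N^k}\ \big((M,\vec a\vec c)\baf_\beta(N,\vec b\vec d)\big)
\]
holds, together with the symmetric statement obtained by exchanging $M$ and $N$. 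Each innermost set $\{(M,N) : (M,\vec a\vec c)\baf_\beta(N,\vec b\vec d)\}$ is exactly $B_{\beta,\vec a\vec c,\vec b\vec d}$, which is Borel by the inductive hypothesis. Since $\alpha<\omega_1$, the index sets $\{\beta<\alpha\}$, $\omega$, and $\bb N^k$ are all countable, so $B_{\alpha,\vec a,\vec b}$ is built from Borel sets by finitely many rounds of countable union and countable intersection, and is therefore Borel. This closes the induction, and the ``in particular'' clause of the proposition is the case $n=0$ (so $\vec a=\vec b=\varnothing$).

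There is essentially no obstacle here beyond careful bookkeeping. The two features that make the argument go through are, first, that countability of $L$ and of each ordinal $\alpha<\omega_1$ forces every quantifier appearing in the unwound definition to range over a countable set, so that only countable Boolean operations are ever applied; and second, that in the symmetric tuple game Spoiler's tuple $\vec c$ and Duplicator's response $\vec d$ must have the same length $k$, which is why the induction must treat all arities simultaneously --- the relation $\baf_\alpha$ on $n$-tuples is defined in terms of $\baf_\beta$ on $(n+k)$-tuples for arbitrary $k$ --- rather than fixing a single arity $n$.
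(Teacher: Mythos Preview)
Your proof is correct and is exactly what the paper has in mind: the paper does not give a proof, merely remarking that one ``only has to write out the definition of $\baf_\alpha$ and see that it is Borel,'' and your transfinite induction carrying out that unwinding is the intended argument. Your observations that the induction must range over all arities simultaneously and that countability of $L$ and of $\alpha$ keeps all Boolean operations countable are the only points requiring care, and you handle them correctly.
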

	
	\begin{cor}
		Suppose that $B\subseteq \Mod(L)$ is a Borel set and that the set of pairs 
        \[ \{ (M,N)\in \Mod(L)\times \Mod(L) : M,N \in B \text{ and } M \cong N\}\]
        is not a Borel subset of $\Mod(L)\times \Mod(L)$.  Then for any $\alpha < \omega_1$, there are uncountably many pairs $(M,N)$ of $L$-structures in $B$ such that $M\baf_\alpha N$ but $M\not\cong N$.
	\end{cor}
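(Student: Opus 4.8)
The plan is to run a soft Borel-cardinality argument, playing the Borelness of the relations $\baf_\alpha$ (the preceding Proposition) against the assumed non-Borelness of isomorphism on $B$. Fix a countable ordinal $\alpha$. First I would set
\[
R_\alpha := \{(M,N)\in\Mod(L)\times\Mod(L) : M,N\in B \text{ and } M\baf_\alpha N\},
\]
which is Borel: it is the intersection of $B\times B$ (Borel, as $B$ is Borel) with the set $\{(M,N):M\baf_\alpha N\}$, which is Borel by the Proposition. Let $R_\cong$ denote the set displayed in the statement of the corollary, namely isomorphism restricted to $B$. Since $M\cong N$ implies $M\baf_\beta N$ for every ordinal $\beta$ (the trivial direction of Scott's analysis, which does not require Theorem \ref{scott}), we get $R_\cong\subseteq R_\alpha$, and therefore
\[
R_\alpha\setminus R_\cong = \{(M,N) : M,N\in B,\ M\baf_\alpha N,\ M\not\cong N\}
\]
is exactly the set of pairs whose uncountability we must establish.

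Now I would argue by contradiction: suppose $R_\alpha\setminus R_\cong$ is countable. Every countable subset of a Polish space is $F_\sigma$, hence Borel, so $R_\alpha\setminus R_\cong$ is Borel; but then $R_\cong = R_\alpha\setminus(R_\alpha\setminus R_\cong)$ is a difference of Borel sets and so is itself Borel, contradicting the hypothesis on $B$. Hence $R_\alpha\setminus R_\cong$ is uncountable, which is precisely the conclusion, and since $\alpha<\omega_1$ was arbitrary this completes the proof. As a by-product the same reasoning shows $B$ itself must be uncountable, since a countable $B$ would force $R_\cong$ to be countable and hence Borel.

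There is essentially no obstacle here — the content is entirely in the preceding Proposition and in the non-Borelness hypothesis — but two small points deserve care. First, one must notice that only the easy direction $M\cong N\Rightarrow M\baf_\alpha N$ is needed, and only for the single ordinal $\alpha$ at hand; the full strength of Scott's theorem is not used. Second, one should record where the hypothesis on $B$ is actually met: taking $B$ to be the (Borel) set of torsion-free abelian groups, isomorphism on $B$ is non-Borel by \cite{DowneyMontalban} (alternatively, $B$ is Borel complete by \cite{LaskUl, PaoliniShelah} in the sense of \cite{friedmanstanley}), so the corollary already yields uncountably many non-isomorphic but $\baf_\alpha$-equivalent pairs; refining $B$ to impose further properties (ICC, (non-)inner amenability, and so on) is what is needed downstream to ensure the main theorem is applied to genuinely different group von Neumann algebras.
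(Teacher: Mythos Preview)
Your argument is correct and is precisely the natural expansion of what the paper leaves implicit: the paper states the corollary without proof, treating it as immediate from the Borelness of $\baf_\alpha$ established in the preceding Proposition, and your contradiction via $R_\cong = R_\alpha \setminus (R_\alpha \setminus R_\cong)$ is exactly the one-line Borel arithmetic one would supply. Your side remarks about only needing the trivial direction of Scott and about instantiating $B$ downstream are accurate and helpful, though not strictly part of the proof.
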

	
	Downey and Montalb\'an \cite{DowneyMontalban} showed that the isomorphism relation on the set of torsion-free abelian groups is complete analytic, and thus, not Borel. We can leverage this result to obtain the following, which is perhaps well-known but for which we could find no reference:
	
	\begin{cor}
		The isomorphism relation between countable, ICC, non-inner amenable\footnote{A group $G$ is \textbf{inner amenable} if $G\setminus \{e\}$ admits a finitely additive, conjugacy-invariant probability measure.  The relevance for us is that, by a result of Effros \cite{Effros}, if a group von Neumann algebra $L(G)$ has property Gamma, then $G$ is inner amenable.} groups is complete analytic, as is the isomorphism relation between countable, ICC, inner amenable, nonamenable groups.  Consequently, for any $\alpha < \omega_1$, there are uncountably many pairs $(G,H)$ of countable ICC non-inner amenable (resp. inner amenable, nonamenable) groups such that $G\baf_\alpha H$ but $G\not\cong H$.
	\end{cor}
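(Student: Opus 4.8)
The plan is to establish the two complexity statements via explicit Borel reductions from the isomorphism relation on torsion-free abelian groups --- which is complete analytic by \cite{DowneyMontalban} --- and then to obtain the statement about $\baf_\alpha$ as an immediate application of the preceding corollary. For the class of ICC non-inner amenable groups, I would use the map $\Phi$ that sends a torsion-free abelian group $A$ (coded as a structure with underlying set $\mathbb N$) to the free product $\Phi(A) := A * F_2$, where $F_2$ is the free group of rank two. This $\Phi$ is Borel, since the reduced words of $A * F_2$ can be enumerated and multiplied Borel-uniformly in $A$. One then checks three things. First, $\Phi(A)$ is always ICC: it is a free product of two nontrivial groups and is not the infinite dihedral group (as it contains $F_2$), and such free products are ICC. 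Second, $\Phi(A)$ is always non-inner amenable: $\Phi(A) = A * \mathbb Z * \mathbb Z$ is a nontrivial free product that is not virtually cyclic, hence acylindrically hyperbolic, and acylindrically hyperbolic groups are not inner amenable --- alternatively, one may appeal directly to the well-known fact that a nontrivial free product other than $\mathbb Z/2 * \mathbb Z/2$ is never inner amenable. Third, $\Phi$ reflects isomorphism: a nontrivial torsion-free abelian group is freely indecomposable (being abelian) and is either infinite cyclic or non-cyclic (never finite cyclic), so by the uniqueness of the Kurosh decomposition one recovers $A$ from $A * F_2$ up to isomorphism --- it is the unique non-cyclic freely indecomposable free factor, while the degenerate cases $A = \{e\}$ and $A \cong \mathbb Z$ are distinguished by $\Phi(A)$ being the free group of rank $2$, respectively $3$. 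Together with the trivial forward implication, this makes $\Phi$ a Borel reduction, so isomorphism of ICC non-inner amenable groups is complete analytic, in particular not Borel.

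For the class of ICC inner amenable nonamenable groups, I would compose $\Phi$ with a direct product: set $\Psi(A) := \Phi(A) \times S$, where $S$ is the group of finitely supported permutations of $\mathbb N$. Then $\Psi(A)$ is ICC (a direct product of ICC groups is ICC), nonamenable (it contains $F_2$), and inner amenable: $S$ is ICC and locally finite, hence amenable, hence inner amenable, and a direct product $G \times H$ with $H$ inner amenable is inner amenable --- a conjugation-invariant mean on $H \setminus \{e\}$ pushes forward along $h \mapsto (e_G, h)$ to $(G \times H) \setminus \{e\}$, since conjugation in $G \times H$ preserves the slice $\{e_G\} \times (H \setminus \{e\})$ and acts on it through $H$. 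Finally, $\Psi$ reflects isomorphism: since $\Phi(A) = A * F_2$ is torsion-free whereas $S$ is a torsion group, the set of torsion elements of $\Psi(A)$ is exactly $\{e_{\Phi(A)}\} \times S$, which is therefore a characteristic subgroup with quotient isomorphic to $\Phi(A)$; hence $\Psi(A) \cong \Psi(B)$ forces $\Phi(A) \cong \Phi(B)$ and then $A \cong B$ by the previous paragraph. Thus $\Psi$ is a Borel reduction and isomorphism of ICC inner amenable nonamenable groups is complete analytic as well.

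To conclude, I would feed these reductions into the preceding corollary, which requires checking that the relevant classes are Borel subsets of $\Mod(L)$, where $L$ is the language of groups. That ICC is Borel is immediate from the definition, and amenability and inner amenability are Borel via the standard reformulation of the existence of a (conjugation-)invariant mean as a countable conjunction, over finite subsets $F$ and rationals $\varepsilon > 0$, of the arithmetical assertion that there is a finitely supported probability vector that is $\varepsilon$-invariant under left translation by (respectively, conjugation by) the elements of $F$. Hence $B_1 := \{\text{ICC, non-inner amenable groups}\}$ and $B_2 := \{\text{ICC, inner amenable, nonamenable groups}\}$ are Borel, and by the two reductions above the isomorphism relation restricted to $B_i \times B_i$ is not Borel --- otherwise pulling back along $\Phi \times \Phi$, respectively $\Psi \times \Psi$, would make isomorphism of torsion-free abelian groups Borel. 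Applying the preceding corollary with $B = B_1$ and then with $B = B_2$ yields, for every $\alpha < \omega_1$, uncountably many pairs $(G,H)$ in the relevant class with $G \baf_\alpha H$ but $G \not\cong H$.

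I expect the main obstacle to be the combination of the non-inner amenability of $\Phi(A)$ and the verification that $\Phi$, and hence $\Psi$, reflects isomorphism for \emph{every} torsion-free abelian group $A$, including the poorly behaved ones of infinite rank. The former relies on the acylindrical hyperbolicity of free products (and on the fact that acylindrically hyperbolic groups fail to be inner amenable), while the latter is a somewhat delicate bookkeeping exercise with the Kurosh subgroup theorem, where the degenerate cases $A = \{e\}$ and $A \cong \mathbb Z$ need separate attention.
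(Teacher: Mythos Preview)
Your argument is correct and follows the same overall strategy as the paper: reduce from isomorphism of torsion-free abelian groups (Downey--Montalb\'an) via explicit Borel group constructions, check that the target classes are Borel, and then invoke the preceding corollary. The differences lie only in the choice of auxiliary groups. For the first reduction the paper uses $A\mapsto A*\mathbb{Z}$ (with non-inner amenability taken from \cite[Exercise~6.21]{ioana}), while you use $A\mapsto A*F_2$ and justify non-inner amenability via acylindrical hyperbolicity; both work, and the Kurosh bookkeeping is essentially identical. For the second reduction the paper takes $G\mapsto G\times(\mathbb{F}_2\times\mathbb{Z})$ without further explanation, whereas you take $(A*F_2)\times S_\infty$ and give a clean torsion-subgroup argument for reflection of isomorphism. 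Your choice is in fact the safer one: $G\times(\mathbb{F}_2\times\mathbb{Z})$ has the copy of $\mathbb{Z}$ in its center and so is \emph{not} ICC, while $S_\infty$ is ICC and keeps the product ICC. In short, the route is the same; your specific choices and the additional verifications you supply are, if anything, more careful than the paper's sketch.
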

	
	\begin{proof}
		First note that the class of inner amenable groups is Borel, whence so is the class of non-inner amenable groups; this follows from the fact that being inner amenable is expressible as an $L_{\omega_1,\omega}$-sentence (see \cite{gold} as described in more detail before Proposition \ref{prop:amenable-bf} below) and thus define Borel sets.  % (see \cite[Proposition 5.1.2]{goldlodhasri}). 
        Now it is a consequence of the Kurosh subgroup theorem that if $G$ and $H$ are freely indecomposable, nontrivial, noncyclic groups for which $G*\bb Z\cong H* \bb Z$, then $G\cong H$.  As a result, the map $G\mapsto G*\bb Z$ yields a Borel reduction from isomorphism for torsion-free abelian groups to isomorphism for ICC, non-inner amenable groups, whence the latter is also complete analytic.  (That $G*\bb Z$ is not inner amenable when $G$ is abelian can be deduced from \cite[Exercise 6.21]{ioana}.)  Finally, note that the map $G\mapsto G\times (\bb F_2\times \bb Z)$ is a Borel reduction from isomorphism between ICC non-inner amenable groups to ICC inner amenable, nonamenable groups. 
	\end{proof}
	
	Since isomorphism between countable fields of characteristic $0$ is also complete analytic (see \cite{friedmanstanley} and later \cite{MPSS}), we immediately obtain:
	
	\begin{cor}
		For any $\alpha < \omega_1$, there are uncountably many pairs $(K,L)$ of countable fields of characteristic $0$ such that $K\baf_\alpha L$ but $K\not\cong L$.
	\end{cor}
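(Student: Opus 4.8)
The plan is to apply the Corollary above — the one asserting that if $B\subseteq\Mod(L)$ is a Borel set on which the isomorphism relation fails to be Borel, then for every $\alpha<\omega_1$ there are uncountably many pairs in $B$ that are $\baf_\alpha$-equivalent but non-isomorphic — taking $L$ to be the language of rings and $B\subseteq\Mod(L)$ to be the set of those countable $L$-structures that are fields of characteristic $0$. Once we check that $B$ is Borel and that isomorphism restricted to $B$ is not Borel, the conclusion follows immediately.

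First I would note that $B$ is Borel: the field axioms are finitary first-order, so the set of countable models of the field axioms is Borel, and having characteristic $0$ is the countable conjunction of the sentences $\underbrace{1+\cdots+1}_{n}\neq 0$ for $n\geq 1$, an $L_{\omega_1,\omega}$-definable and hence Borel condition; $B$ is the intersection of these. (Every member of $B$ is automatically infinite, as it contains a copy of $\bb Q$, so $B$ really does sit inside $\Mod(L)$.) Next I would invoke the theorem that the isomorphism relation on countable fields of characteristic $0$ is complete analytic — in particular not Borel — which is due to Friedman--Stanley \cite{friedmanstanley} and, later, \cite{MPSS}. This is precisely the hypothesis on $B$ required by the Corollary, and applying it yields, for each $\alpha<\omega_1$, uncountably many pairs $(K,L)$ of countable fields of characteristic $0$ with $K\baf_\alpha L$ but $K\not\cong L$.

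There is no genuine obstacle here beyond this bookkeeping: all of the mathematical content resides in the cited complexity result for field isomorphism together with the (already established) fact that the relations $\baf_\alpha$ are Borel. I note that in place of complete analyticity one could equally invoke any Borel-completeness result for characteristic-$0$ fields in the sense of Friedman--Stanley, which gives the same non-Borelness of isomorphism and hence the same conclusion.
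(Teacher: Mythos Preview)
Your proof is correct and follows essentially the same approach as the paper: the paper simply cites that isomorphism on countable fields of characteristic $0$ is complete analytic (via \cite{friedmanstanley} and \cite{MPSS}) and invokes the preceding corollary, exactly as you do. Your additional bookkeeping---verifying explicitly that the class of characteristic-$0$ fields is Borel and that its members are infinite---is a welcome elaboration of details the paper leaves implicit.
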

	
	For fields (and even certain integral domains), if $GL_n(K)\cong GL_n(L)$ or $SL_n(K)\cong SL_n(L)$ ($n\geq 3$), then $K\cong L$ (see \cite{omeara}).  Combined with Corollary \ref{lineargroups]}, we obtain:
	
	\begin{cor}
		For any $\alpha<\omega_1$, there are uncountably many pairs $(K,L)$ of countable fields of characteristic $0$ such that, for all $n\geq 3$, we have $GL_n(K)\baf_\alpha GL_n(L)$ (resp. $SL_n(K)\baf_\alpha SL_n(L)$) but $GL_n(K)\not\cong GL_n(L)$ (resp. $SL_n(K)\not\cong SL_n(L)$).
	\end{cor}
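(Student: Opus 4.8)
The plan is simply to assemble three facts already in hand: the corollary immediately above (uncountably many back-and-forth equivalent but non-isomorphic fields of characteristic $0$), Corollary \ref{lineargroups]} (transfer of $\baf_\alpha$ from a ring to its groups $GL_n$ and $SL_n$), and the recognition theorem for linear groups over fields cited from \cite{omeara} (for $n\geq 3$, an isomorphism $GL_n(K)\cong GL_n(L)$ or $SL_n(K)\cong SL_n(L)$ forces $K\cong L$). The new statement is essentially the composite of these.

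First I would fix $\alpha<\omega_1$ and apply the preceding corollary to obtain an uncountable family of pairs $(K,L)$ of countable fields of characteristic $0$ with $K\baf_\alpha L$ and $K\not\cong L$. I claim the same family works. Since every field is an integral domain, Corollary \ref{lineargroups]} applies to each such pair and yields $GL_n(K)\baf_\alpha GL_n(L)$ and $SL_n(K)\baf_\alpha SL_n(L)$ for every $n\geq 1$; there is no uniformity issue in $n$, as we are simply invoking that corollary once, for the fixed pair of rings, for each $n$ separately. For non-isomorphism I would use the contrapositive of the O'Meara-type theorem: from $K\not\cong L$ we get $GL_n(K)\not\cong GL_n(L)$ and $SL_n(K)\not\cong SL_n(L)$ for every $n\geq 3$. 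Combining these, each pair $(K,L)$ in the family witnesses the statement for all $n\geq 3$ simultaneously, and the family is uncountable, as desired.

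There is no genuine obstacle here; the only points meriting brief attention are checking that the hypotheses of Corollary \ref{lineargroups]} and of the cited result from \cite{omeara} are met — both only require the base structures to be fields (indeed suitable integral domains) — and noting that one and the same pair $(K,L)$ serves for every $n\geq 3$, so the pair need not be chosen depending on $n$.
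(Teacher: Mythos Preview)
Your proposal is correct and follows exactly the paper's approach: the paper does not give a separate proof but simply notes that the corollary follows by combining the preceding corollary on fields, Corollary \ref{lineargroups]}, and the O'Meara recognition theorem, which is precisely what you do.
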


	% \begin{lem}
		% Suppose that $G$ and $H$ are freely indecomposable infinite groups that are not cyclic.  Further suppose that $G*\bb Z\cong H* \bb Z$.  Then $G\cong H$.
		% \end{lem}
	
	% \begin{proof}
		% Let $G':=G*\bb Z$ and $H':=H*\bb Z$.  Suppose that $\phi:G'\to H'$ is an isomorphism.  By Kurosh's theorem, there is a subset $X\subseteq H'$, subgroups $A_i$, $i\in I$, $B_j$, $j\in J$, of $H$ and $\bb Z$ respectively, and elements $h_i,k_j\in H'$ such that $\phi(G)= F(X)**_{i\in I}h_iA_ih_i^{-1}**_{j\in J}k_jB_jk_j^{-1}$.  Since $G$ is freely indecomposable and not cyclic, we have that $\phi(G)=hAh^{-1}$ for some subgroup $A$ of $H$.  By composing $\phi$ with conjugation by $h^{-1}$, we may assume that $\phi(G)=A$, a subgroup of $H$.  Set $\psi:=\varphi^{-1}$.  Then by the same argument, $\psi(H)=gCg^{-1}$ for some subgroup $C$ of $G$ and some $g\in G'$.  Then $G=\psi(\phi(G))=\psi(A)\subseteq \psi(H)=gCg^{-1}$.  By uniqueness of normal forms, $g\in G$ and $G=gCg^{-1}=\psi(H)$.  So $\phi|G$ is an isomorphism between $G$ and $H$ whose inverse is $\psi|H$.
		% \end{proof}
	
	% This is known.  I found a proof in a report some student wrote (Theorem 4.5).  Find a better reference.
	
	\subsubsection{Back-and-forth equivalent groups}
	
	We now collect some facts about back-and-forth equivalent groups.  First we recall that the F\o lner set characterization of amenability can be described by $\Pi_2$-sentences (see \cite[Theorem 3.4.1]{goldlodhasri}).  Using an appropriate notion of F\o lner set for inner amenable groups, inner amenability can also be described by $\Pi_2$-sentences (see \cite[Section 4]{gold}).  Consequently, we have the following:
	
	\begin{prop}\label{prop:amenable-bf}
		If $G$ and $H$ are groups for which $G \baf_2 H$, then $G$ is (inner) amenable if and only if $H$ is.  In particular, if $G$ is an amenable group for which there is a nonamenable group $H$ satisfying $G \equiv H$ (for example, $G=S_\infty$), then $G\not\equiv^{\operatorname{bf}}_2 H$.
	\end{prop}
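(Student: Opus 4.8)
The plan is to read the result off two facts that are already in hand: the $\Pi_2$-expressibility of (inner) amenability recalled just above, and Theorem~\ref{thm:transfer}. First I would fix, via the F\o lner-set characterizations, a single $\Pi_2$-sentence $\sigma_{\mathrm{am}}$ in the language of groups such that a group $K$ satisfies $\sigma_{\mathrm{am}}$ precisely when $K$ is amenable, and likewise a $\Pi_2$-sentence $\sigma_{\mathrm{iam}}$ whose models are exactly the inner amenable groups; these are exactly \cite[Theorem~3.4.1]{goldlodhasri} and \cite[Section~4]{gold}. The one point to keep an eye on is the normal form: ``for every finite $F\subseteq K$'' is written as a countable conjunction over $|F|$, and ``there exists a finite F\o lner set'' as a countable disjunction over its size, so that the sentence really is $\Pi_2$ and not $\Pi_3$; the cited references carry this out.

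Given those sentences, the first assertion falls out immediately. Since $\baf_\alpha$ is symmetric, $G\baf_2 H$ also gives $H\baf_2 G$, and Theorem~\ref{thm:transfer}, applied with $\alpha=2$ and the empty tuples, yields $G\models\sigma_{\mathrm{am}}\iff H\models\sigma_{\mathrm{am}}$; that is, $G$ is amenable iff $H$ is. Replacing $\sigma_{\mathrm{am}}$ by $\sigma_{\mathrm{iam}}$ throughout gives the inner amenable version.

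For the ``in particular'' clause, I would argue by contradiction: let $G$ be amenable and $H$ nonamenable with $G\equiv H$ (such pairs exist, for instance with $G=S_\infty$, the finitary symmetric group, which is locally finite, hence amenable, yet is elementarily equivalent to a nonamenable group). If $G\baf_2 H$ held, the first part of the proposition would force $H$ amenable, contradicting the choice of $H$; hence $G\not\baf_2 H$, even though $G\equiv H$. This is the promised example witnessing that the implication ``$M\baf_\omega N\Rightarrow M\equiv N$'' from Remark~\ref{allthesame} has no converse.

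I do not expect a genuine obstacle here: the substance is entirely in the cited $\Pi_2$-expressibility of (inner) amenability and in Theorem~\ref{thm:transfer}, and the deduction is a one-line application of preservation under $\baf_2$. If one insisted on a self-contained treatment, the only delicate part would be verifying that, after coding $F$ and $S$ by finite tuples, the F\o lner inequality $|sS\setminus S|<\varepsilon|S|$ becomes a correct quantifier-free formula in those variables and that the cardinality quantifiers are organized as above, so that the normal-form count indeed lands in $\Pi_2$.
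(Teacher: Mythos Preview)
Your proposal is correct and is exactly the argument the paper intends: the paragraph immediately preceding the proposition records that (inner) amenability is axiomatized by $\Pi_2$-sentences via the F\o lner characterizations, and the ``Consequently'' is precisely the appeal to Theorem~\ref{thm:transfer} at level $\alpha=2$ that you spell out. Your treatment of the ``in particular'' clause by contraposition is the obvious reading, and your remark about packaging the F\o lner condition so that the sentence lands in $\Pi_2$ rather than $\Pi_3$ is the only point requiring care, which the cited references handle.
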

	
	This proposition confirms our statement from earlier that elementarily equivalent structures need not be back-and-forth equivalent, since, as described in the introduction and shown in \cite{gold}, there are elementarily equivalence groups one of which is amenable and the other of which is not inner amenable.  
 
 We next record the fact that $\baf_\omega$-equivalence for groups is really only interesting for groups that are not finitely generated.
	
	\begin{fact}[Knight and Saraph \cite{KnightSaraph}]
		Suppose that $G$ and $H$ are groups such that $H$ is finitely generated.  If $G \baf_3 H$, then $G\cong H$.  In particular, if $G$ and $H$ are groups such that $G\baf_\omega H$ but $G\not\cong H$, then neither $G$ nor $H$ are finitely generated.
	\end{fact}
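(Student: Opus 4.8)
The plan is to feed Duplicator a finite generating tuple of $H$ as Spoiler's opening move and show that the response Duplicator is forced to give in $G$ is itself a generating tuple of $G$ realizing the same quantifier-free type; the two tuples then define the desired isomorphism outright. So fix a finite generating tuple $\vec d=(d_1,\dots,d_n)$ for $H$. Applying the second (``back'') clause of Definition \ref{defn:sym-bf} for $G\baf_3 H$ with $\beta=2<3$ to the tuple $\vec d$, we get a tuple $\vec c=(c_1,\dots,c_n)$ from $G$ with $(G,\vec c)\baf_2(H,\vec d)$. Since $\baf_2$ entails $\baf_0$ (use $\beta=0$ and the empty tuple in the recursive clause), $\vec c$ and $\vec d$ realize the same quantifier-free type; in the language of groups this says exactly that for all group-word terms $t,s$ in $n$ variables, $t(\vec c)=s(\vec c)$ holds in $G$ if and only if $t(\vec d)=s(\vec d)$ holds in $H$. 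As $\vec d$ generates $H$, every element of $H$ has the form $t(\vec d)$, so $t(\vec d)\mapsto t(\vec c)$ is a well-defined injective homomorphism $\varphi\colon H\to G$ whose image is the subgroup $\langle c_1,\dots,c_n\rangle$ of $G$.

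To finish I would show $\varphi$ is onto, i.e.\ that $\vec c$ generates $G$. Given $g\in G$, the first (``forth'') clause of $(G,\vec c)\baf_2(H,\vec d)$ with $\beta=0$ produces $h\in H$ with $(G,\vec c,g)\baf_0(H,\vec d,h)$; writing $h=t(\vec d)$, the atomic formula $x_{n+1}=t(x_1,\dots,x_n)$ holds of $(\vec d,h)$ in $H$, hence of $(\vec c,g)$ in $G$, so $g=t(\vec c)\in\langle c_1,\dots,c_n\rangle$. Thus $\langle c_1,\dots,c_n\rangle=G$ and $\varphi$ is an isomorphism. The ``in particular'' clause is then immediate: $\baf_\omega$ implies $\baf_3$ and $\baf_\alpha$ is symmetric, so $G\baf_\omega H$ together with finite generation of either group forces $G\cong H$.

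I expect no genuine obstacle here --- the argument is soft. The only things to be careful about are the conventions (one works in the language of groups with an inverse and identity symbol, so that the term-values of $\vec c$ are exactly the elements of the subgroup it generates) and keeping track of which clause, and which $\beta$, of Definition \ref{defn:sym-bf} is invoked at each step. One can note in passing that the same argument uses only $G\baf_2 H$; this is sharp, since $\mathbb{Q}\baf_1\mathbb{Z}$ while $\mathbb{Q}\not\cong\mathbb{Z}$.
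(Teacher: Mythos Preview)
Your argument is correct. Note, however, that the paper does not give its own proof of this fact: it is recorded as a \texttt{fact} environment with a bare citation to Knight and Saraph \cite{KnightSaraph} and nothing further, so there is no in-paper proof to compare against. Your direct unwinding of Definition~\ref{defn:sym-bf} is the standard way to see the result, and every step checks out, including the care you take about which clause and which $\beta$ is invoked.

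Your closing remark that $G\baf_2 H$ already suffices is also correct (run your own argument with $\beta=1$ at the first step and $\beta=0$ at the second), and the sharpness example $\mathbb{Q}\baf_1\mathbb{Z}$ is valid: clearing denominators shows every finite tuple from $\mathbb{Q}$ has the same quantifier-free type as an integer tuple, and the converse direction is trivial. The paper's statement with $\baf_3$ is presumably just a safe translation of the d-$\Sigma_2$ (hence $\Sigma_3$) Scott sentence result in \cite{KnightSaraph} through the comparison in Remark~\ref{allthesame}, rather than a claim of optimality for the symmetric relations.
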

	
	\begin{remark}
		For distinct $m,n\geq 2$, the previous lemma shows that $\bb F_m\not\equiv^{\operatorname{bf}}_3 \bb F_n$, whence the techniques developed in this paper cannot help determine if $L(\bb F_m)$ and $L(\bb F_n)$ are elementarily equivalent for distinct $m,n\geq 2$.
	\end{remark}
	
    The following lemma is clear:
	
	\begin{lem}\label{directbaf}
		For any ordinal $\alpha$ and any two pairs of groups $(G_i,H_i)$, $i=1,2$, such that $G_i\baf_\alpha H_i$ for $i=1,2$, we have $G_1\times G_2\baf_\alpha H_1\times H_2$.
	\end{lem}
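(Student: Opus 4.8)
The plan is to prove, by transfinite induction on $\beta$, a version of the statement that carries parameter tuples, and then to recover the lemma by specializing to $\beta=\alpha$ with all tuples empty. For $n\in\bb N$ and tuples $\vec u=(u_1,\dots,u_n)\in G_1^n$, $\vec v=(v_1,\dots,v_n)\in G_2^n$, write $\vec u\times\vec v$ for the length-$n$ tuple in $G_1\times G_2$ whose $i$-th entry is $(u_i,v_i)$. The claim to prove by induction is: for every ordinal $\beta$, every $n$, and all tuples $\vec a_1\in G_1^n$, $\vec a_2\in G_2^n$, $\vec b_1\in H_1^n$, $\vec b_2\in H_2^n$, if $(G_1,\vec a_1)\baf_\beta(H_1,\vec b_1)$ and $(G_2,\vec a_2)\baf_\beta(H_2,\vec b_2)$, then $(G_1\times G_2,\vec a_1\times\vec a_2)\baf_\beta(H_1\times H_2,\vec b_1\times\vec b_2)$. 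Taking $\beta=\alpha$ and $n=0$ yields the lemma.

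For the base case $\beta=0$, the key observation is that a group word $w(x_1,\dots,x_n)$ is evaluated coordinatewise in a direct product, so for $\vec u\in G_1^n$, $\vec v\in G_2^n$ and group words $w,w'$ one has $G_1\times G_2\models(w=w')(\vec u\times\vec v)$ if and only if $G_1\models(w=w')(\vec u)$ and $G_2\models(w=w')(\vec v)$. Hence the atomic type of $\vec a_1\times\vec a_2$ in $G_1\times G_2$ is determined by those of $\vec a_1$ in $G_1$ and of $\vec a_2$ in $G_2$ (an atomic formula holds of $\vec a_1\times\vec a_2$ exactly when it holds of both $\vec a_1$ and $\vec a_2$), and likewise for $\vec b_1\times\vec b_2$ in $H_1\times H_2$; since these atomic types agree factor by factor by hypothesis, they agree on the products, and passing to Boolean combinations gives $(G_1\times G_2,\vec a_1\times\vec a_2)\baf_0(H_1\times H_2,\vec b_1\times\vec b_2)$.

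For $\beta>0$ (treating the successor and limit cases uniformly), since $\baf_\beta$ is symmetric (clear from Definition \ref{defn:sym-bf}, whose base clause is symmetric and whose inductive clause just swaps the two halves under interchanging the structures), it suffices to verify the ``forth'' half. Fix $\gamma<\beta$ and a finite tuple $\vec e$ from $G_1\times G_2$, of length $m$ say, and write $\vec e=\vec c_1\times\vec c_2$ where $\vec c_i\in G_i^m$ is the $i$-th coordinate projection of $\vec e$. Applying the forth clause for $(G_1,\vec a_1)\baf_\beta(H_1,\vec b_1)$ and for $(G_2,\vec a_2)\baf_\beta(H_2,\vec b_2)$ at the ordinal $\gamma$, we obtain tuples $\vec d_1\in H_1^m$ and $\vec d_2\in H_2^m$ with $(G_1,\vec a_1\vec c_1)\baf_\gamma(H_1,\vec b_1\vec d_1)$ and $(G_2,\vec a_2\vec c_2)\baf_\gamma(H_2,\vec b_2\vec d_2)$. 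By the induction hypothesis at $\gamma$ applied to these length-$(n+m)$ tuples, $(G_1\times G_2,(\vec a_1\vec c_1)\times(\vec a_2\vec c_2))\baf_\gamma(H_1\times H_2,(\vec b_1\vec d_1)\times(\vec b_2\vec d_2))$; and since $(\vec a_1\vec c_1)\times(\vec a_2\vec c_2)=(\vec a_1\times\vec a_2)\vec e$ and similarly on the $H$ side, the tuple $\vec d_1\times\vec d_2$ is the required response. This closes the induction.

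The statement is, as remarked, clear, so there is no serious obstacle; the only points that need a moment's thought are that one must strengthen to the version with parameter tuples for the induction to close, and that one must track tuple lengths so that the responses produced on the two factors recombine into a single response in the product. The whole argument rests on the trivial fact that atomic formulas in the language of groups decompose coordinatewise over a direct product.
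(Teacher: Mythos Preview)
Your argument is correct and is exactly the routine transfinite induction the paper has in mind; the paper itself gives no proof, simply declaring the lemma ``clear,'' so there is nothing further to compare.
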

	
	We wish to prove an analogous result for free products.  First, some notation:  given tuples $\vec g=(g_1,\ldots,g_n)$ and $\vec g'=(g_1',\ldots,g_n')$ from groups $G_1$ and $G_2$ respectively such that the only possible entries of $\vec g$ and $\vec g'$ that are the identity are $g_1$ and $g_n'$, we obtain the element $\vec g\boxplus \vec g':=\prod_{i=1}^n g_ig_i'$ of $G_1*G_2$.  By the normal form theorem for free products, every element of $G_1*G_2$ is uniquely of this form.  Thus, given an element $g$ of a free product $G_1* G_2$, we can uniquely define tuples $\vec g(1)$ and $\vec g(2)$ from $G_1$ and $G_2$ such that $g=\vec g(1)\boxplus \vec g(2)$.  We extend this notation to finite tuples $\vec g=(g_1,\ldots,g_n)$ from $G_1* G_2$ by setting $\vec g(1)=(\vec g_1(1),\ldots,\vec g_n(1))$ and $\vec g(2)=(\vec g_1(2),\ldots,\vec g_n(2))$ for the corresponding finite tuples from $G_1$ and $G_2$.  The following lemma is clear:
 
	\begin{lem}
		For finite tuples $\vec g_1$ and $\vec g_2$ from $G_1*G_2$ and $H_1*H_2$ respectively, we have that $\vec g_1$ and $\vec g_2$ have the same quantifier-free types if and only if $\vec g_1(i)$ and $\vec g_2(i)$ have the same quantifier-free types in $G_i$ and $H_i$ respectively for $i=1,2$. 
	\end{lem}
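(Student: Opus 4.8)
The plan is to derive this directly from the normal form theorem for free products, which has just been recalled; two of its standard corollaries do essentially all the work. First, the quantifier-free type of a tuple $\vec h$ in a group is exactly the marked isomorphism type of the subgroup it generates: tuples $\vec h$ and $\vec h'$ of the same length have the same quantifier-free type iff the entrywise correspondence extends to an isomorphism of the generated subgroups. Second, if $A \le G_1$ and $B \le G_2$, then the subgroup of $G_1 * G_2$ generated by $A \cup B$ is the internal free product $A * B$, and the marked isomorphism type of $A * B$ — carrying a generating sequence each entry of which lies in $A$ or in $B$ — depends only on the marked isomorphism types of $A$ and of $B$, each equipped with the sub-sequence of the generating sequence lying in it.

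Write $\vec g_1 = (u_1, \dots, u_n)$ with each $u_j \in G_1 * G_2$, say with $m_j$ syllable-pairs; then $u_j$ is a fixed group word $F_j$ evaluated on the $G_1$- and $G_2$-syllables of $u_j$, i.e.\ on the appropriate entries of $\vec g_1(1)$ and $\vec g_1(2)$, so with $F = (F_1,\dots,F_n)$ we have $\vec g_1 = F\bigl(\vec g_1(1),\vec g_1(2)\bigr)$ (a genuine $\boxplus$-expression). For the implication ($\Leftarrow$), I would assume $\vec g_1(i)$ and $\vec g_2(i)$ have the same quantifier-free type for $i = 1,2$; they then have equal length, and — reading off from those types which syllables are the identity and using the conventions restricting where identity syllables may occur — one checks that $\vec g_2$ is given by the same word-tuple $F$, i.e.\ $\vec g_2 = F\bigl(\vec g_2(1),\vec g_2(2)\bigr)$, again a legitimate $\boxplus$-expression. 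By the two facts above, the tuple $(\vec g_1(1),\vec g_1(2))$ inside $\langle \vec g_1(1)\rangle * \langle \vec g_1(2)\rangle \le G_1 * G_2$ and the tuple $(\vec g_2(1),\vec g_2(2))$ inside $\langle \vec g_2(1)\rangle * \langle \vec g_2(2)\rangle \le H_1 * H_2$ have the same quantifier-free type, and applying the common word-tuple $F$ preserves this, so $\vec g_1$ and $\vec g_2$ have the same quantifier-free type. For the implication ($\Rightarrow$) one runs the normal form theorem in the other direction: for a group word $w$ in $n$ variables, whether $w(\vec g_1) = w\bigl(F(\vec g_1(1),\vec g_1(2))\bigr)$ is trivial in $G_1 * G_2$ is decided by the usual cancellation algorithm for free products, which repeatedly asks only whether certain subwords in the $G_1$-syllables (resp.\ the $G_2$-syllables) are trivial in $G_1$ (resp.\ in $G_2$); by uniqueness of normal forms, the answers of this algorithm across all $w$ recover the quantifier-free types of $\vec g_1(1)$ and of $\vec g_1(2)$ from that of $\vec g_1$, and symmetrically on the $H$-side.

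The step that needs genuine care — and the only one — is the bookkeeping invoked in the ($\Leftarrow$) direction: making precise that matching quantifier-free types of the syllable-tuples, together with the conventions on where identity syllables sit, force $\vec g_1$ and $\vec g_2$ to share a normal-form shape, hence to be describable by one and the same word-tuple $F$ that can then be applied uniformly to both sides. Everything else is a routine unwinding of the normal form theorem, which is presumably why the authors call the statement clear.
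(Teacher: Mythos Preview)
The paper offers no proof (it declares the lemma ``clear''), so there is nothing to compare your argument against directly. Your treatment of the ($\Leftarrow$) direction is correct and is precisely the direction used for the subsequent lemma on free products.

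The ($\Rightarrow$) direction, however, is actually \emph{false} as stated, and your argument for it has a corresponding gap. For a counterexample, take $G_1=G_2=H_1=H_2=\mathbb{Z}$, let $a$ be a generator of $G_1\subseteq G_1*G_2$ and $d$ a generator of $H_2\subseteq H_1*H_2$, and set $\vec g_1=(a)$, $\vec g_2=(d)$. Both are single elements of infinite order, hence have the same quantifier-free type in their respective free products; but $\vec g_1(1)=(a)$ is nontrivial while $\vec g_2(1)=(e)$ is trivial. The flaw in your argument is the claim that ``the answers of this algorithm across all $w$ recover the quantifier-free types of $\vec g_1(1)$ and of $\vec g_1(2)$ from that of $\vec g_1$'': the quantifier-free type of $\vec g_1$ in the pure group language carries no information about \emph{which} free factor a syllable lies in, so the shape $F$ is not recoverable from it; and even when the shapes happen to coincide (e.g.\ $\vec g_1=(ab)\in(\mathbb{Z}/2)*(\mathbb{Z}/3)$ versus $\vec g_2=(cd)\in(\mathbb{Z}/3)*(\mathbb{Z}/2)$, both single elements of infinite order with a two-syllable normal form) the syllable types need not match. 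Since only the ($\Leftarrow$) implication is invoked downstream, this is a harmless overstatement in the paper rather than a problem for the main results.
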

	
	Using the preceding lemma, we immediately obtain:
	
	\begin{lem}\label{freebaf}
		If $G_i\baf_\alpha H_i$ for $i=1,2$, then $G_1*G_2\baf_\alpha H_1*H_2$.
	\end{lem}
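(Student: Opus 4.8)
The plan is to prove, by transfinite induction on $\alpha$, the following statement that carries tuples through the induction: for all ordinals $\alpha$, all groups $G_1,G_2,H_1,H_2$, and all finite tuples $\vec g=(g_1,\dots,g_n)$ from $G_1*G_2$ and $\vec h=(h_1,\dots,h_n)$ from $H_1*H_2$ such that, for each $j$, the normal forms of $g_j$ and $h_j$ have the same length (so that $\vec g(i)$ and $\vec h(i)$ break into blocks of the same lengths), if $(G_i,\vec g(i))\baf_\alpha (H_i,\vec h(i))$ for $i=1,2$, then $(G_1*G_2,\vec g)\baf_\alpha (H_1*H_2,\vec h)$. Taking $\vec g=\vec h=\varnothing$ recovers Lemma~\ref{freebaf}. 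The base case $\alpha=0$ is exactly the preceding lemma: when the projections into the two free factors have equal quantifier-free types and are structured compatibly, the lemma yields that $\vec g$ and $\vec h$ have equal quantifier-free types in the ambient free products, i.e.\ $(G_1*G_2,\vec g)\baf_0 (H_1*H_2,\vec h)$.

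For $\alpha>0$ it suffices, by symmetry, to verify the ``forth'' clause. Fix $\beta<\alpha$ and a finite tuple $\vec c=(c_1,\dots,c_k)$ from $G_1*G_2$. Write $c_j=\vec c_j(1)\boxplus\vec c_j(2)$ in normal form with $|\vec c_j(1)|=|\vec c_j(2)|=m_j$, and put $M=m_1+\dots+m_k$, so $\vec c(1)\in G_1^M$ and $\vec c(2)\in G_2^M$. Applying the definition of $(G_1,\vec g(1))\baf_\alpha (H_1,\vec h(1))$ with the ordinal $\beta$ and the tuple $\vec c(1)$, we obtain $\vec d^{(1)}\in H_1^M$ with $(G_1,\vec g(1),\vec c(1))\baf_\beta (H_1,\vec h(1),\vec d^{(1)})$; symmetrically we obtain $\vec d^{(2)}\in H_2^M$ with $(G_2,\vec g(2),\vec c(2))\baf_\beta (H_2,\vec h(2),\vec d^{(2)})$. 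Cut $\vec d^{(1)}$ and $\vec d^{(2)}$ into consecutive blocks $\vec d^{(i)}_1,\dots,\vec d^{(i)}_k$ of lengths $m_1,\dots,m_k$. Since $\baf_\beta$-equivalence entails equality of quantifier-free types, the pattern of trivial entries of $\vec c(i)$ is matched by $\vec d^{(i)}$; in particular, within block $j$, all but possibly the first entry of $\vec d^{(1)}_j$ and all but possibly the last entry of $\vec d^{(2)}_j$ are nontrivial. Hence $d_j:=\vec d^{(1)}_j\boxplus\vec d^{(2)}_j$ is a well-defined element of $H_1*H_2$, already in normal form, so $\vec d_j(i)=\vec d^{(i)}_j$; setting $\vec d=(d_1,\dots,d_k)$ we get $\vec d(i)=\vec d^{(i)}$, and $\vec d$ shares the block-length data of $\vec c$.

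Finally, $(\vec g\vec c)(i)=(\vec g(i),\vec c(i))$ and $(\vec h\vec d)(i)=(\vec h(i),\vec d^{(i)})$ as concatenations, so the two relations just obtained say precisely that $(G_i,(\vec g\vec c)(i))\baf_\beta (H_i,(\vec h\vec d)(i))$ for $i=1,2$; moreover $\vec g\vec c$ and $\vec h\vec d$ are still compatibly structured. By the induction hypothesis at $\beta$, $(G_1*G_2,\vec g\vec c)\baf_\beta (H_1*H_2,\vec h\vec d)$, which is exactly what the ``forth'' clause demands. The ``back'' clause is the mirror image with the roles of the $G_i$ and $H_i$ interchanged, so the induction closes.

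The only delicate point — and the thing I would be most careful about — is the normal-form bookkeeping: one must assemble the response $\vec d$ on the $H$-side with exactly the block structure of Spoiler's tuple $\vec c$, and one must check that the reducedness constraints built into the definition of $\boxplus$ come for free. Both follow from the single observation that $\baf_\beta$-equivalence refines equality of quantifier-free types, together with the normal form theorem for free products; granting that, everything else is a routine unwinding of the definition of $\baf_\alpha$ against the preceding lemma.
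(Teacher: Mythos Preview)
Your proof is correct and follows exactly the approach the paper has in mind: the paper simply writes ``Using the preceding lemma, we immediately obtain'' and leaves the transfinite induction implicit, whereas you spell out the induction carefully, including the crucial bookkeeping that the normal-form block lengths are preserved (which is needed to invoke the preceding lemma and the induction hypothesis). Your observation that $\baf_\beta$-equivalence forces the identity-pattern of the factor tuples to match, so that the reassembled response $\vec d$ is already in normal form with the same block structure as $\vec c$, is precisely the detail that makes the ``immediate'' claim work.
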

	
	\begin{remark}
		The version of the previous lemma for elementary equivalence is due to Sela (see \cite[Theorem 7.1]{sela}).
	\end{remark}
	
	Recall that if $\Gamma$ is a finite graph and $G_v$ is a group for each vertex $v$ of $\Gamma$, then the \textbf{graph product of the groups $G_v$ with respect to the graph $\Gamma$} is the group generated by the $G_v$'s subject to the relations that $G_v$ and $G_w$ commute whenever $(v,w)$ is an edge of $\Gamma$.  An argument analogous to that given in Lemma \ref{freebaf} yields:
	
	\begin{lem}
		Suppose that $\Gamma$ is a finite graph and, for each vertex $v\in \Gamma$, $G_v$ and $H_v$ are groups such that $G_v\baf_\alpha H_v$.  Let $G$ and $H$ denote the corresponding graph products with respect to $\Gamma$.  Then $G\baf_\alpha H$.
	\end{lem}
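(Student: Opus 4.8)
The plan is to mirror the proof of Lemma~\ref{freebaf}, replacing the normal form theorem for free products by the normal form theorem for graph products (due to Green). Recall that every element $g$ of the graph product $G=\prod_\Gamma G_v$ admits a \emph{reduced expression} $g=s_1s_2\cdots s_k$ as a product of \emph{syllables} $s_i\in G_{v_i}\setminus\{e\}$, and that any two reduced expressions for the same element differ only by finitely many \emph{shuffles}, i.e.\ transpositions of adjacent syllables $s_is_{i+1}$ for which $(v_i,v_{i+1})$ is an edge of $\Gamma$. In particular the \emph{trace} of $g$ --- the word $v_1v_2\cdots v_k$ taken modulo the congruence generated by commuting adjacent edge-letters --- is well defined, and so is, for each vertex $v$, the ordered tuple $\vec g(v)$ consisting of those syllables of $g$ that lie in $G_v$. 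We extend this to a finite tuple $\vec g=(g_1,\ldots,g_n)$ from $G$ exactly as in the free product case: $\vec g(v)$ is the concatenation of $\vec g_1(v),\ldots,\vec g_n(v)$, and in addition we record the combined combinatorial \emph{shape} of $\vec g$ (the traces of the individual $g_j$ together with the bookkeeping of which syllable of which $g_j$ goes where).

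The first step is the analogue of the auxiliary quantifier-free type lemma preceding Lemma~\ref{freebaf}: if $\vec g$ is a tuple from $G=\prod_\Gamma G_v$ and $\vec h$ a tuple from $H=\prod_\Gamma H_v$ that have the same shape, then $\vec g$ and $\vec h$ have the same quantifier-free type (in the language of groups) if and only if, for every vertex $v$, the tuples $\vec g(v)$ and $\vec h(v)$ have the same quantifier-free type in $G_v$ and $H_v$ respectively. This is immediate from the normal form theorem: deciding whether a group word $w(\vec x)$ evaluates to the identity on $\vec g$ amounts to reducing $w(\vec g)$, and this reduction proceeds by alternately cancelling and multiplying syllables inside a single vertex group --- a computation internal to that $G_v$ and depending only on the quantifier-free type of $\vec g(v)$ there --- and shuffling syllables past one another, a purely combinatorial operation depending only on the shape.

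With this in hand the back-and-forth game is played componentwise. Assuming $G_v\baf_\alpha H_v$ for every $v$, one shows by induction on $\beta\le\alpha$ that whenever $\vec g$ from $G$ and $\vec h$ from $H$ have the same shape and $(G_v,\vec g(v))\baf_\beta(H_v,\vec h(v))$ for every $v$, then $(G,\vec g)\baf_\beta(H,\vec h)$; the base case $\beta=0$ is the lemma just described. For the inductive step, suppose Spoiler plays a tuple $\vec c$ from $G$ (a play in $H$ being symmetric); decompose $\vec c$ together with $\vec g$ into its shape and its vertex-tuples $\vec c(v)$. For each $v$, the strategy witnessing $(G_v,\vec g(v))\baf_\beta(H_v,\vec h(v))$ provides, for the given $\gamma<\beta$, a tuple $\vec d(v)$ from $H_v$ with $(G_v,\vec g(v),\vec c(v))\baf_\gamma(H_v,\vec h(v),\vec d(v))$. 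Reassembling the $\vec d(v)$ according to the \emph{same} shape as $\vec c$ produces a tuple $\vec d$ from $H$ such that $\vec g\vec c$ and $\vec h\vec d$ have the same shape and matching componentwise $\baf_\gamma$-data, so the induction hypothesis gives $(G,\vec g,\vec c)\baf_\gamma(H,\vec h,\vec d)$; taking $\vec g,\vec h$ empty and $\beta=\alpha$ yields $G\baf_\alpha H$. The one genuinely new ingredient compared with the free product case --- and the point requiring the most care --- is the normal form theorem for graph products together with the need to carry the trace/shuffle data along as an isomorphism invariant: in a free product the pattern of syllables is forced to alternate between the two factors, whereas here commuting syllables may be permuted, so the trace monoid structure must be tracked explicitly throughout the reassembly. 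Once that bookkeeping is in place the remaining verifications are routine.
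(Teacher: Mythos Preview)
Your proposal is correct and is precisely the argument the paper has in mind: the paper gives no proof at all beyond the phrase ``an argument analogous to that given in Lemma~\ref{freebaf},'' and what you have written is exactly that analogue, carried out in detail. Your explicit tracking of the trace/shape data is the right way to handle the one point where graph products differ from free products (in the paper's free-product convention the shape is implicitly encoded by the lengths of the component tuples and which boundary entries are the identity, whereas for general graph products this must be recorded separately, as you do).
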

	
	\subsection{Continuous Back-and-Forth Relations}

 We now explain how to define versions of the back-and-forth relations appropriate for metric structures.  Our approach follows closely the approach to Scott analysis for metric structures presented in \cite{metricscott}, the only difference being that we define back-and-forth relations where at each stage players play tuples, whereas in \cite{metricscott} players play single elements. We cite theorems from that paper without giving new proofs where the proofs would be exactly the same except for this difference.\footnote{Indeed our relations are stronger than the relations of \cite{metricscott}, so if the reader prefers, they can consider the metric back-and-forth relations to be as defined in that paper and all of the results will still hold.} %See also \cite{mtms} for background on continuous logic.%\todo{Is this sentence really needed?  We refer to Bradd's survey on continuous logic?  Or we could also just refer to [2] in the intro?}

There are two major differences between the back-and-forth relations in continuous logic and in discrete logic. The first is that in continuous logic, the back-and-forth relations are only approximate. The second difference is that, in order to make things work out properly, there are moduli of continuity involved. We begin by introducing these.
	
		\begin{defn}\label{defn:modulus}
		A \textbf{modulus of arity $n$} is a function $ {\Delta\colon [0,\infty)^n \to [0,\infty)}$ that is:
		\begin{enumerate}
			\item non-decreasing, subadditive, and vanishing at zero, and
			\item continuous.
		\end{enumerate}
	
	Let $\Delta$ be an $n$-ary modulus, $X = \prod_{i=1}^n X_i$ a product of metric spaces, and $f \colon X \to Y$ a function into another metric space. We say that $f$ \textbf{obeys} $\Delta$ if, for all $\vec a,\vec b\in X$, we have
	\[ d(f(\vec a),f(\vec b)) \leq \Delta(d(a_1,b_1), \ldots,d(a_n,b_n)).\]
	In this case, $f$ is uniformly continuous as ``witnessed'' by the modulus $\Delta$.  
 
 For each continuous function $f:K\to \bb R$ with $K$ a product of compact intervals, there is a least modulus obeyed by $f$ given by
	\[ \Delta_f(\delta_1,\ldots,\delta_n) = \sup\{|f(\vec x)-f(\vec y)| \ : \ |x_i-y_i|\leq \delta_i\}.\]
 \end{defn}
	
	\begin{defn}
		A \textbf{weak modulus} is a function ${\Omega:[0,\infty)^\mathbb{N} \to [0,\infty]}$ that is
		\begin{enumerate}
			\item non-decreasing, subadditive, and vanishing at zero,
			\item lower semicontinuous with respect to the product topology, and
			\item continuous in each argument.
		\end{enumerate}
	\end{defn}
	
	If ${\Omega:[0,\infty)^\mathbb{N} \to [0,\infty]}$ is a weak modulus, define its \textbf{truncations} to be the functions $\Omega \res_n : [0,\infty)^n \to [0,\infty)$ given by $\Omega\res_n(\delta_1,\ldots,\delta_n):=\Omega(\delta_1,\ldots,\delta_n,0,0,\ldots)$.  The truncations of $\Omega$ are readily verified to be moduli and $\Omega$ can be recovered from its truncations via the formula
	\[ \Omega(\delta_1,\delta_2,\ldots) = \sup_n \Omega \res_n (\delta_1,\ldots,\delta_{n}).\]

We say that an $n$-ary function $f$ respects $\Omega$ if it respects $\Omega \res_n$.

Before defining the syntax of infinitary continuous logic, it will behoove us to recall how the syntax works in the (usual) finitary case.  Recall that each predicate symbol $P$ in a metric language comes equipped with a modulus $\Delta_P$ and a bound $I_P$, the latter of which is an interval in which $P$ takes its values, and each function symbol $F$ of the language comes equipped with a modulus $\Delta_F$. The terms and formulae are built up as usual and we can keep track of the moduli which they respect. (When we say that a formula respects a particular modulus, one should think of this as meaning that the evaluation map respects the modulus in a formal way not dependent on the structure in which the evaluation is taking place.) 

More precisely, we now construct the terms and basic formulas in the variables $\{ x_i : i < \omega \}$. Consequently, we view our moduli as $\mathbb{N}$-ary moduli.  Throughout, we fix a language $L$ and omit the prefix ``$L$-'' in front of terms and formulae.

\begin{defn}%\todo{I split the definition of term from the definition of basic formula.}
The \textbf{terms} are defined as follows:
\begin{enumerate}
    \item Each variable $x_i$ is a term that respects the $\mathbb{N}$-ary modulus $\Delta_{x_i}(\delta) = \delta_i$.
        \item If $(\tau_i)_{i < n}$ are terms and $F$ is a function of arity $n$, then $\sigma = F(\tau)$ is an $L$-term that respects $\Delta_\sigma = \Delta_F \circ (\Delta_{\tau_i} : i < n)$.
\end{enumerate}
\end{defn}

\begin{defn} The \textbf{basic formulas} are defined as follows:
    \begin{enumerate}
        
        \item If $P$ is a predicate symbol of arity $n$ and $(\tau_i)_{i < n}$ are terms, then $\phi = P(\tau)$ is an atomic formula that respects the modulus $\Delta_\phi = \Delta_P \circ (\Delta_{\tau_i} : i < n)$ and the bound $I_P$.
        \item If $(\phi_i)_{i < n}$ are atomic formulas with modulus of continuity $\Delta_{\phi_i}$ and bounds $I_{\phi_i}$, and $f \colon \prod_{i < n} I_{\phi_i} \to \mathbb{R}$ is continuous, then $\psi = f(\phi_i)$ is a basic formula that respects the modulus $\Delta_\psi = \Delta_f \circ (\Delta_{\phi_i} : i < n)$ and the bound $I_\phi = f(\prod_{i < n} I_{\phi_i})$.
    \end{enumerate}
\end{defn}

All basic formulas $\phi(x_1,\ldots,x_n)$ that only depend on the first $n$ variables and respect $\Omega$ are \textbf{(basic) $\Omega$-formulas}. (We leave until Definition \ref{defn:Omega-formula} what it means for an arbitrary formula to be an $\Omega$-formula.)

The basic formulas are the formulas we check at the base case of the inductive definition of the back-and-forth relations, which we can now define.

\begin{defn}
		Let $\alpha$ be an ordinal, $n \in \mathbb{N}$, $M$ and $N$ $L$-structures, $\vec a \in M^n$, and $\vec b \in N^n$. Fix also a weak modulus $\Omega$.  The \textbf{back-and-forth pseudo-distance of rank $\alpha$ and arity $n$ with respect to $\Omega$}, denoted by $r^{(M,N,\Omega)}_{\alpha,n}$ (or simply $r^{(M,N)}_{\alpha}$ if $\Omega$ and $n$ are understood), is defined as follows:
		\begin{itemize}
			\item For $\alpha = 0$,
			\[ r_0^{M,N} (\vec a, \vec b) = \sup_\phi \; | \phi^M(\vec a) - \phi^N(\vec b)|,\]
			where the supremum is taken over all basic $\Omega$-formulas $\phi$.
			\item For $\alpha > 0$,
			\[ r_\alpha^{M,N}(\vec a,\vec b) = \left[\sup_{\beta < \alpha} \sup_{\vec c \in M} \inf_{\vec d \in N} r_\beta^{M,N}(\vec a \vec c,\vec b \vec d) \right] \vee \left[\sup_{\beta < \alpha} \sup_{\vec d \in N} \inf_{\vec c \in M} r_\beta^{M,N}(\vec a \vec c,\vec b \vec d)\right]. \]
		\end{itemize}
    We also set $r_\alpha(M\vec a,N\vec b):=r_\alpha^{M,N}(\vec a,\vec b)$.
		The pseudo-distances naturally define equivalence relations $\baf_\alpha$:
		\[ (M,\vec a) \baf_\alpha (N,\vec b) \Longleftrightarrow r_\alpha(M\vec a, N\vec b) = 0.\]
   If we want to keep track of the weak modulus $\Omega$, we write $\bafO{\Omega}_\alpha$.
	\end{defn}

 %It is readily verified that, for classical structures, this use of $\baf_\alpha$ agrees with our earlier use, that is, this notion of $\baf_\alpha$ for metric structures generalizes the notion for classical structures. \todo{For any $\Omega$?}
 
	We will need several properties of the functions $r_\alpha$. Recall, as described above, that in our definitions of the back-and-forth relations, we allow players to play tuples (that is, $\vec c$ and $\vec d$ are tuples in the definition above) whereas in \cite{metricscott} they are single elements. Thus the facts cited below do not literally appear in that paper, but the proofs are exactly the same.
 
 \begin{fact}

\

     \begin{enumerate}
         \item For fixed $\Omega$, $\alpha$, and $n$, $r_\alpha$ is a pseudo-distance on the class of all pairs $(M,\vec a)$ with $\vec a \in M^n$.
         \item For fixed $\alpha$, $n$, $M$, and $N$, $r^{M,N}_\alpha:M^n\times N^n\to \bb R$ is uniformly continuous, obeying the modulus $\Omega\res_n$ on each side.
         \item If $\beta < \alpha$, then $r_{\beta,n} \leq r_{\alpha,n}$ for all $n$.  In particular, $M \baf_\alpha N$ implies $M \baf_\beta N$.
     \end{enumerate}
 \end{fact}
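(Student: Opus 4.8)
The plan is to treat parts (1) and (2) by transfinite induction on $\alpha$, following the template of \cite{metricscott} and checking that nothing breaks when Spoiler and Duplicator are allowed to play tuples rather than single elements; part (3) needs no induction at all. For (3): since the empty tuple is an admissible move, for any $\beta<\alpha$ we may take $\vec c$ empty (and hence $\vec d$ empty) in the first bracket of the definition of $r_\alpha$, which already gives $r_\alpha^{M,N}(\vec a,\vec b)\ge r_\beta^{M,N}(\vec a,\vec b)$; taking the supremum over $\beta<\alpha$ yields $r_\alpha\ge\sup_{\beta<\alpha}r_\beta$, which is stronger than what is asked. The ``in particular'' is then immediate, since $0\le r_\beta\le r_\alpha$ and $r_\alpha(M\varnothing,N\varnothing)=0$ force $r_\beta(M\varnothing,N\varnothing)=0$.

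For (1), I would check the pseudometric axioms for $r_\alpha$ one at a time. Non-negativity is immediate from the definition. Reflexivity $r_\alpha((M,\vec a),(M,\vec a))=0$ follows by induction: in the inductive step, respond to each $\vec c$ with $\vec d=\vec c$ and invoke the inductive hypothesis, so each inner infimum is $0$. Symmetry follows by induction because the definition of $r_\alpha$ is manifestly invariant under interchanging the two structures once symmetry is known at all ranks $\beta<\alpha$ (the two brackets simply swap). The only axiom requiring real work is the triangle inequality $r_\alpha((M,\vec a),(P,\vec e))\le r_\alpha((M,\vec a),(N,\vec b))+r_\alpha((N,\vec b),(P,\vec e))$: fixing $\beta<\alpha$, a tuple $\vec c\in M$, and $\varepsilon>0$, one first chooses $\vec d\in N$ with $r_\beta(\vec a\vec c,\vec b\vec d)\le r_\alpha((M,\vec a),(N,\vec b))+\varepsilon/2$, then chooses $\vec f\in P$ with $r_\beta(\vec b\vec d,\vec e\vec f)\le r_\alpha((N,\vec b),(P,\vec e))+\varepsilon/2$, and applies the inductive triangle inequality at rank $\beta$ to get $\inf_{\vec f}r_\beta(\vec a\vec c,\vec e\vec f)\le r_\alpha((M,\vec a),(N,\vec b))+r_\alpha((N,\vec b),(P,\vec e))+\varepsilon$; letting $\varepsilon\to0$, taking the remaining suprema, and repeating symmetrically for the second bracket finishes this case. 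Keeping $|\vec d|=|\vec c|$ and $|\vec f|=|\vec d|$ throughout is what keeps all of these relations well-defined.

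For (2), I would induct on $\alpha$ while proving the sharper statement that for fixed $\vec b$ the map $\vec a\mapsto r_\alpha^{M,N}(\vec a,\vec b)$ obeys $\Omega\res_n$ (and symmetrically in $\vec b$); the two-sided estimate then follows by a triangle inequality in $\bR$. The base case uses that every basic $\Omega$-formula $\phi(x_1,\dots,x_n)$ respects $\Omega$, so $|\phi^M(\vec a)-\phi^M(\vec a')|\le\Omega\res_n(d(a_1,a_1'),\dots,d(a_n,a_n'))$, and this bound passes through the supremum defining $r_0$. For the inductive step, given $\vec c\in M^m$ and $\varepsilon>0$ pick a near-optimal $\vec d\in N^m$ for $\vec a\vec c$; the inductive hypothesis at rank $\beta<\alpha$ and arity $n+m$ bounds $|r_\beta(\vec a\vec c,\vec b\vec d)-r_\beta(\vec a'\vec c,\vec b\vec d)|$ by $\Omega\res_{n+m}(d(a_1,a_1'),\dots,d(a_n,a_n'),0,\dots,0)$, and the \emph{crucial point} is that this equals $\Omega\res_n(d(a_1,a_1'),\dots,d(a_n,a_n'))$, since padding a truncation of $\Omega$ with zeros returns a shorter truncation. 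Hence the perturbation estimate does not deteriorate as the played tuples grow; taking infima over $\vec d$, suprema over $\vec c$ and $\beta$, and the maximum of the two brackets recovers that $\vec a\mapsto r_\alpha(\vec a,\vec b)$ obeys $\Omega\res_n$.

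I expect (2) to be the one place where passing from single elements to tuples could a priori cause trouble: a naive modulus of continuity would degrade each time Spoiler lengthens the played tuple, and the reason the argument survives is exactly the weak-modulus axioms, which guarantee that all truncations $\Omega\res_k$ are moduli and agree after zero-padding. This is precisely the feature \cite{metricscott} builds in, so the proofs really are ``the same'' as there; the $\varepsilon$-bookkeeping in the triangle inequality of (1) is the only other spot needing any care, and it is routine.
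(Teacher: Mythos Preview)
Your proposal is correct and matches the paper's approach: the paper does not give its own proof of this Fact but simply cites \cite{metricscott}, noting that ``the proofs are exactly the same'' once one allows tuples in place of single elements; what you have written is precisely the verification that those arguments go through unchanged, with the key observation (which you isolate correctly) that $\Omega\res_{n+m}(\delta_1,\dots,\delta_n,0,\dots,0)=\Omega\res_n(\delta_1,\dots,\delta_n)$ prevents the modulus from degrading as tuples grow.
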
 
 %(Recall that a pseudo-distance is like a metric except that it might have $r(x,y) = 0$ when $x \neq y$.)

	Given that the back and forth relations depend on the choice of a weak modulus, a natural question arises:  Which weak modulus should we use? There are two of particular interest. First, the \textbf{1-Lipschitz weak modulus} is defined by
	\[ \Omega_L(\vec \delta) = \sup_i \delta_i.\]
	Second, the \textbf{universal modulus} is defined by
	\[ \Omega_U(\vec \delta) = \sum_{i=1}^\infty i \cdot \sup_{k \leq i} \Delta_{\phi_k}(\delta_i,\ldots,\delta_i),\]
	where $(\phi_i)_{i \in \mathbb{N}}$ lists all of the atomic formulas. The importance of the universal weak modulus is that with it one can do Scott analysis:
 
 \begin{thm}[Theorem 5.5 of \cite{metricscott}]
     Given separable $M,N$, $M \cong N$ if and only if $M \bafO{\Omega_U}_\alpha N$ for all $\alpha < \omega_1$.
 \end{thm}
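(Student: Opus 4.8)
The plan is to prove the two implications separately. The forward direction is a routine transfinite induction: if $\theta\colon M\to N$ is an isomorphism, then $r^{M,N}_\alpha(\vec a,\theta\vec a)=0$ for every tuple $\vec a$ from $M$, proved by induction on $\alpha$. The base case holds because an isomorphism preserves the values of all basic formulas (in particular all basic $\Omega_U$-formulas), and at a successor or limit stage one answers Spoiler's move $\vec c\in M$ (resp.\ $\vec d\in N$) with $\theta\vec c$ (resp.\ $\theta^{-1}\vec d$) and invokes the inductive hypothesis for every $\beta<\alpha$. Hence $M\bafO{\Omega_U}_\alpha N$ for all $\alpha<\omega_1$. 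All the content is in the reverse direction, which is a Scott-style back-and-forth argument adapted to the metric setting.

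So assume $M$ and $N$ are separable and $M\bafO{\Omega_U}_\alpha N$ for all $\alpha<\omega_1$. The first step is a stabilization lemma: there is a countable ordinal $\gamma$ with $r^{M,N}_{\gamma,n}=r^{M,N}_{\gamma+1,n}$ as functions on $M^n\times N^n$ for every $n$. Indeed, for each fixed $n$, $\vec a\in M^n$ and $\vec b\in N^n$ the map $\alpha\mapsto r^{M,N}_{\alpha,n}(\vec a,\vec b)$ is non-decreasing and bounded, hence eventually constant, since there is no strictly increasing $\omega_1$-indexed sequence of reals. Fixing countable dense sets $D\subseteq M$ and $E\subseteq N$, there are only countably many triples $(\vec a,\vec b,n)$ with $\vec a\in D^n$ and $\vec b\in E^n$, so the stabilization ordinals of these triples have a countable supremum $\gamma$. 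Using the uniform continuity of the $r^{M,N}_{\alpha,n}$ (they obey $\Omega_U\res_n$ on each side), equality on dense tuples upgrades to equality everywhere, and the suprema and infima over $M$ and $N$ in the recursive clause may be computed over $D$ and $E$; this yields $r^{M,N}_\gamma=r^{M,N}_{\gamma+1}$ on all of $M^n\times N^n$ (and then on all later levels).

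The second step is the back-and-forth construction. From $r^{M,N}_\gamma=r^{M,N}_{\gamma+1}$ one gets that whenever $r^{M,N}_\gamma(\vec a,\vec b)=0$, unwinding the recursion at $\beta=\gamma<\gamma+1$ gives $\sup_{\vec c\in M}\inf_{\vec d\in N}r^{M,N}_\gamma(\vec a\vec c,\vec b\vec d)=0$ together with its mirror image. Starting from $r^{M,N}_\gamma(\varnothing,\varnothing)=0$, I would build sequences $(a_k)$ dense in $M$ and $(b_k)$ dense in $N$ such that $a_k\mapsto b_k$ extends to a surjective isometry $\Theta\colon M\to N$, alternately forcing the $M$-side to run through a dense set of $M$ and the $N$-side through a dense set of $N$. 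Since $r^{M,N}_\gamma=0$ implies $r^{M,N}_0=0$, and $\Omega_U$ is designed to dominate the modulus of every atomic formula, the tuples matched by $\Theta$ agree on all atomic formulas---in particular on $d(x_1,x_2)$, so $\Theta$ really is an isometry---and continuity then propagates this to agreement on all formulas, so $\Theta$ is an isomorphism of $L$-structures. This is precisely where the universal modulus is used: it guarantees that the rank-$0$ relation pins down the values of all atomic formulas, which a coarser weak modulus such as $\Omega_L$ need not do.

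The hard part is the back-and-forth construction itself. The infimum $\inf_{\vec d\in N}r^{M,N}_\gamma(\vec a\vec c,\vec b\vec d)$ need not be attained in the complete but generally non-compact space $N$, so one cannot form $\Theta$ as a union of exact finite partial isomorphisms. Instead one runs the construction with controlled, summable errors: at stage $k$ one matches a new coordinate with error $<2^{-k}$ while perturbing the previously chosen coordinates by at most $2^{-k}$ (legitimate by the uniform continuity of $r^{M,N}_\gamma$), so that each coordinate sequence is Cauchy, and then one verifies---once more using continuity of $r^{M,N}_\gamma$---that the limit tuples still satisfy $r^{M,N}_\gamma=0$. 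This limiting argument, rather than any single clever step, is where the real work lies; otherwise the proof follows the template of Scott's classical theorem. See \cite{metricscott} for the full details.
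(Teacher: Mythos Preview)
The paper does not prove this statement; it is quoted as Theorem~5.5 of \cite{metricscott} and invoked without argument, so there is no in-paper proof to compare your proposal against. Your sketch is a faithful outline of the metric Scott analysis carried out in \cite{metricscott}: the forward direction is trivial, and for the converse you correctly isolate (i) the stabilization ordinal $\gamma$ obtained by a separability-plus-uniform-continuity argument, (ii) the resulting self-improvement $r_\gamma=r_{\gamma+1}$ that drives the back-and-forth, (iii) the genuinely metric obstacle that the infima in the recursion need not be attained, forcing an approximate back-and-forth with summable errors and a passage to Cauchy limits, and (iv) the specific role of $\Omega_U$, namely that $r_0=0$ then pins down the values of \emph{all} atomic formulas so that the limit bijection is an isomorphism. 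Since you already point the reader to \cite{metricscott} for the details, this is an appropriate level of sketch for a result the present paper merely cites.
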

 
 If the language is $1$-Lipschitz, then in the previous theorem one can use $\Omega_L$ instead. In \cite{metricscott}, there is some discussion of what equivalence relations other than isomorphism are characterised by other weak moduli.

Finally, to end this section, we want to know (for appropriate weak moduli $\Omega$) that $M \bafO{\Omega}_\omega N$ implies that $M$ and $N$ are elementarily equivalent. More generally, $M \bafO{\Omega_U}_\alpha N$ should say something about the structures satisfying the same infinitary sentences of the appropriate complexity.

First, we must build up general $\mc{L}_{\omega_1, \omega}$ formulas. We do so in a strict way allowing only 1-Lipschitz connectives and quantification over only the last variable in a subformula.

 \begin{defn}\label{defn:Omega-formula}
		Given a weak modulus $\Omega$ and a compact interval $I$, the \textbf{infinitary $n$-ary $(\Omega,I)$-formulae} are defined as follows:
		\begin{enumerate}
			\item All basic formulas $\phi(x_1,\ldots,x_n)$ that only depend on the first $n$ variables and respect $\Omega$ and $I$ are $n$-ary $(\Omega,I)$-formulas.
			\item If $\{\phi_i : i \in \mathbb{N}\}$ are $n$-ary $(\Omega,I)$-formulas, then $\bigdoublevee_i \phi_i$ and $\bigdoublewedge_i \phi_i$ are $n$-ary $(\Omega,I)$-formulas.
			\item If $\phi$ is an $(n+1)$-ary $(\Omega,I)$-formula, then $\inf_{x_{n+1}} \phi$ and $\sup_{x_{n+1}} \phi$ are $n$-ary $(\Omega,I)$-formula.
			\item If $\phi_1,\ldots,\phi_k$ are $n$-ary $(\Omega,I)$-formulas and $f:\mathbb{R}^k \to \mathbb{R}$ is a 1-Lipschitz function (with respect to the max distance on $\mathbb{R}^k$), then $f(\phi_1,\ldots,\phi_k)$ is an $n$-ary  $(\Omega,f(I^k))$-formula.
		\end{enumerate}
		An \textbf{$n$-ary $\Omega$-formula} is an $n$-ary $(\Omega,I)$-formula for some $I$, and an \textbf{$\Omega$-formula} is an $n$-ary $\Omega$-formula for some $n$. An \textbf{$\Omega$-sentence} is a $0$-ary $\Omega$-formula.
	\end{defn}

    Note that this definition is quite strict.  For example, we are only allowed to quantify over the variable with the largest index. If $\Omega$ is symmetric (as is the case with the Lipschitz modulus $\Omega_L$), then we can actually quantify over any variable. The universal modulus is so-called because of the following fact:
 
    \begin{fact}
        In a countable language, with respect to the universal weak modulus $\Omega_U$, every $\mc{L}_{\omega_1,\omega}$-sentence is equivalent to an $\Omega_U$-sentence (but the quantifier rank might increase).
    \end{fact}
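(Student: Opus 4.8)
The plan is to prove, more generally and by induction on the construction of an $\mc{L}_{\omega_1,\omega}$-formula $\phi(x_1,\ldots,x_n)$, that $\phi$ is equivalent to some $n$-ary $\Omega_U$-formula (of possibly larger quantifier rank); the statement about sentences is then the case $n=0$. There are exactly three features in which the definition of an $\Omega_U$-formula is stricter than that of a general $\mc{L}_{\omega_1,\omega}$-formula, and these give the three nontrivial steps: the base clause must use only formulas respecting $\Omega_U$; connectives must be $1$-Lipschitz (with respect to the max metric); and quantification may only be over the variable of largest index. The steps for the infinitary connectives $\bigdoublevee$ and $\bigdoublewedge$ require nothing, since one just forms the countable disjunction or conjunction of the $\Omega_U$-formulas produced by the inductive hypothesis, and this does not change the quantifier rank.

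For the base clause, the point is essentially that $\Omega_U$ was cooked up to dominate the modulus of every basic formula: the weight $i$ and the supremum $\sup_{k\le i}$ in the definition of $\Omega_U$ are there to guarantee that $\Omega_U\res_n$ bounds the least modulus of any basic formula in the variables $x_1,\ldots,x_n$, so every such formula is already a basic $\Omega_U$-formula; this is the metric counterpart of the trivial classical fact that atomic formulas are $\Sigma_0$, and is implicit in the construction of the universal modulus in \cite{metricscott}. For a continuous connective $\phi=f(\psi_1,\ldots,\psi_k)$ that is not itself $1$-Lipschitz, where by induction each $\psi_i$ may be taken to be an $\Omega_U$-formula valued in a compact interval $I_i$, I would use the standard approximation of a uniformly continuous function on a compact box by $1$-Lipschitz functions: one chooses $1$-Lipschitz $g_m$ with $g_m\uparrow f$ pointwise (for instance by inf-convolution, after rescaling the arguments by $1$-Lipschitz maps), so that $\phi$ is equivalent to $\bigdoublevee_m g_m(\psi_1,\ldots,\psi_k)$, each disjunct of which is an $\Omega_U$-formula by the $1$-Lipschitz-connective clause; dually one can approximate from above with a $\bigdoublewedge$. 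This step again introduces no quantifiers.

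The one delicate step --- and the origin of the parenthetical remark that the quantifier rank might increase --- is $\inf_{x_j}$ or $\sup_{x_j}$ when $x_j$ is not the variable of largest index. The case $j=n+1$ is immediate. In general one cannot merely relabel the variables of an $\Omega_U$-formula and remain an $\Omega_U$-formula, because $\Omega_U$ is not symmetric: a relabeled formula respects only $\Omega_U$ with its arguments correspondingly permuted, and $\Omega_U$ does not dominate its own permutations. The plan is instead to substitute a fresh highest-index variable $x_{n+1}$ for $x_j$ throughout $\psi$ before quantifying --- substituting a higher-indexed variable for a lower-indexed one only enlarges the modulus in a direction that $\Omega_U$ already controls, precisely because the weights $i$ increase --- and then to eliminate the resulting displacement of the remaining variables by a further nested block of auxiliary quantifiers, checking at every stage that the relevant truncation $\Omega_U\res_m$ still dominates. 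It is these auxiliary quantifiers that raise the quantifier rank. The underlying argument is conceptually light, essentially transcribing the single-variable version of \cite{metricscott}; accordingly I expect the main obstacle to be purely the modulus bookkeeping --- verifying that each of the rewrites above genuinely lands back in the class of $\Omega_U$-formulas of the correct arity --- which works only because $\Omega_U$ is large enough to absorb all of these manipulations.
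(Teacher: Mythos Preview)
The paper does not give a proof of this statement: it is recorded as a \emph{Fact}, imported from \cite{metricscott}, and is used as a black box (for instance in the proof of Corollary~\ref{cor:cts-equiv}). So there is nothing in the paper to compare your argument against line by line; what you have written is an attempted reconstruction of the proof from \cite{metricscott}.

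As a sketch, your outline hits the right three pressure points --- basic formulas, non-Lipschitz connectives, and quantification over a non-maximal variable --- and your treatments of the first two are essentially correct. The base case is indeed the design principle behind $\Omega_U$: each atomic formula $\phi_j$ appears in the enumeration, and the term $j\cdot\Delta_{\phi_j}(\delta_j,\ldots,\delta_j)$ (together with monotonicity) ensures $\Delta_{\phi_j}\le\Omega_U$. For a general continuous connective $f$ on a compact box, your approximation by $1$-Lipschitz $g_m\uparrow f$ (and dually $g_m\downarrow f$) and rewriting $f(\vec\psi)$ as $\bigdoublevee_m g_m(\vec\psi)$ is the standard maneuver and introduces no quantifiers.

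The one place where your sketch is genuinely incomplete is the quantifier step. You correctly observe that substituting a higher-indexed variable for a lower-indexed one is harmless at the level of basic formulas, because the weights in $\Omega_U$ are increasing. But the formula $\psi$ to which you want to apply this substitution is in general a compound $\Omega_U$-formula, built via clauses (2)--(4) of Definition~\ref{defn:Omega-formula}, and may already contain bound occurrences of $x_{n+1},x_{n+2},\ldots$; a naive substitution of $x_{n+1}$ for $x_j$ will collide with these. You gesture at fixing this with ``a further nested block of auxiliary quantifiers,'' but you do not say what that block is or why it preserves the $\Omega_U$-modulus constraint at every stage of the recursion. This is exactly the bookkeeping that \cite{metricscott} carries out, and it is where the quantifier-rank increase actually occurs; your write-up would need to make this explicit (for instance, by proving a substitution lemma for $\Omega_U$-formulas by induction on their construction, renaming bound variables upward as needed) before it counts as a proof rather than a plan.
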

    
    \begin{fact}
        In a countable language, with respect to the Lipschitz modulus $\Omega_L$, any formula of the finitary logic $\mc{L}_{\omega,\omega}$ can be uniformly approximated by a Lipschitz formula, whence the values of the Lipschitz formulae determine the values of all finitary formulae.
    \end{fact}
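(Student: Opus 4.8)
The plan is to prove the approximation statement by induction on the construction of a finitary $\mc{L}_{\omega,\omega}$-formula $\varphi(x_1,\dots,x_n)$, showing that for every $\varepsilon>0$ there are an $\Omega_L$-formula $\psi$ and a scalar $C>0$ with
\[ \sup_{M}\,\sup_{\vec a}\,\bigl|\varphi^{M}(\vec a)-C\,\psi^{M}(\vec a)\bigr|<\varepsilon, \]
where the suprema range over all $L$-structures $M$ and all tuples $\vec a$ from the (bounded) sorts on which $\vec x$ lives. The scalar $C$ cannot in general be taken to be $1$ (a connective such as $x\mapsto 2x$ already forces $C\geq 2$), but it is harmless for the final clause, since the value of $\psi$ determines the value of $C\psi$. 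Recall that $\mc{L}_{\omega,\omega}$ is generated from atomic formulas by the finitary continuous connectives and the quantifiers $\inf_x,\sup_x$; so there are three inductive cases, and in each the substance is a routine approximation fact together with bookkeeping of moduli.

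For an atomic formula $\varphi=P(\tau_1,\dots,\tau_k)$, the evaluation $\vec a\mapsto\varphi^M(\vec a)$ obeys a single modulus $\Delta$ read off from the language (assembled from $\Delta_P$ and the $\Delta_{\tau_i}$), uniformly in $M$. If $\Delta\leq C\cdot\Omega_L$ on the relevant range for some finite $C$ --- automatic when the language is $1$-Lipschitz --- then $C^{-1}\varphi$ is already an $\Omega_L$-formula. In general I would pass to an inf-convolution (Moreau envelope) written as a quantifier: for each $C$, the formula $\inf_{\vec z}\bigl[\,\varphi(\vec z)+C\cdot\max_i d(x_i,z_i)\,\bigr]$ is $C$-Lipschitz in $\vec x$ for the max metric, hence $C^{-1}$ times it is an $\Omega_L$-formula, and it converges to $\varphi(\vec x)$ uniformly as $C\to\infty$ at a rate controlled by $\sup_{t\geq 0}\bigl(\Delta(t)-Ct\bigr)$, which depends only on $\Delta$ and so is uniform over all $M$. (This step raises quantifier rank, consistent with the warning in the companion Fact for $\Omega_U$.)

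For a connective $\varphi=f(\varphi_1,\dots,\varphi_k)$ with $f\colon\mathbb{R}^k\to\mathbb{R}$ continuous and the $\varphi_i$ already approximated, I would first uniformly approximate $f$ on the compact box $\prod_i I_{\varphi_i}$ by a Lipschitz function --- Lipschitz functions are dense in $C\bigl(\prod_i I_{\varphi_i}\bigr)$, e.g.\ by mollification or by Stone--Weierstrass --- then rescale that function to a $1$-Lipschitz connective and compose with appropriately rescaled inductive approximants of the $\varphi_i$, all scalar factors merging into a single $C$. For a quantifier $\varphi=\inf_{x_j}\varphi_0$ (and symmetrically for $\sup$), if $C\psi_0$ approximates $\varphi_0$ with $\psi_0$ an $\Omega_L$-formula, then $C\inf_{x_j}\psi_0$ approximates $\varphi$, because $\inf$ and $\sup$ neither increase a Lipschitz constant nor disturb uniform convergence; and $\inf_{x_j}\psi_0$ is again an $\Omega_L$-formula because $\Omega_L$ is symmetric, so by the discussion after Definition \ref{defn:Omega-formula} quantification need not be confined to the last variable.

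The final clause is then immediate: if $M\bafO{\Omega_L}_\omega N$, then (by the continuous analogue of Theorem \ref{thm:transfer}) $M$ and $N$ assign equal values to every $\Omega_L$-formula of finite rank at tuples matched by the back-and-forth relation, so for any finitary sentence $\sigma$ and any $\varepsilon>0$, picking $C\psi$ with $|\sigma^M-C\psi^M|,|\sigma^N-C\psi^N|<\varepsilon$ and using $\psi^M=\psi^N$ gives $|\sigma^M-\sigma^N|<2\varepsilon$; letting $\varepsilon\to 0$ yields $M\equiv N$. The point requiring the most care throughout is exactly this modulus-and-scalar accounting: ensuring the Lipschitz constant stays finite at each inductive step and, crucially, that every approximation rate depends only on the formula and the language and not on the structure, so the resulting estimate is genuinely uniform over all $M$ --- the inf-convolution move in the atomic case being what secures uniformity when the language is not already $1$-Lipschitz.
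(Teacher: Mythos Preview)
The paper does not supply a proof of this Fact; it is stated as background (drawn from \cite{metricscott}), so there is no ``paper's own proof'' to compare against. Your inductive scheme is the natural one, and the connective and quantifier steps are correct as written: rescaling a Lipschitz connective to a $1$-Lipschitz one and absorbing the factor into $C$ is exactly right, and $\inf/\sup$ preserve both the $\Omega_L$ property and uniform closeness.

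There is, however, a genuine gap in your atomic case for languages whose moduli are not dominated by a linear function. Your inf-convolution $\inf_{\vec z}\bigl[\varphi(\vec z)+C\max_i d(x_i,z_i)\bigr]$ is indeed $C$-Lipschitz \emph{as a function} of $\vec x$, but to conclude that (a scalar multiple of) it is an $\Omega_L$-formula you must apply clause~(3) of Definition~\ref{defn:Omega-formula}, and that clause requires the inner formula---before quantifying out $\vec z$---to already be an $\Omega_L$-formula in all its free variables, including $\vec z$. In $\vec z$ the inner expression has modulus at least $C^{-1}\Delta_\varphi(\delta)+\delta$ (after your division by $C$), which exceeds $\delta$ whenever $\Delta_\varphi$ is nonzero; if $\Delta_\varphi$ is something like $\sqrt{\delta}$, no finite rescaling fixes this. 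So the Moreau-envelope trick, while analytically sound, does not produce an $\Omega_L$-formula under the paper's syntactic definition. Your scaling argument \emph{does} go through whenever $\Delta_\varphi\le K\cdot\Omega_L$ for some $K$ (in particular for the tracial von Neumann algebra language, where all moduli are Lipschitz on each bounded sort), which is all the paper actually needs for Corollary~\ref{cor:cts-equiv}; but as a proof of the Fact in the generality stated, the atomic step is incomplete.
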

 
	There is a notion of quantifier rank in \cite{metricscott}, though as far as we are aware the definition of $\Sigma_\alpha$ or $\Pi_\alpha$ formulas does not yet appear in the literature. The continuous equivalent of Theorem \ref{thm:transfer} seems straightforward to prove, though this is somewhat outside of the scope of this article. Instead, we will simply cite the following theorem, which is \cite[Theorem 3.5]{metricscott}:
	
	\begin{fact}
		Let $\alpha$ be an ordinal. Then
		\[ r_\alpha^{M,N}(\vec a,\vec b) \geq \sup |\phi^M(\vec a)-\phi^N(\vec b)|,\]
		where $\phi$ varies over all $n$-ary $\Omega$-formulas of quantifier rank at most $\alpha$.\footnote{In \cite{metricscott}, the theorem was stated as $r_\alpha^{M,N}(\vec a,\vec b) = \sup |\phi^M(\vec a)-\phi^N(\vec b)|$ since their back-and-forth relation, where players only play single elements rather than tuples, match exactly with their notion of quantifier-complexity.}
		
		In particular, if $(M,\vec a) \baf_\alpha (N,\vec b)$, then $\phi^M(\vec a) = \phi^N(\vec b)$ for all $n$-ary $\Omega$-formulas $\phi$ of quantifier rank at most $\alpha$.
	\end{fact}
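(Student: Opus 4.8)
The plan is to fix the weak modulus $\Omega$ and the structures $M,N$, and to prove the displayed inequality
\[ r_\alpha^{M,N}(\vec a,\vec b) \;\ge\; |\phi^M(\vec a)-\phi^N(\vec b)| \]
for every $n$, every $\vec a\in M^n$, $\vec b\in N^n$, and every $n$-ary $\Omega$-formula $\phi$ of quantifier rank at most $\alpha$, by a double induction: an outer transfinite induction on $\alpha$, and for fixed $\alpha$ an inner induction on the construction of $\phi$ in the sense of Definition \ref{defn:Omega-formula}. For the base case $\alpha=0$ the relevant formulas are those of quantifier rank $0$, namely the ones built from basic $\Omega$-formulas by $1$-Lipschitz connectives and countable conjunctions/disjunctions; for a basic $\Omega$-formula the inequality is immediate since $r_0^{M,N}(\vec a,\vec b)$ is by definition the supremum of $|\psi^M(\vec a)-\psi^N(\vec b)|$ over basic $\Omega$-formulas $\psi$, and the other two clauses propagate using the elementary bounds $|f(\vec s)-f(\vec t)|\le \max_i|s_i-t_i|$ for $1$-Lipschitz $f$ and $|\inf_i s_i-\inf_i t_i|\le\sup_i|s_i-t_i|$ (and its dual for suprema).

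For the inductive step assume the statement for all $\beta<\alpha$. When $\phi$ is basic, or of the form $f(\phi_1,\dots,\phi_k)$ with $f$ $1$-Lipschitz, or a countable conjunction $\bigdoublewedge_i\phi_i$ or disjunction $\bigdoublevee_i\phi_i$ of $\Omega$-formulas of quantifier rank at most $\alpha$, the bound follows exactly as in the base case from the inner induction, together with the monotonicity $r_\beta\le r_\alpha$ for $\beta<\alpha$ used to absorb subformulas of smaller rank. The substantive case is the quantifier case, say $\phi=\inf_{x_{n+1}}\psi$ with $\psi$ an $(n+1)$-ary $\Omega$-formula whose quantifier rank $\beta$ is strictly below $\alpha$. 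Here one proves the two inequalities $\inf_{c\in M}\psi^M(\vec a\,c)\le\inf_{d\in N}\psi^N(\vec b\,d)+r_\alpha^{M,N}(\vec a,\vec b)$ and $\inf_{d\in N}\psi^N(\vec b\,d)\le\inf_{c\in M}\psi^M(\vec a\,c)+r_\alpha^{M,N}(\vec a,\vec b)$ separately: in each case one fixes a witness on one side, uses the corresponding clause of the recursive definition of $r_\alpha^{M,N}$ to obtain, for every $\varepsilon>0$, a matching witness on the other side with $r_\beta^{M,N}(\vec a\,c,\vec b\,d)\le r_\alpha^{M,N}(\vec a,\vec b)+\varepsilon$, then invokes the outer induction hypothesis at rank $\beta$ and arity $n+1$ to convert this into $|\psi^M(\vec a\,c)-\psi^N(\vec b\,d)|\le r_\alpha^{M,N}(\vec a,\vec b)+\varepsilon$, and finally takes infima and lets $\varepsilon\to0$. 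The case $\phi=\sup_{x_{n+1}}\psi$ is identical with infima replaced by suprema. The ``in particular'' clause is then the special case $r_\alpha^{M,N}(\vec a,\vec b)=0$.

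The step I expect to need the most care is the quantifier-rank bookkeeping, and specifically its interaction with the fact that in our version of the games Spoiler moves whole tuples rather than single elements: one must be certain that descending under a single quantifier $\inf_{x_{n+1}}$ or $\sup_{x_{n+1}}$ genuinely lowers the quantifier rank below $\alpha$ (so that the recursion for $r_\alpha^{M,N}$ may be applied), whereas descending under connectives or countable conjunctions does not raise it above $\alpha$. A clean alternative that avoids reproving anything is to quote the single-variable version of the statement, which is precisely \cite[Theorem 3.5]{metricscott}, and to combine it with the observation that the tuple back-and-forth pseudo-distances dominate their single-element counterparts --- $r_\alpha^{M,N,\mathrm{single}}\le r_\alpha^{M,N}$ by a routine induction, since restricting Spoiler to length-one tuples only makes Duplicator's job easier --- so that $r_\alpha^{M,N}(\vec a,\vec b)\ge r_\alpha^{M,N,\mathrm{single}}(\vec a,\vec b)=\sup_\phi|\phi^M(\vec a)-\phi^N(\vec b)|$ over $\Omega$-formulas of quantifier rank at most $\alpha$. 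This route also makes transparent why only an inequality (rather than the equality of \cite{metricscott}) is available here: a matching upper bound would require a syntactic hierarchy calibrated to tuple moves, which, as noted in the text, has not been developed.
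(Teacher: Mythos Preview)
Your proposal is correct, and in fact it is more than the paper itself offers: the paper does not give a proof of this Fact at all but simply cites it as \cite[Theorem~3.5]{metricscott}, relying implicitly (as spelled out in the accompanying footnote and in the earlier remark that ``our relations are stronger than the relations of \cite{metricscott}'') on the observation that the tuple-based pseudo-distances dominate the single-element ones. This is exactly your ``clean alternative'' at the end of the proposal, so on that route you and the paper agree verbatim.

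Your direct double-induction argument is a genuine addition. It is sound: the quantifier-rank bookkeeping works because passing under a single $\inf_{x_{n+1}}$ or $\sup_{x_{n+1}}$ strictly lowers the rank of the subformula below $\alpha$, which is precisely what is needed to invoke the recursive clause for $r_\alpha$ and the outer induction hypothesis at some $\beta<\alpha$, while the connective and $\bigdoublewedge/\bigdoublevee$ clauses preserve the bound via the $1$-Lipschitz and $|\inf_i s_i-\inf_i t_i|\le\sup_i|s_i-t_i|$ estimates exactly as you say. The one cosmetic point worth tightening is that your ``inner induction on the construction of $\phi$'' is really a structural induction restricted to the class of $\Omega$-formulas of quantifier rank at most $\alpha$; making that explicit avoids any appearance of circularity when a $\bigdoublewedge_i\phi_i$ has subformulas of varying ranks, some equal to $\alpha$. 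With that clarified, either route suffices, and your direct argument has the virtue of being self-contained.
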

	
	\begin{cor}\label{cor:cts-equiv}
		If $M \bafO{\Omega}_\omega N$ for $\Omega$ equal to either the 1-Lipschitz weak modulus or the universal modulus, then $M\equiv N$.
	\end{cor}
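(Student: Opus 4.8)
The plan is to combine the immediately preceding Fact — that $(M,\vec a)\baf_\alpha(N,\vec b)$ forces $\phi^M(\vec a)=\phi^N(\vec b)$ for every $\Omega$-formula $\phi$ of quantifier rank at most $\alpha$ — with the two Facts recorded just before it comparing finitary formulae to $\Omega_U$- and $\Omega_L$-formulae. First I would specialize the preceding Fact to $\alpha=\omega$ and $\vec a=\vec b=\varnothing$: from $M\bafO{\Omega}_\omega N$ we get $\phi^M=\phi^N$ for every $\Omega$-sentence $\phi$ of quantifier rank at most $\omega$, in particular for every such sentence of finite quantifier rank. Since, by definition, $M\equiv N$ means $\sigma^M=\sigma^N$ for every finitary $\mc{L}_{\omega,\omega}$-sentence $\sigma$, it suffices to show that the values of the $\Omega$-sentences of quantifier rank at most $\omega$ pin down the values of all finitary sentences.

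For $\Omega=\Omega_U$, I would invoke the Fact that, in a countable language, every $\mc{L}_{\omega_1,\omega}$-sentence is equivalent to an $\Omega_U$-sentence; apply it to a finitary sentence $\sigma$ to get an equivalent $\Omega_U$-sentence $\tau$. The one point needing care is that $\tau$ can be taken to have quantifier rank at most $\omega$: since $\sigma$ is finitary, rewriting its connectives in terms of $1$-Lipschitz connectives (via a Stone--Weierstrass-type argument, possibly at the cost of an infinite combination) and reindexing variables so that only the last variable is ever quantified raises the quantifier rank only by a finite amount, so $\tau$ has finite quantifier rank. Then $\sigma^M=\tau^M=\tau^N=\sigma^N$, as desired.

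For $\Omega=\Omega_L$, I would instead use the Fact that every finitary formula can be uniformly approximated by Lipschitz formulae. Given a finitary sentence $\sigma$ and $\varepsilon>0$, choose a Lipschitz sentence $\psi$ with $|\psi^P-\sigma^P|<\varepsilon$ for every structure $P$; as above, one may take $\psi$ of finite quantifier rank, so $\psi^M=\psi^N$ and hence $|\sigma^M-\sigma^N|\le|\sigma^M-\psi^M|+|\psi^M-\psi^N|+|\psi^N-\sigma^N|<2\varepsilon$. Letting $\varepsilon\to 0$ yields $\sigma^M=\sigma^N$, and therefore $M\equiv N$.

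The main obstacle is exactly the bookkeeping in the two middle steps: verifying that the $\Omega_U$-sentence equivalent to (respectively, the Lipschitz sentences approximating) a \emph{finitary} sentence can be arranged to have quantifier rank at most $\omega$, which requires looking inside the constructions of \cite{metricscott}. Everything else is a direct appeal to the cited Facts; indeed, if one is willing to read ``the values of the $\Omega$-sentences of finite quantifier rank determine the values of all finitary sentences'' off of those Facts directly, the corollary is immediate.
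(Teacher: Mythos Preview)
Your proof is correct and takes essentially the same approach as the paper's: invoke the preceding Fact at $\alpha=\omega$ with empty tuples, then for $\Omega_U$ pass to an equivalent $\Omega_U$-sentence and for $\Omega_L$ use uniform approximation by Lipschitz sentences. The paper's proof is terser and does not explicitly address the quantifier-rank bookkeeping you flag --- it simply writes ``for any Lipschitz sentence $\phi$, we have $\phi^M=\phi^N$'' and similarly for $\Omega_U$ --- so your version is, if anything, more careful than the original on exactly the point you identify as the main obstacle.
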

	\begin{proof}
       
 First suppose that $\Omega = \Omega_L$ is the 1-Lipschitz weak modulus. For any Lipschitz sentence $\phi$, we have $\phi^M = \phi^N$. Since every finitary sentence is approximated by Lipschitz sentences, this implies that $\phi^M = \phi^N$ for all finitary sentences $\phi$.

            Second, suppose that $\Omega = \Omega_U$, the universal modulus. As described above, given any sentence $\phi$, we may find an equivalent $\Omega_U$-sentence $\psi$. Then $M \bafO{\Omega_U}_\omega N$ and so $\psi^{M} = \psi^{N}$. It follows that $\phi^{M} = \phi^{N}$ for all sentences $\phi$.
	\end{proof}

	\section{Group operator algebras}\label{groupOA}

 In this section, we recall the\footnote{``The'' is not really accurate; there are two (usually nonisomorphic) \cstar-algebras associated to a (countable, discrete) group, the reduced and the universal.  In this paper, we will only prove something about reduced group \cstar-algebras, which is why we only mention them here.  We define the universal group \cstar-algebra in Section 5 in connection with an open question.} \cstar-algebra and the tracial von Neumann algebra associated to a group; afterwards, we prove some important lemmas connecting back-and-forth equivalence of groups and elements of the corresponding group von Neumann algebras to be used in the proof of the main theorem in the next section.

	Suppose that $G$ is a group.  Let $\ell^2(G)$ be the Hilbert space formally generated by an orthonormal basis $\delta_h$ for all $h \in G$.  For any $g \in G$, define $u_g$ to be the linear operator on $\ell^2(G)$ determined by $u_g(\delta_h) = \delta_{gh}$ for all $h \in G$.  Notice that $u_g$ is unitary for all $g \in G$ (since $u_g^* = u_g^{-1} = u_{g^{-1}})$ and so $\lambda:G\to \cal U(\ell^2(G))$ given by $\lambda(g):=u_g$ is a unitary representation of $G$, called the \textbf{left regular representation} of $G$.
	
	Recall that the group algebra $\bb C[G]$ consists of formal linear combinations $\sum_{g\in G}c_gg$ with only finitely many nonzero coefficients.  There is a natural $*$-algebra structure on $\bb C[G]$, the addition and multiplication being the obvious ones and the $*$-operation being given by $(\sum_{g\in G}c_gg)^*=\sum_{g\in G}\overline{c_g}g^{-1}$.  $\bb C[G]$ is in fact a unital $*$-algebra with unit $e$, where $e$ denotes the identity of the group.
	
	The left regular representation $\lambda$ of $G$ extends by linearity to a unital $*$-algebra homomorphism $\pi:\bb C[G]\to \cal B(\ell^2(G))$.  We often conflate $\bb C[G]$ with its image under $\pi$.
	
	The \textbf{reduced group \cstar-algebra of $G$}, denoted $C_r^*(G)$, is the closure of $\pi(\bb C[G])$ in the operator norm topology on $\cal B(\ell^2(G))$.  The \textbf{group von Neumann algebra of $G$}, denoted $L(G)$, is the strong operator topology (SOT) closure of $\pi(\bb C[G])$ in $\cal B(\ell^2(G))$.  
	
	$L(G)$ becomes a tracial von Neumann algebra when equipped with the trace $\tr(x):=\langle x\delta_e,\delta_e\rangle$.  In particular, for $x=\sum_{g\in G}c_gu_g\in \bb C[G]$, we have $\tr(x)=c_e$.  As in any tracial von Neumann algebra, the trace induces a norm on $L(G)$ given by $\|x\|_2:=\sqrt{\tr(x^*x)}$.  By the Kaplansky Density Theorem and the Bicommutant Theorem, one then has that the set of elements of $\bb C[G]$ of operator norm at most $1$ is $\|\cdot\|_2$-dense in the unit ball of $L(G)$. 

    We view tracial von Neumann algebras $M$ as metric structures in the following way. First, we have sorts for each of the operator norm balls of $M$ of integer radius, and the restrictions to these norm balls of the algebraic operations of addition, multiplication, scalar multiplication, and adjoint. We have constants $0$ and $1$ in the operator norm unit ball, and the inclusion maps between these balls. The distinguished metric on each sort of $M$ is that induced by the ${\|\cdot\|_2}$-norm. By the Kaplansky Density Theorem, this metric induces the strong operator topology on each of the operator norm balls. See \cite{SurveyGoldbringHart} for more details about this presentation of a tracial von Neumann algebras as metric structures.

 We now move on to some lemmas connecting back-and-forth equivalence and elements in group von Neumann algebras.
 
	\begin{notation}\label{abusive}
	  When presented with a $*$-monomial $p(x_1,\ldots,x_m)$, we may rewrite any factor of the form $(x_i^*)^k$ as $x_i^{-k}$.  The rationale for this notation is that when plugging in a canonical unitary $u_g$ in for the variable $x_i$, we have that $(u_g^*)^k=u_{g^{-k}}\text{``}=\text{''}u_g^{-k}$.  
	\end{notation} 
	
	% \begin{remark}
	% 	Recall that the base case of the symmetric back-and-forth relations is that $(M,\vec a)\baf_0 (N,\vec b)$ if and only if the quantifier-free type of $\vec a$ in $M$ is the same as the questifier-free type of $\vec b$ in $N$. In the case of groups, this can be restated as follows. $(M,\vec a)\baf_0 (N,\vec b)$ if and only if for each word $w(x_1,\ldots,x_n)$, and corresponding substitutions $w(a_{i_1},\ldots,a_{i_n})$ and $w(b_{i_1},\ldots,b_{i_n})$ of $\vec a$ and $\vec b$ into $w$, we have that $w(a_{i_1},\ldots,a_{i_n}) = e$ if and only if $w(b_{i_1},\ldots,b_{i_n}) = e$.
	% \end{remark}
	
	\begin{lem}\label{mainlemma1}
		Suppose that $\vec g$ and $\vec h$ are $n$-tuples from the groups $G$ and $H$ such that $(G,\vec g) \equiv_0 (H,\vec h)$. For each $l=1,\ldots,m$, fix a sequence $b_{1,l},\ldots,b_{p_l,l}$ from $\bb C$.  For each $l=1,\ldots,m$ and $s=1,\ldots,p(l)$, fix $i(s,l)\in \{1,\ldots,n\}$.  Let $\vec y = (y_1,\ldots,y_m)$ and $\vec z = (z_1,\ldots,z_m)$ be the tuples from $\bb C[G]$ and $\bb C[H]$ defined by
		\[ y_\ell = \sum_{s=1}^{p_\ell} b_{s,\ell} u_{g_{i(s,\ell)}}\]
		and
		\[ z_\ell = \sum_{s=1}^{p_\ell} b_{s,\ell} u_{h_{i(s,\ell)}}.\]
		Then for any $*$-polynomial $p(x_1,\ldots,x_m)$, we have $\tr p(\vec y) = \tr p(\vec z)$.
	\end{lem}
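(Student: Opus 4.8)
The plan is to expand $p(\vec y)$ into a linear combination of canonical unitaries and then use the fact that the trace on $L(G)$ reads off the coefficient of $u_e$. First I would fix the $*$-polynomial $p$ and, using Notation~\ref{abusive}, write it as a finite $\bb C$-linear combination $\sum_t c_t\, q_t(x_1,\ldots,x_m)$ of $*$-monomials $q_t$, each a product of variables and their (formal) inverses. Substituting $y_\ell=\sum_{s=1}^{p_\ell} b_{s,\ell} u_{g_{i(s,\ell)}}$ and multiplying out, every term $c_t q_t(\vec y)$ becomes a finite $\bb C$-linear combination of products of operators of the form $u_{g_{i(s,\ell)}}^{\pm1}=u_{g_{i(s,\ell)}^{\pm1}}$; since $u_a u_b=u_{ab}$, each such product collapses to a single canonical unitary $u_{w(\vec g)}$, where $w$ is a group word in the variables $\vec x=(x_1,\ldots,x_n)$ depending only on the combinatorial data (the monomial $q_t$ together with the indices $i(s,\ell)$) and not on the ambient group. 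Collecting terms, I get
\[ p(\vec y)=\sum_{w\in W} d_w\, u_{w(\vec g)}, \]
where $W$ is a finite set of group words and each $d_w\in\bb C$ is a fixed polynomial in the $b_{s,\ell}$'s, both $W$ and the $d_w$ being determined by $p$ and the $i(s,\ell)$ alone.

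Next I would apply the trace. Because $\tr(u_a)=\langle u_a\delta_e,\delta_e\rangle=\langle\delta_a,\delta_e\rangle$ is $1$ when $a=e$ and $0$ otherwise, linearity of the trace gives
\[ \tr p(\vec y)=\sum_{w\in W,\ w(\vec g)=e} d_w. \]
Running the identical computation with $z_\ell=\sum_{s=1}^{p_\ell} b_{s,\ell} u_{h_{i(s,\ell)}}$ in place of $y_\ell$ — which is legitimate precisely because $W$ and the coefficients $d_w$ were extracted from the combinatorial data and are therefore unchanged — yields
\[ \tr p(\vec z)=\sum_{w\in W,\ w(\vec h)=e} d_w. \]
Finally, for any group word $w$ the condition $w(\vec x)=e$ is an atomic formula of the language of groups in the free variables $\vec x$, so the hypothesis $(G,\vec g)\equiv_0(H,\vec h)$ forces $w(\vec g)=e$ if and only if $w(\vec h)=e$; hence the two index sets above coincide and the two sums are equal.

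The one place that needs care is the first paragraph: I must make the expansion $p(\vec y)=\sum_{w\in W}d_w u_{w(\vec g)}$ genuinely uniform in the group, i.e.\ argue that $W$ and the scalars $d_w$ can be read off from $p$, the $b_{s,\ell}$, and the $i(s,\ell)$ once and for all, so that passing from $\vec g$ to $\vec h$ merely replaces each $u_{w(\vec g)}$ by $u_{w(\vec h)}$ term by term. This is a straightforward induction on the length of the $*$-monomials $q_t$, repeatedly applying $u_a u_b=u_{ab}$ and $u_a^*=u_{a^{-1}}$, and it is the only ingredient beyond the trace identity $\tr(u_a)\in\{0,1\}$; no further von Neumann algebra machinery is required.
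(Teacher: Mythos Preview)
Your proof is correct and follows essentially the same approach as the paper's: both reduce to expanding $p(\vec y)$ as a $\mathbb{C}$-linear combination of canonical unitaries indexed by group words in $\vec g$, with coefficients depending only on the combinatorial data, so that the trace is determined by which of these words evaluate to the identity, a condition preserved by $(G,\vec g)\equiv_0(H,\vec h)$. The paper carries out the expansion a bit more explicitly via a multinomial-type formula for a single $*$-monomial, but the content is the same.
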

	\begin{proof}
		Without loss of generality, we may assume that $p$ is a $*$-monomial.  Recalling Notation \ref{abusive}, we write $p(x_1,\ldots,x_m)=a\prod_{j=1}^r (x_{\ell_j})^{k_j}$. Then, expanding using the binomial theorem, we have
		$$p(\vec y)= a\prod_{j=1}^r \left(\sum_{s=1}^{p_{\ell_j}} b_{s,\ell_j} u_{g_{i(s,\ell_j)}}\right)^{k_j}=a\sum_{\substack{(s_{j,k})\\{1 \leq j \leq r,\; 1 \leq k \leq k_j}}} \prod_{j=1}^r \prod_{k=1}^{|k_j|} b_{s_{j,k},\ell_j}^{\epsilon_l}u_{g_{i(s_{j,k},\ell_j)}}^{\epsilon_j},$$
		where $\epsilon_j=+$ if $k_j\geq 0$, $\epsilon_j=-$ if $k_j<0$, $b^+:=b$, $b^-:=\bar b$, $u^+:=u$, and $u^-:=u^{*}$.  Setting $g^+:=g$ and $g^-:=g^{-1}$, the trace of $p(\vec y)$ is clearly determined by which products $\prod_{j=1}^r \prod_{k=1}^{|k_j|} g_{i(s_{j,k},\ell_j)}^{\epsilon_j}$ are the identity or not.  The analogous computations and remarks hold for $p(\vec z)$ as well.
		
		Since  $(G,\vec g) \equiv_0 (H,\vec h)$, we have that $\prod_{j=1}^r \prod_{k=1}^{|k_j|} g_{i(s_{j,k},\ell_j)}^{\epsilon_j}$ is the identity if and only if $\prod_{j=1}^r \prod_{k=1}^{|k_j|} h_{i(s_{j,k},\ell_j)}^{\epsilon_j}$ is the identity. It follows that $\tr p(\vec y) = \tr p(\vec z)$.
	\end{proof}
	
	\begin{lem}\label{mainlemma2}
        Suppose that $\vec g$ and $\vec h$ are $n$-tuples from the groups $G$ and $H$ such that $(G,\vec g) \baf_1 (H,\vec h)$. Fix a sequence $b_{1},\ldots,b_{p}$ from $\bb C$.  For each $s=1,\ldots,p$, fix $i_s\in \{1,\ldots,n\}$.  Let $y$ and $z$ be the elements from $\bb C[G]$ and $\bb C[H]$ defined by
		\[ y = \sum_{s=1}^{p} b_{s} u_{g_{i_s}}\]
		and
		\[ z = \sum_{s=1}^{p_\ell} b_{s} u_{h_{i_s}}.\]
		Then $\|y\|  = \|z\|$.
	\end{lem}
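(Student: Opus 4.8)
The plan is to reduce the operator norm of $y$ (and of $z$) to the traces of the powers of $y^*y$ (resp.\ $z^*z$), which by Lemma~\ref{mainlemma1} already coincide. The key elementary fact is that in any tracial von Neumann algebra, for a positive element $a$ one has $\|a\|=\lim_{k\to\infty}\tr(a^k)^{1/k}$. Indeed, write $a=\int_0^{\|a\|}t\,dE(t)$ by the spectral theorem and set $\mu(B):=\tr(E(B))$, a Borel probability measure on $[0,\|a\|]$. Because the trace is faithful, every relatively open subset of the spectrum of $a$ carries a nonzero spectral projection and hence has positive $\mu$-mass, so $\supp\mu$ equals the spectrum of $a$; in particular $\max(\supp\mu)=\|a\|$. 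Then $\tr(a^k)^{1/k}=\bigl(\int t^k\,d\mu(t)\bigr)^{1/k}$ increases to $\max(\supp\mu)=\|a\|$ as $k\to\infty$. Applying this with $a=y^*y$ gives $\|y\|^2=\|y^*y\|=\lim_k\tr\bigl((y^*y)^k\bigr)^{1/k}$, and likewise $\|z\|^2=\lim_k\tr\bigl((z^*z)^k\bigr)^{1/k}$.

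Next I would note that $(G,\vec g)\baf_1(H,\vec h)$ implies $(G,\vec g)\equiv_0(H,\vec h)$ (apply the back-and-forth condition with the empty tuple), so Lemma~\ref{mainlemma1} applies with $m=1$, $y_1=y$, and $z_1=z$. For each fixed $k$, taking the $*$-polynomial $q(x_1):=(x_1^*x_1)^k$ gives $\tr\bigl((y^*y)^k\bigr)=\tr q(y)=\tr q(z)=\tr\bigl((z^*z)^k\bigr)$.

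Combining the two steps, $\|y\|^2=\lim_k\tr\bigl((y^*y)^k\bigr)^{1/k}=\lim_k\tr\bigl((z^*z)^k\bigr)^{1/k}=\|z\|^2$, and hence $\|y\|=\|z\|$. There is no real obstacle here: the only ingredient beyond Lemma~\ref{mainlemma1} is the standard identity $\|a\|=\lim_k\tr(a^k)^{1/k}$ for positive $a$, whose proof rests on faithfulness of the trace; everything else is a direct application of that lemma. (Note that only the consequence $(G,\vec g)\equiv_0(H,\vec h)$ of the hypothesis is actually needed for this argument.)
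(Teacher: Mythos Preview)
Your proof is correct but takes a genuinely different route from the paper's. The paper argues directly on $\ell^2(G)$: given a finitely supported $\xi$ with $\|y\xi\|>r$, it invokes the hypothesis $(G,\vec g)\baf_1(H,\vec h)$ to produce a tuple $\vec h^*\subseteq H$ matching the support $\vec g^*$ of $\xi$ at the $\baf_0$ level, and then checks that the corresponding $\zeta\in\ell^2(H)$ satisfies $\|z\zeta\|>r$. You instead express the operator norm via the spectral formula $\|y\|^2=\lim_k\tr\bigl((y^*y)^k\bigr)^{1/k}$ and appeal to Lemma~\ref{mainlemma1}. This is cleaner and, as you note, shows that only $(G,\vec g)\equiv_0(H,\vec h)$ is actually needed; the same reasoning gives Lemma~\ref{mainlemma3} immediately, and tracing through the induction in Lemma~\ref{generallemma} one sees that the shift by $1$ in the hypothesis of Theorem~\ref{main1} could then be dropped. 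The paper's approach, by contrast, avoids the spectral theorem and makes visible why an extra back-and-forth round appears natural from its point of view: one must match the support of the test vector $\xi$, which is new data beyond $\vec g$.
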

	\begin{proof}
		Suppose $\|y\|>r$; we show that $\|z\|>r$.  Take an element $\xi\in \ell^2(G)$ with $\|\xi\|\leq 1$ such that $\|y \xi\|> r$.  Without loss of generality, we may suppose that $\xi$ has finite support ${\vec g}^* \subseteq G$, that is, $\xi=\sum_{t\in {\vec g}^*}c_t\delta_t$.  Recall that \[ y\xi=(\sum_{s=1}^{p}b_{s}u_{g_{i_s}})(\sum_{t\in {\vec g}^*}c_t\delta_t)=\sum_{s,t}b_{s}c_t\delta_{g_{i_s}t}.\]
		The norm $\|y \xi\|> r$ of this vector is completely determined by which products $g_{i_s} t$ coincide.
		
		Since $(G,\vec g) \baf_1 (H,\vec h)$, there is ${\vec h}^* \in H$ such that $(G,\vec g {\vec g}^*) \baf_0 (H,\vec h {\vec h}^*)$. Set $\zeta = \sum_{t\in {\vec h}^*}c_t\delta_t \in \ell^2(H)$. Then
		 \[ z \zeta =(\sum_{s=1}^{p_1}b_{s}u_{h_{i_s}})(\sum_{t\in {\vec h}^*}c_t\delta_t)=\sum_{s,t}b_{s}c_t\delta_{h_{i_s}t}.\]
		 Since $(G,\vec g {\vec g}^*) \baf_0 (H,\vec h {\vec h}^*)$, two products $g_{i_s} t$ and $g_{i_{s'}} t'$ coincide if and only if $h_{i_s} t$ and $h_{i_{s'}} t'$ coincide. Thus $\|z \zeta\|> r$.
		 
		 This argument is symmetric, so that we have shown that for any $r$, $\|y\| > r$ if and only if $\|z\| > r$. Hence $\| y \| = \|z\|$.
	\end{proof}
	
	Combining the arguments from the previous two lemmas, we also get the following:
	
		\begin{lem}\label{mainlemma3}
  Working in the context of Lemma \ref{mainlemma1} but assuming that $(G,\vec g)\baf_1 (H,\vec h)$, we have $\|p(\vec y)\| = \| p(\vec z)\|$ for any $*$-polynomial
		% Let $(G,\vec g) \equiv_1 (H,\vec h)$. Let $\vec y = (y_1,\ldots,y_m)$ and $\vec z = (z_1,\ldots,z_m)$ be defined by
		% \[ y_\ell = \sum_{s=1}^{p_\ell} b_{s,\ell} u_{g_{i(s,\ell)}}\in\bb C[G]\]
		% and
		% \[ z_\ell = \sum_{s=1}^{p_\ell} b_{s,\ell} u_{h_{i(s,\ell)}}\in\bb C[G].\]
  $p(x_1,\ldots,x_m)$.
	\end{lem}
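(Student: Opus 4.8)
The plan is to reduce Lemma \ref{mainlemma3} to Lemma \ref{mainlemma2} by exploiting the fact that $p(\vec y)$ is itself a single scalar combination of canonical unitaries indexed by a tuple from $G$. Concretely, I would first expand $p(\vec y)$ exactly as in the proof of Lemma \ref{mainlemma1} (via Notation \ref{abusive} and multiplying out the sums), obtaining
\[ p(\vec y) = \sum_{j=1}^k c_j\, u_{w_j(\vec g)},\]
where each $w_j$ is a fixed word in the letters $x_1^{\pm 1},\ldots,x_n^{\pm 1}$ (depending only on $p$ and the indexing data $i(s,\ell)$), the element $w_j(\vec g)\in G$ is obtained by substituting $\vec g$, and each coefficient $c_j\in\bb C$ depends only on $p$ and the $b_{s,\ell}$. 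The identical formal expansion gives $p(\vec z) = \sum_{j=1}^k c_j\, u_{w_j(\vec h)}$ with the \emph{same} coefficients $c_j$.

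Next, I would set $\vec g' := (w_1(\vec g),\ldots,w_k(\vec g))$ and $\vec h' := (w_1(\vec h),\ldots,w_k(\vec h))$. Since each $w_j$ is a quantifier-free term, the quantifier-free type of $\vec g'\vec c$ in $G$ is determined by that of $\vec g\vec c$, and similarly on the $H$-side; a routine pull-back argument then shows that $(G,\vec g)\baf_1 (H,\vec h)$ implies $(G,\vec g')\baf_1 (H,\vec h')$ (this is also an instance of Lemma \ref{intepretable}). Now $p(\vec y) = \sum_{j=1}^k c_j\, u_{g'_j}$ and $p(\vec z) = \sum_{j=1}^k c_j\, u_{h'_j}$ are precisely of the shape treated in Lemma \ref{mainlemma2}, taking the ambient tuples there to be $\vec g'$ and $\vec h'$, the scalars to be the $c_j$, and the indices $i_s = s$; that lemma then yields $\|p(\vec y)\| = \|p(\vec z)\|$, which is the claim. (If one prefers to avoid the auxiliary tuples $\vec g',\vec h'$, one can instead re-run the test-vector argument of Lemma \ref{mainlemma2} verbatim with $w_j(\vec g)$ in place of $g_{i_s}$, using that each condition ``$w_j(\vec x)\cdot y = w_{j'}(\vec x)\cdot y'$'' is quantifier-free and applying $(G,\vec g)\baf_1(H,\vec h)$ to the finite support of the test vector.)

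I do not expect a genuine obstacle, since both ingredients — the expansion into canonical unitaries and the norm comparison — are already carried out in Lemmas \ref{mainlemma1} and \ref{mainlemma2}. The only points needing a little care are the sign/inverse bookkeeping (so that each $w_j$ really is a word in $g_1^{\pm 1},\ldots,g_n^{\pm 1}$ and the $c_j$ transfer unchanged to the $H$-side), handled as in the proof of Lemma \ref{mainlemma1}, and the elementary observation that $\baf_1$ (indeed $\baf_0$) is preserved under adjoining quantifier-free-definable elements to a tuple.
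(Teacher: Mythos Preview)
Your proposal is correct and matches the paper's approach: the paper simply says ``combining the arguments from the previous two lemmas,'' which is precisely your plan of expanding $p(\vec y)$ into a scalar combination of canonical unitaries as in Lemma~\ref{mainlemma1} and then applying the test-vector norm comparison of Lemma~\ref{mainlemma2}. Your two variants (passing to the definable tuple $\vec g'$ versus re-running the $\ell^2$-vector argument directly with the words $w_j(\vec g)$) are both valid renderings of that combination.
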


	\section{The main result}

\main

 %    \begin{thm}\label{main1}
	% 	Fix an ordinal $\alpha$ and suppose that $G$ and $H$ are groups such that $G\baf_{1+\alpha} H$.  Then for any weak modulus $\Omega$, we have that $L(G) \baf_\alpha L(H)$.
	% \end{thm}
	
	Note that if $\alpha$ is infinite, then $1+\alpha = \alpha$, whence we get the symmetric implication $G\baf_{\alpha}H$ implies $L(G)\baf_\alpha L(H)$ for infinite ordinals $\alpha$. Theorem \ref{main1} follows from the following more general statement:
	
	\begin{lem}\label{generallemma}
	Working in the context of Lemma \ref{mainlemma1} but assuming that $(G,\vec g)\baf_{1+\alpha}(H,\vec h)$, we have that
		 $(L(G),\vec y) \bafO{\Omega}_\alpha (L(H),\vec z)$ for any weak modulus $\Omega$.
	\end{lem}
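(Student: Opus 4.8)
The plan is a transfinite induction on $\alpha$ that directly mirrors the recursive clause defining the continuous back-and-forth pseudodistances $r_\alpha$. For the base case $\alpha=0$ I would show $r_0^{L(G),L(H)}(\vec y,\vec z)=0$, i.e.\ that every basic $\Omega$-formula takes the same value at $\vec y$ and at $\vec z$. Since $(G,\vec g)\baf_1(H,\vec h)$ gives in particular $(G,\vec g)\equiv_0(H,\vec h)$, and since the only predicate symbol of a tracial von Neumann algebra (as a metric structure) is the $\|\cdot\|_2$-metric, an atomic formula evaluated at $\vec y$ has the shape $\|p(\vec y)\|_2=\sqrt{\tr(q(\vec y))}$ for $*$-polynomials $p,q$ with $q=p^{*}p$; Lemma \ref{mainlemma1} then gives $\tr(q(\vec y))=\tr(q(\vec z))$, so atomic — and hence all basic — formulas agree at $\vec y$ and $\vec z$, and $r_0=0$. (Lemma \ref{mainlemma3} with $\baf_1$ also yields $\|y_\ell\|=\|z_\ell\|$, so $\vec y$ and $\vec z$ lie in corresponding operator-norm sorts, as the many-sorted game requires.)

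For the inductive step, take $\alpha>0$ and assume the lemma for every $\beta<\alpha$. I must show that for each $\beta<\alpha$ we have $\sup_{\vec w\in L(G)}\inf_{\vec v\in L(H)}r_\beta^{L(G),L(H)}(\vec y\,\vec w,\vec z\,\vec v)=0$ together with the mirror-image statement; by symmetry of $\baf$ it suffices to treat the first. Fix $\beta<\alpha$, a tuple $\vec w=(w_1,\dots,w_k)\in L(G)^k$ with $w_j$ in the ball of radius $R_j$, and $\varepsilon>0$. By the Kaplansky Density Theorem I replace $\vec w$ by a tuple $\vec w'\in\bb C[G]^k$ with $\|w_j'\|\le R_j$ and $\|w_j-w_j'\|_2$ as small as I wish. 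Choosing a tuple $\vec g^{*}$ from $G$ which begins with $\vec g$ and lists all group elements occurring in $\vec w'$, the concatenation $\vec y\,\vec w'$ is exactly a tuple of the form appearing in Lemma \ref{mainlemma1}, built from $\vec g^{*}$. Because $\beta<\alpha$ forces $1+\beta<1+\alpha$, the hypothesis $(G,\vec g)\baf_{1+\alpha}(H,\vec h)$ supplies a tuple $\vec h^{*}$ from $H$ beginning with $\vec h$ such that $(G,\vec g^{*})\baf_{1+\beta}(H,\vec h^{*})$. Replacing each $u_{g^{*}_i}$ by $u_{h^{*}_i}$ in the expressions for the $w_j'$ produces $\vec v'\in\bb C[H]^k$ with $\|v_j\|=\|w_j'\|\le R_j$ (Lemma \ref{mainlemma3}), so that $\vec y\,\vec w'$ and $\vec z\,\vec v'$ are corresponding tuples for $\vec g^{*},\vec h^{*}$ in the sense of Lemma \ref{mainlemma1}. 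The induction hypothesis at $\beta$ now gives $(L(G),\vec y\,\vec w')\bafO{\Omega}_\beta(L(H),\vec z\,\vec v')$, i.e.\ $r_\beta^{L(G),L(H)}(\vec y\,\vec w',\vec z\,\vec v')=0$. Finally, part (2) of the Fact on $r_\alpha$ — that $r_\beta^{L(G),L(H)}$ is uniformly continuous obeying the truncation $\Omega\!\res_{m+k}$ on each side — bounds
\[ r_\beta^{L(G),L(H)}(\vec y\,\vec w,\vec z\,\vec v')\le\Omega\!\res_{m+k}\!\bigl(0,\dots,0,\|w_1-w_1'\|_2,\dots,\|w_k-w_k'\|_2\bigr), \]
which is $<\varepsilon$ once the approximation is good enough, since $\Omega\!\res_{m+k}$ is subadditive, continuous in each argument, and vanishes at zero. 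As this bound is uniform in $\vec w$, one gets $\sup_{\vec w}\inf_{\vec v}r_\beta=0$; the mirror-image statement is identical, using the other clause in the definition of $\baf_{1+\alpha}$. Taking $\vec g=\vec h=\varnothing$ then recovers Theorem \ref{main1}.

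The step I expect to be the main obstacle is the reduction inside the inductive step from an arbitrary tuple $\vec w$ of the von Neumann algebra $L(G)$ to a tuple in the dense $*$-subalgebra $\bb C[G]$: this has to be done while simultaneously keeping the operator-norm bounds intact (so the many-sorted bookkeeping survives — handled by Kaplansky density together with Lemma \ref{mainlemma3}) and controlling the resulting perturbation of $r_\beta$ through the weak modulus $\Omega$. Everything else is routine transfinite recursion, the only genuine subtlety being the ordinal identity $1+\beta<1+\alpha$ for $\beta<\alpha$, which is precisely what makes the ``$1+\alpha$'' of the hypothesis line up with the ``$\alpha$'' of the conclusion.
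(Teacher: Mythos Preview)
Your proposal is correct and follows essentially the same route as the paper's own proof: transfinite induction on $\alpha$, with the base case handled by Lemma~\ref{mainlemma1} (plus the sort check via the operator-norm equality), and the successor/limit step handled by Kaplansky density to pass to $\bb C[G]$, the group back-and-forth hypothesis to transport the supports, the induction hypothesis at level $\beta$, and the uniform continuity of $r_\beta$ with respect to $\Omega\!\res_{m+k}$ to absorb the approximation error. The only cosmetic differences are that the paper invokes Lemma~\ref{mainlemma2} rather than Lemma~\ref{mainlemma3} for the sort bookkeeping and writes the extended tuple as $\vec g\,\vec g'$ rather than a single $\vec g^{*}$; your treatment of the operator-norm bounds via Kaplansky is in fact slightly more explicit than the paper's.
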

	\begin{proof}
		First, note that by Lemma \ref{mainlemma2}, we have $\| y_\ell \| = \|z_\ell\|$ and so $y_\ell$ and $z_\ell$ always come from the same sort.
		
		Fix a weak modulus $\Omega$; in the rest of the proof, we suppress mention of $\Omega$ in all notation in connection with the back-and-forth relations.  
  
  We prove the lemma by induction on $\alpha$. The base case is $\alpha = 0$, so that $(G,\vec g) \baf_{1} (H,\vec h)$. We must verify that $(L(G),\vec y) \baf_0 (L(H),\vec z)$, that is, that $r_0^{L(G),L(H)}(\vec y,\vec z) = 0$. This fact follows immediately from Lemma 
		\ref{mainlemma1}.
		
		Now suppose that we know the lemma holds for all $\beta < \alpha$;  we shall prove it for $\alpha$. Fix $\epsilon > 0$; we will show that $r_\alpha^{L(G),L(H)}(\vec y,\vec z) < \epsilon$. By symmetry, it suffices to show that
		\[ \sup_{\beta < \alpha} \sup_{\vec c \in L(G)} \inf_{\vec d \in L(H)} r_\beta^{L(G),L(H)}(\vec y \vec c,\vec z \vec d) < \epsilon.\] Towards that end, it suffices to show that, given $\beta < \alpha$ and $\vec u \in L(G)$, there is $\vec v \in L(H)$ such that $r_\beta^{L(G),L(H)}(\vec y \vec u,\vec z \vec v) < \epsilon/2$. Choose $\delta > 0$ such that \[\Omega \res_{\ell(\vec y) + \ell(\vec u)}( \underbrace{0,\ldots,0}_{\ell(\vec y) times}, \underbrace{\delta,\ldots,\delta}_{\ell(\vec u) times}) < \epsilon/2.\] Fix a contraction $\vec u' \in \mathbb{C}[G]$ such that $\|\vec u - \vec u'\|_2 < \delta$.  For each $\ell=1,\ldots,m$, write $u_\ell'=\sum_{s=1}^{q_\ell}c_{s,\ell} u_{g'_{j(s,\ell)}}$ with $\vec g'$ being a tuple of elements of $G$ containing the support of all of the $u_\ell'$. Then, since $(G,\vec g) \baf_{1+\alpha} (H, \vec h)$, there is $\vec h' \in H$ such that $(G,\vec g \vec g') \baf_{1+\beta} (H, \vec h \vec h')$. Define $\vec v'$ from $\vec h'$ in the same way that $\vec u'$ was defined from $\vec g'$, that is, for each $\ell$, $v_\ell'=\sum_{s=1}^{q_\ell}c_{s,\ell} u_{h'_{j(s,\ell)}}$. (Again, by Lemma \ref{mainlemma2}, each $\vec v_l'$ belongs to the right sort.)
		
		By the induction hypothesis, we have that
		\[ r_\beta^{L(G),L(H)}(\vec y \vec u',\vec z \vec v') = 0.\]
		Since $\|u_\ell - u_\ell'\|_2 < \delta$ for each $\ell$ and $r_\beta^{L(G),L(H)}$ respects the modulus $\Omega \res_{\ell(\vec y) + \ell(\vec x)}$ on each side, by the choice of $\delta$, we have
		\[ |r_\beta^{L(G),L(H)}(\vec y \vec u,\vec z \vec v') - r_\beta^{L(G),L(H)}(\vec y \vec u',\vec z \vec v')| < \epsilon/2.\]
		Thus we conclude that $r_\beta^{L(G),L(H)}(\vec y \vec u,\vec z \vec v') < \epsilon/2$, as desired. This concludes the proof.
	\end{proof}

        In particular, we get as a corollary the first theorem of the introduction.
 
	\begin{cor}\label{maincor}
		Suppose that $G$ and $H$ are groups such that $G \baf_{\omega} H$. Then $L(G) \equiv L(H)$.
	\end{cor}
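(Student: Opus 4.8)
The plan is to derive Corollary~\ref{maincor} directly from Lemma~\ref{generallemma} (equivalently, from the restated Theorem~\ref{main1}) together with Corollary~\ref{cor:cts-equiv}. Taking $\alpha = \omega$ in Lemma~\ref{generallemma} with empty tuples $\vec g = \vec h = \varnothing$ (so that $\vec y$ and $\vec z$ are also empty), the hypothesis becomes $G \baf_{1+\omega} H$, which is the same as $G \baf_\omega H$ since $1 + \omega = \omega$; the conclusion is then that $L(G) \bafO{\Omega}_\omega L(H)$ for \emph{any} weak modulus $\Omega$.

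First I would invoke the previous sentence to obtain $L(G) \bafO{\Omega}_\omega L(H)$ in the special case $\Omega = \Omega_L$, the $1$-Lipschitz weak modulus (or, just as well, $\Omega = \Omega_U$). Then I would apply Corollary~\ref{cor:cts-equiv}, which says precisely that $M \bafO{\Omega}_\omega N$ for $\Omega$ the $1$-Lipschitz or universal weak modulus implies $M \equiv N$. Applying this with $M = L(G)$ and $N = L(H)$ yields $L(G) \equiv L(H)$, as desired.

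There is essentially no obstacle here: the corollary is a formal consequence of two results already established, and the only small points to check are the ordinal arithmetic $1 + \omega = \omega$ (so that the $\baf_\omega$ hypothesis matches the $\baf_{1+\alpha}$ hypothesis of the lemma with $\alpha = \omega$) and the fact that one is free to choose whichever weak modulus is convenient, since Lemma~\ref{generallemma} holds for all of them and Corollary~\ref{cor:cts-equiv} needs only one suitable choice. Thus the proof is a two-line deduction, and if anything the ``hard part'' was already done in establishing Lemma~\ref{generallemma}.

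\begin{proof}
Since $1 + \omega = \omega$, the hypothesis $G \baf_\omega H$ is the statement $(G,\varnothing) \baf_{1+\omega} (H,\varnothing)$. Applying Lemma~\ref{generallemma} with $\alpha = \omega$, empty tuples, and $\Omega = \Omega_L$ the $1$-Lipschitz weak modulus, we conclude that $L(G) \bafO{\Omega_L}_\omega L(H)$. By Corollary~\ref{cor:cts-equiv}, this implies $L(G) \equiv L(H)$.
\end{proof}
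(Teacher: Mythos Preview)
Your proof is correct and follows essentially the same route as the paper's own proof: apply Theorem~\ref{main1} (equivalently, Lemma~\ref{generallemma} with empty tuples) at $\alpha=\omega$ with $\Omega=\Omega_L$ or $\Omega_U$, then invoke Corollary~\ref{cor:cts-equiv}. The only cosmetic difference is that you spell out the ordinal arithmetic $1+\omega=\omega$ explicitly, which the paper leaves implicit.
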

	\begin{proof}
		This is just Theorem \ref{main1} for $\Omega = \Omega_L$ or $\Omega = \Omega_U$, which says that $L(G) \bafO{\Omega} L(H)$, combined with Corollary \ref{cor:cts-equiv}, which then implies that $L(G) \equiv L(H)$. 
	\end{proof}

		Combining Corollary \ref{maincor} with all of the examples of nonisomorphic groups that are back-and-forth equivalent gives a plethora of examples of elementarily equivalent group von Neumann algebras $L(G_1)\equiv L(G_2)$ with $G_1\not\cong G_2$.  In particular, we have uncountably many pairs $(K,L)$ of countable fields of characteristic $0$ for which $K\not\cong L$ but $L(SL_n(K))\equiv L(SL_n(L))$ for all $n$.  Since $K$ and $L$ are infinitely generated fields, $SL_n(K)$ and $SL_n(L)$ do not have property (T), but are inductive limits of countable chains of property (T) subfactors.  Thus, even though these groups do not fall under the domain of Connes' rigidity conjecture, it still seems reasonable to conjecture that if $L(SL_n(K))\cong L(SL_n(L))$, then $SL_n(K)\cong SL_n(L)$, and thus $K\cong L$, which we know is not the case.  Consequently, it seems that these pairs of group von Neumann algebras $L(SL_n(K))$ and $L(SL_n(L))$ are elementarily equivalent but not isomorphic.

  In general, we are unable to show that the group von Neumann algebras appearing in Theorem \ref{main1} above are nonisomorphic.  It is known that the relation $\sim_{vN}$ on countable groups given by $G\sim_{vN}H$ if and only if $L(G)\cong L(H)$ is complete analytic while the equivalence relations given by $G\sim_{vN,\alpha}H$ if and only if $L(G)\baf_\alpha L(H)$ $(\alpha<\omega_1$) are Borel.  (The former statement follows from \cite{sasyktornquist} while the latter follows from \cite{metricscott} and the fact that the functor $G\mapsto L(G)$ is Borel.)  It follows that, for any countable ordinal $\alpha$, there are uncountably many pairs of groups $(G,H)$ such that $L(G)\baf_\alpha L(H)$ (and thus $L(G)\equiv L(H)$ if $\alpha\geq \omega$ and $\Omega$ is chosen appropriately) but $L(G)\not\cong L(H)$.  However, we cannot conclude that these algebras arise from pairs satisfying $G\baf_\alpha H$.  In fact, it is a priori possible that there is some $\alpha<\omega_1$ such that whenever $G\baf_\alpha H$, then $L(G)\cong L(H)$.

In the final section, we will extend our results to certain crossed product algebras and point out that, using some results of Popa, we can achieve that these algebras, which will be $\baf_\omega$-equivalent, are actually nonisomorphic.
 	
	The techniques in the proof of Theorem \ref{main1}, combined with Lemma \ref{mainlemma3}, can be used to establish a similar result for reduced group \cstar-algebras:
	
	\begin{thm}\label{main2}
		For any ordinal $\alpha$, if $G$ and $H$ are groups such that $G\baf_{1+\alpha} H$, then for any weak modulus $\Omega$, we have that $C^*_r(G) \bafO{\Omega}_\alpha C^*_r(H)$.
	\end{thm}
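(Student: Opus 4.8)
The plan is to run the proof of Theorem~\ref{main1} essentially verbatim, with the operator norm playing the role of $\|\cdot\|_2$, the density of $\bb C[G]$ in $C^*_r(G)$ (which is the \emph{definition} of $C^*_r(G)$) playing the role of the Kaplansky density argument, and Lemma~\ref{mainlemma3} playing the role of Lemma~\ref{mainlemma1} and Lemma~\ref{mainlemma2} together. Concretely, I would first isolate the following analogue of Lemma~\ref{generallemma}: working in the context of Lemma~\ref{mainlemma1} but assuming $(G,\vec g)\baf_{1+\alpha}(H,\vec h)$, one has $(C^*_r(G),\vec y)\bafO{\Omega}_\alpha (C^*_r(H),\vec z)$ for every weak modulus $\Omega$; the theorem is then the special case $\vec g=\vec h=\varnothing$. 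Fix $\Omega$ and suppress it from the notation. As a preliminary remark, Lemma~\ref{mainlemma3} gives $\|y_\ell\|=\|z_\ell\|$ for each $\ell$, so $y_\ell$ and $z_\ell$ always lie in the same sort, and the same will hold for the auxiliary elements constructed below.

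The argument proceeds by induction on $\alpha$. For the base case $\alpha=0$ the hypothesis is $(G,\vec g)\baf_{1+0}(H,\vec h)=(G,\vec g)\baf_1(H,\vec h)$, and what must be shown is $r_0^{C^*_r(G),C^*_r(H)}(\vec y,\vec z)=0$, i.e.\ $\phi^{C^*_r(G)}(\vec y)=\phi^{C^*_r(H)}(\vec z)$ for every basic $\Omega$-formula $\phi$. Since every basic formula is a continuous function of atomic formulas, and an atomic formula in the metric language of a $\mathrm{C}^*$-algebra is a distance $\|\tau_1(\vec x)-\tau_2(\vec x)\|$ between terms — equivalently the operator norm $\|p(\vec x)\|$ of a $*$-polynomial $p$ — this reduces immediately to Lemma~\ref{mainlemma3}, which asserts exactly $\|p(\vec y)\|=\|p(\vec z)\|$ under the hypothesis $(G,\vec g)\baf_1(H,\vec h)$.

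For the inductive step, assume the claim for all $\beta<\alpha$ and fix $\epsilon>0$; we want $r_\alpha^{C^*_r(G),C^*_r(H)}(\vec y,\vec z)<\epsilon$. By symmetry it suffices, given $\beta<\alpha$ and a tuple $\vec u\in C^*_r(G)$, to produce $\vec v\in C^*_r(H)$ with $r_\beta^{C^*_r(G),C^*_r(H)}(\vec y\,\vec u,\vec z\,\vec v)<\epsilon/2$. Choose $\delta>0$ so that the truncation $\Omega\res$ of arity $\ell(\vec y)+\ell(\vec u)$ evaluated at $(0,\dots,0,\delta,\dots,\delta)$ (zeros in the first $\ell(\vec y)$ slots) is $<\epsilon/2$, and use operator-norm density of $\bb C[G]$ in $C^*_r(G)$ to pick a contraction $\vec u'\in\bb C[G]$ with $\|\vec u-\vec u'\|<\delta$. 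Write the coordinates of $\vec u'$ in terms of the canonical unitaries $u_{g'_j}$ for a tuple $\vec g'$ of group elements containing all their supports. Since $1+\beta<1+\alpha$ and $(G,\vec g)\baf_{1+\alpha}(H,\vec h)$, there is $\vec h'\in H$ with $(G,\vec g\,\vec g')\baf_{1+\beta}(H,\vec h\,\vec h')$; form $\vec v'$ from $\vec h'$ exactly as $\vec u'$ was formed from $\vec g'$. By Lemma~\ref{mainlemma3}, $\vec v'$ is a contraction lying in the correct sorts, and by the induction hypothesis (applied to the tuples $(\vec g,\vec g')$, $(\vec h,\vec h')$ and the corresponding $*$-polynomial expressions) we get $r_\beta^{C^*_r(G),C^*_r(H)}(\vec y\,\vec u',\vec z\,\vec v')=0$. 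Since $r_\beta$ obeys $\Omega\res$ on each side (the general fact about $r_\alpha$ cited before Lemma~\ref{generallemma}) and $\|\vec u-\vec u'\|<\delta$, the choice of $\delta$ yields $r_\beta^{C^*_r(G),C^*_r(H)}(\vec y\,\vec u,\vec z\,\vec v')<\epsilon/2$, so $\vec v:=\vec v'$ works. This completes the induction.

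The only point demanding any care beyond transcription is the base case: one must be sure that the atomic formulas of the chosen metric language for reduced group $\mathrm{C}^*$-algebras are indeed (continuous functions of) operator norms of $*$-polynomials in the generators, so that Lemma~\ref{mainlemma3} applies without change. Once that is settled the proof is formally identical to that of Lemma~\ref{generallemma}, and I do not anticipate a genuine obstacle.
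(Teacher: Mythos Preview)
Your proposal is correct and is exactly the approach the paper indicates: the paper does not spell out a separate proof of Theorem~\ref{main2} but simply remarks that the techniques of Theorem~\ref{main1} (i.e.\ the induction in Lemma~\ref{generallemma}), combined with Lemma~\ref{mainlemma3}, go through for $C^*_r(G)$, which is precisely what you have written out in detail. The one small point you might make explicit is that approximating $\vec u$ by a \emph{contraction} $\vec u'\in\bb C[G]$ in operator norm requires a tiny rescaling argument (there is no Kaplansky density here), but this is routine and does not affect the structure of the proof.
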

	
	% \begin{cor}
		% There are countable, nonisomorphic integral domains $R_1$ and $R_2$ such that, for all $n\geq 2$, we have $L(SL_n(R_1))\equiv L(SL_n(R_2))$, $L(GL_n(R_1))\equiv L(GL_n(R_2))$, $C^*_r(SL_n(R_1))\equiv C^*_r(SL_n(R_2))$, and $C^*_r(GL_n(R_1))\equiv C^*_r(GL_n(R_2))$.
		% \end{cor}
	
	Combining Theorems \ref{main1} and \ref{main2} with Lemmas \ref{directbaf} and \ref{freebaf} above, we have:
	
	\begin{cor}\label{combining}
		Suppose that $G_1,G_2,H_1$, and $H_2$ are groups such that $G_i\baf_{1+\alpha} H_i$ for $i=1,2$.  Then we have:  
  \begin{itemize}
\item $L(G_1)\otimes L(G_2)\baf_\alpha L(H_1)\otimes L(H_2)$. 
\item $L(G_1)* L(G_2)\baf_\alpha L(H_1)*L(H_2)$. 
\item $C^*_r(G_1)\otimes_{\min}C^*_r(G_2)\baf_\alpha C^*_r(H_1)\otimes_{\min}C^*_r(H_2)$. 
\item $C^*_r(G_1)*C^*_r(G_2)\baf_\alpha C^*_r(H_1)*C^*_r(H_2)$ (reduced free product).
\end{itemize}
	\end{cor}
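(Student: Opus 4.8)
The plan is to deduce all four equivalences from the single-group preservation results (Theorems \ref{main1} and \ref{main2}) together with the classical identifications of the operator algebras of a direct product (resp.\ free product) of groups. First I would record the group-level step: since $G_i\baf_{1+\alpha}H_i$ for $i=1,2$, Lemma \ref{directbaf} gives $G_1\times G_2\baf_{1+\alpha}H_1\times H_2$ and Lemma \ref{freebaf} gives $G_1*G_2\baf_{1+\alpha}H_1*H_2$.

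Applying Theorem \ref{main1} to these two conclusions yields, for any weak modulus $\Omega$, that $L(G_1\times G_2)\bafO{\Omega}_\alpha L(H_1\times H_2)$ and $L(G_1*G_2)\bafO{\Omega}_\alpha L(H_1*H_2)$, and applying Theorem \ref{main2} yields $C^*_r(G_1\times G_2)\bafO{\Omega}_\alpha C^*_r(H_1\times H_2)$ and $C^*_r(G_1*G_2)\bafO{\Omega}_\alpha C^*_r(H_1*H_2)$. It then remains to transport these equivalences across the canonical isomorphisms $L(G_1\times G_2)\cong L(G_1)\otimes L(G_2)$ (coming from $\ell^2(G_1\times G_2)=\ell^2(G_1)\otimes\ell^2(G_2)$ and $u_{(g_1,g_2)}=u_{g_1}\otimes u_{g_2}$, under which the canonical trace becomes $\tr\otimes\tr$), $L(G_1*G_2)\cong L(G_1)*L(G_2)$ (the von Neumann algebra free product taken with respect to the canonical traces), $C^*_r(G_1\times G_2)\cong C^*_r(G_1)\otimes_{\min}C^*_r(G_2)$, and $C^*_r(G_1*G_2)\cong C^*_r(G_1)*C^*_r(G_2)$ (the reduced free product with respect to the canonical traces). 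Since an isomorphism of metric structures $M\cong N$ implies $M\bafO{\Omega}_\alpha N$ for every weak modulus $\Omega$ and every ordinal $\alpha$ --- an immediate induction on $\alpha$ whose base case is that an isomorphism preserves the value of every basic $\Omega$-formula, and whose inductive step uses the isomorphism in one direction and its inverse in the other --- each of the equivalences above carries over to the corresponding algebra in the statement, which is exactly what is claimed (understood, as in Theorems \ref{main1} and \ref{main2}, for an arbitrary weak modulus $\Omega$).

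The one point requiring more than bookkeeping is verifying that the four classical identifications really are isomorphisms of the relevant \emph{metric} structures, i.e.\ that they preserve not only the $*$-algebra structure but also the distinguished trace (hence the $\|\cdot\|_2$-metric and the decomposition into operator-norm ball sorts) in the von Neumann algebra cases, and the distinguished state together with the operator norm in the reduced \cstar-algebra cases. For the direct-product cases this is the elementary computation indicated above; for the free-product cases it is precisely the content of the standard theory of tracial (resp.\ reduced) free products of operator algebras applied to the canonical example of group operator algebras, which I would cite rather than reprove. Everything else is a direct concatenation of the cited results.
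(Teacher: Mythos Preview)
Your proposal is correct and follows exactly the paper's approach: combine Lemmas \ref{directbaf} and \ref{freebaf} at the group level with Theorems \ref{main1} and \ref{main2}, then invoke the standard identifications $L(G_1\times G_2)\cong L(G_1)\otimes L(G_2)$, $L(G_1*G_2)\cong L(G_1)*L(G_2)$, and their \cstar-analogues. The paper states this in a single sentence without spelling out the metric-structure compatibility you carefully address, so your write-up is in fact more detailed than the original.
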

	
    For example, we can find nonisomorphic countable fields $K$ and $L$ of characteristic $0$ such that $L(SL_n(K))\otimes \mathcal{R}\equiv L(SL_n(L))\otimes \mathcal{R}$, where $\mathcal{R}$ is the hyperfinite II$_1$ factor.
	
	\begin{remark}
		Corollary \ref{combining} stands in contrast with the fact that, in general, the effect of tensor product and free product on elementary equivalence of tracial von Neumann algebras is not well-understood.
	\end{remark}
	
    While finitely generated free groups are never back-and-forth equivalent, an easy back-and-forth argument shows that $\bb F_\kappa\equiv_{\infty,\omega} \bb F_\lambda$ for any infinite cardinals $\kappa$ and $\lambda$. 
    
    %Recall that even for uncountable $M$ and $N$, $M \equiv_{\infty,\omega} N$ if and only if for all $\alpha$ (including uncountable $\alpha$) $M \baf_\alpha N$ \cite{Nadel}.

    Following \cite{metricscott}, in continuous logic we can define $r^{M,N,\Omega}_\infty(\vec{a},\vec{b}) := \sup_\alpha r^{M,N,\Omega}_\alpha(\vec a,\vec b)$ and thus define $ M \equiv^\Omega_{\infty,\omega} N$ to mean that, for all $\alpha<\omega_1$, we have $M \bafO{\Omega}_\alpha N$.\footnote{We are not aware of an equivalent definition, using games of infinite length, of $r_\infty$ or $\equiv_{\infty,\omega}$ in the literature.} Consequently, Theorems \ref{main1} and \ref{main2} hold for $\equiv^\Omega_{\infty,\omega}$ as well:

    \begin{cor}
        If $G$ and $H$ are groups such that $G \equiv_{\infty,\omega} H$, then for any weak modulus $\Omega$, we have that $L(G) \equiv^\Omega_{\infty,\omega} L(H)$ and $C^*_r(G) \equiv^\Omega_{\infty,\omega} C^*_r(H)$.
    \end{cor}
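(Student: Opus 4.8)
The plan is to deduce this corollary directly from Theorem \ref{main1} and Theorem \ref{main2} together with the definition of $\equiv^\Omega_{\infty,\omega}$ recalled just above the statement. First I would observe that the hypothesis $G \equiv_{\infty,\omega} H$ means, by the discrete analogue of the remark following Definition \ref{defn:sym-bf}, that $G \baf_\alpha H$ for \emph{all} ordinals $\alpha$ (equivalently all countable ordinals, since these groups may be taken countable, though countability is not actually needed for the implication we use). In particular, for every ordinal $\alpha$ we have $G \baf_{1+\alpha} H$ as well, since $1+\alpha$ is again an ordinal.

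Next, fix an arbitrary weak modulus $\Omega$ and an arbitrary ordinal $\alpha < \omega_1$. Applying Theorem \ref{main1} with the ordinal $\alpha$ (using that $G \baf_{1+\alpha} H$, which holds by the previous paragraph), we conclude $L(G) \bafO{\Omega}_\alpha L(H)$. Since $\alpha < \omega_1$ was arbitrary, this says precisely that $L(G) \equiv^\Omega_{\infty,\omega} L(H)$, by the definition $M \equiv^\Omega_{\infty,\omega} N \iff M \bafO{\Omega}_\gamma N$ for all $\gamma < \omega_1$. The identical argument with Theorem \ref{main2} in place of Theorem \ref{main1} gives $C^*_r(G) \equiv^\Omega_{\infty,\omega} C^*_r(H)$.

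There is essentially no obstacle here; the corollary is a formal consequence once one unwinds the two definitions of $\equiv_{\infty,\omega}$ (discrete and continuous) in terms of the transfinite back-and-forth hierarchy. The only minor point worth stating explicitly in the write-up is the ordinal-arithmetic triviality that $\{1+\alpha : \alpha \text{ an ordinal}\}$ exhausts all ordinals $\geq 1$, so that quantifying the hypothesis of Theorem \ref{main1} over all $\alpha$ is the same as quantifying $G \baf_\beta H$ over all $\beta \geq 1$; and the base case $\beta = 0$ is subsumed since $G \baf_1 H$ implies $G \baf_0 H$ by monotonicity of the $\baf_\beta$ hierarchy. Thus the full strength of $G \equiv_{\infty,\omega} H$ feeds exactly into the full strength of the conclusion $L(G) \equiv^\Omega_{\infty,\omega} L(H)$, and similarly on the \cstar-side.
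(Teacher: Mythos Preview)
Your proposal is correct and follows exactly the approach the paper intends: the corollary is stated immediately after the definition of $\equiv^\Omega_{\infty,\omega}$ with the remark that ``Theorems \ref{main1} and \ref{main2} hold for $\equiv^\Omega_{\infty,\omega}$ as well,'' and no further proof is given. Your unpacking of the definitions and the observation about the $1+\alpha$ shift are precisely what is needed to make this formal.
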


    The previous corollary is only interesting for uncountable groups $G$ and $H$; for countable groups, it states the trivial fact that if $G \cong H$, then $L(G) \cong L(H)$ and $C^*_r(G) \cong C^*_r(H)$.

  However, applying the previous corollary to the case of infinitely generated free groups, we have:

 \begin{cor}\label{cor:freegp}
		For all infinite cardinals $\kappa$ and $\lambda$, we have $L(\bb F_\kappa)\equiv_{\infty,\omega} L(\bb F_\lambda)$ and $C^*_r(\bb F_\kappa)\equiv_{\infty,\omega} C^*_r(\bb F_\lambda)$.
    \end{cor}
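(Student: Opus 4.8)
The plan is to combine the transfer corollary stated immediately above (that $G\equiv_{\infty,\omega}H$ implies $L(G)\equiv^\Omega_{\infty,\omega}L(H)$ and $C^*_r(G)\equiv^\Omega_{\infty,\omega}C^*_r(H)$ for every weak modulus $\Omega$) with the purely group-theoretic fact, already flagged above, that $\bb F_\kappa\equiv_{\infty,\omega}\bb F_\lambda$ for all infinite cardinals $\kappa$ and $\lambda$. Granting the latter, Corollary \ref{cor:freegp} is immediate: apply the transfer corollary with $G=\bb F_\kappa$ and $H=\bb F_\lambda$, taking $\Omega$ to be, say, the universal modulus $\Omega_U$ (or whichever weak modulus is intended in the statement; for the $C^*_r$ case one may use $\Omega_L$). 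So the only real work is to verify $\bb F_\kappa\equiv_{\infty,\omega}\bb F_\lambda$.

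For that, I would exhibit a winning strategy for Duplicator in the infinite-length EF game between $\bb F_\kappa$ and $\bb F_\lambda$ (for the infinite game it makes no difference whether players play single elements or tuples). Fix free bases $X$ of $\bb F_\kappa$ and $Y$ of $\bb F_\lambda$. Duplicator maintains the invariant that after each finite stage there are finite sets $X_0\subseteq X$ and $Y_0\subseteq Y$ of the same cardinality, together with a bijection $\theta\colon X_0\to Y_0$, such that the tuple played so far in $\bb F_\kappa$ lies in $\langle X_0\rangle$, the tuple played in $\bb F_\lambda$ lies in $\langle Y_0\rangle$, and the two correspond under the isomorphism $\langle X_0\rangle\to\langle Y_0\rangle$ induced by $\theta$ (here one uses that $\langle X_0\rangle$ and $\langle Y_0\rangle$ are \emph{freely} generated by $X_0$ and $Y_0$, so $\theta$ genuinely induces an isomorphism). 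When Spoiler plays an element of $\bb F_\kappa$ — a word in finitely many letters of $X$ — Duplicator enlarges $X_0$ to contain those letters, enlarges $Y_0$ by the same finite number of fresh letters of $Y$ (possible since $Y$ is infinite), extends $\theta$ arbitrarily across the new letters, and responds with the $\theta$-image of the played word; the case where Spoiler plays in $\bb F_\lambda$ is symmetric. The invariant is visibly preserved, and quantifier-free formulae agree on the running tuples at each stage because the induced map on the generated subgroups is an isomorphism. Since Duplicator can keep playing forever, Duplicator wins, i.e.\ $\bb F_\kappa\equiv_{\infty,\omega}\bb F_\lambda$.

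There is essentially no obstacle: the back-and-forth argument is elementary, the only point to keep straight being that in the infinite game Duplicator need only preserve quantifier-free formulae on the running tuples rather than construct a global isomorphism — which here is automatic from freeness of the generated subgroups. It is worth noting why the conclusion is not vacuous: since $\kappa$ and $\lambda$ may be uncountable, $\equiv_{\infty,\omega}$ is strictly weaker than isomorphism in this setting (for instance, $L(\bb F_{\aleph_0})$ and $L(\bb F_{\aleph_1})$ have different density characters and so are non-isomorphic, yet the corollary yields $L(\bb F_{\aleph_0})\equiv_{\infty,\omega}L(\bb F_{\aleph_1})$), so Corollary \ref{cor:freegp} genuinely says more than the trivial case $\kappa=\lambda$.
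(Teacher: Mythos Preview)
Your proposal is correct and follows exactly the paper's approach: the paper deduces the corollary by applying the preceding transfer corollary to the fact (stated just before) that an easy back-and-forth argument gives $\bb F_\kappa\equiv_{\infty,\omega}\bb F_\lambda$ for infinite $\kappa,\lambda$. You simply spell out that easy back-and-forth argument in detail, which the paper leaves to the reader.
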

 
\section{Some open questions}

In this section, we list some open problems that arise naturally from the above considerations.  Our first question arose during our discussion after the proof of Theorem \ref{main1}:

 \begin{question}
Is there $\alpha<\omega_1$ such that, whenever $G\baf_\alpha H$, we have $L(G)\cong L(H)$?
 \end{question}

 In connection with this first question, recall that a group $G$ is \textbf{$W^*$-superrigid} if, for all groups $H$ such that $L(G)\cong L(H)$, we have $G\cong H$ (that is, $G$ is completely recoverable from $L(G)$).  If the previous question has a positive answer as witnessed by some countable ordinal $\alpha$ and $G$ is a $W^*$-superrigid group, then whenever $G\baf_\alpha H$, one has $G\cong H$, that is, $\alpha$ is an upper bound on the Scott ranks of all $W^*$-superrigid groups.  This line of reasoning might present an approach for establishing a negative solution to the previous question.
	
	% \begin{question}
	% 	Is the set of pairs $(G,H)$ of countable ICC groups for which $L(G)$ is isomorphic to $L(H)$ complete analytic, or at least non-Borel? 
	% \end{question}
	
	% A positive answer to the previous question together with Theorem \ref{main1} would yield uncountably many pairs $(G,H)$ of countable ICC groups such that $L(G)$ and $L(H)$ are elementarily equivalent but not isomorphic.
	
	Recall that the \textbf{universal group \cstar-algebra of $G$}, denoted $C^*(G)$, is the completion of $\bb C[G]$ with respect to the norm $\|\cdot\|_u$ given by $$\|x\|_u:=\sup\{\|\pi(x)\| \ : \ \pi:G\to U(H) \text{ a unitary representation}\}.\footnote{Here we are abusing notation and referring to the canonical extension of $\pi:G\to U(H)$ to a $*$-algebra homormophism $\bb C[G]\to \cal B(H)$ also by $\pi$.}$$
	
	\begin{question}
		If $G \baf_\omega H$, do we have $C^*(G)\equiv C^*(H)$?  More generally, if $\alpha$ is infinite and $G\baf_\alpha H$, do we have $C^*(G)\baf_\alpha C^*(H)$?
	\end{question}

% Given Lemma \ref{bernoullilemma}, the next lemma is quite easy to prove:\todo{Double-check this.}

% \begin{lem}
% Suppose that $G$ and $H$ are groups such that $G\equiv_\alpha H$.  Further suppose that $M$ is a tracial von Neumann algebra.  Then we have $G\acts M^{\otimes G}\equiv_\alpha H\acts M^{\otimes H}$.
% \end{lem}

% Based on Theorem \ref{generalizedmain} and the previous lemma, it becomes natural to ask:

%   \begin{question}
% Suppose that $G\acts^\sigma M\baf_{1+\alpha} H\acts^\rho N$.  Do we have $M\rtimes_\sigma G\baf_\alpha N\rtimes_\rho H$?
%  \end{question}
	
	Recall that if $G\equiv_{\infty,\omega}H$, then $G\baf_\omega H$, whence $L(G)\equiv L(H)$ by Theorem \ref{main1}.  Recall also from \cite[Section 5.3]{games} that a group $G$ is \textbf{$\omega$-existentially saturated} if $G$ realizes all existential types over finite parameter sets.  It is known that if $G$ and $H$ are two $\omega$-existentially saturated groups, then $G\equiv_{\infty,\omega}H$ (see \cite[Theorem 5.3.3]{games}).  Consequently, we have:
	
	\begin{cor}
		If $G$ and $H$ are $\omega$-existentially saturated groups, then $L(G)\equiv L(H)$.
	\end{cor}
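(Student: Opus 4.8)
The plan is to simply concatenate three implications, two of which are internal to this paper and one of which is cited. First I would recall, from \cite[Theorem 5.3.3]{games}, that any two $\omega$-existentially saturated groups $G$ and $H$ satisfy $G\equiv_{\infty,\omega}H$. This is where the real content lies: it amounts to running the infinite-length back-and-forth game between $G$ and $H$, with Duplicator's moves at each stage produced by realizing the appropriate existential types using the $\omega$-existential saturation hypothesis.

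Next I would observe that $G\equiv_{\infty,\omega}H$ implies $G\baf_\omega H$. By the remark following Definition \ref{defn:sym-bf}, $M\equiv_{\infty,\omega}N$ holds if and only if $M\baf_\alpha N$ for every ordinal $\alpha$, and in particular for $\alpha=\omega$; alternatively, one reads this directly off the recursion in Definition \ref{defn:sym-bf}, which gives $M\baf_\alpha N\Rightarrow M\baf_\beta N$ whenever $\beta<\alpha$. (In fact one does not need the full strength of $\equiv_{\infty,\omega}$ here: it suffices that Duplicator can survive $\omega$-many rounds of the tuple back-and-forth game.)

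Finally I would invoke Corollary \ref{maincor}: since $G\baf_\omega H$, we conclude $L(G)\equiv L(H)$, which is exactly the assertion. The only point requiring care is citation hygiene -- confirming that the notion of $\omega$-existential saturation appealed to in \cite[Theorem 5.3.3]{games} is the one stated here before this corollary -- but no genuine mathematical obstacle arises, since each link in the chain ($\omega$-existential saturation $\Rightarrow\ \equiv_{\infty,\omega}\ \Rightarrow\ \baf_\omega\ \Rightarrow$ elementary equivalence of the group von Neumann algebras) has already been established.
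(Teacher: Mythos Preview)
Your proposal is correct and follows exactly the argument the paper gives in the paragraph preceding the corollary: cite \cite[Theorem 5.3.3]{games} to get $G\equiv_{\infty,\omega}H$, deduce $G\baf_\omega H$, and then apply Corollary \ref{maincor} (equivalently, Theorem \ref{main1}) to conclude $L(G)\equiv L(H)$. There is nothing to add.
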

	
	An $\omega$-existentially saturated group can never be countable.  An elementary subgroup of an $\omega$-existentially saturated group is called \textbf{infinitely generic} (see \cite[subsection 5.3]{games}).  Consequently, there are countable infinitely generic groups, and in fact, any countable group embeds in a countable infinitely generic group (see \cite[Exercise 5.3(1)]{games}). The following questions becomes natural:
	
	\begin{question}
		If $G$ and $H$ are infinitely generic groups, do we have $L(G)\equiv L(H)$?  Are there nonisomorphic countable infinitely generic groups $G$ and $H$ such that $L(G)\equiv L(H)$?
	\end{question}
	
	Since infinitely generic groups are in particular existentially closed, the following more basic question also arises:
	
	\begin{question}
		Are there countable nonisomorphic existentially closed groups $G$ and $H$ such that $L(G)\equiv L(H)$?
	\end{question}
	
	Unfortunately, the techniques from this paper are not going to be of any use in regards to the previous question due to the following:
	
	\begin{prop}
		If $G$ is existentially closed and $G\baf_2 H$, then $G\equiv_{\infty,\omega}H$.  In particular, if, moreover, $G$ and $H$ are both countable, then $G\cong H$.
	\end{prop}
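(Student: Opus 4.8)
The plan is to build a nonempty family $I$ of finite partial isomorphisms from $G$ to $H$ with the back-and-forth property, as such a family witnesses $G\equiv_{\infty,\omega}H$ (and, when $G,H$ are countable, Theorem \ref{scott} then gives $G\cong H$). The family I would use is
\[ I=\{\vec a\mapsto\vec b \ :\ \vec a\in G,\ \vec b\in H \text{ of the same length, and } (G,\vec a)\baf_2 (H,\vec b)\}.\]
Since $(G,\vec a)\baf_2(H,\vec b)$ implies $(G,\vec a)\baf_0(H,\vec b)$, every member of $I$ is a partial isomorphism, and $\varnothing\mapsto\varnothing\in I$ by hypothesis. So everything reduces to showing $I$ has the back-and-forth property; equivalently, that for these particular $G$ and $H$ the relation $\baf_2$ already stabilises, i.e.\ $(G,\vec a)\baf_2(H,\vec b)\Rightarrow(G,\vec a)\baf_3(H,\vec b)$, and hence $(G,\vec a)\baf_\alpha(H,\vec b)$ for all $\alpha$. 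Throughout I will freely use that $\baf_1$ (on pointed structures) is an equivalence relation and is closed under restriction of the tuples; both are routine.

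Two lemmas power the argument. First, \emph{$H$ is also existentially closed}. By the amalgamation property for groups, whether an existential formula $\exists\vec y\,\psi(\vec b,\vec y)$ (with $\psi$ quantifier-free) is satisfiable in some group extension of $H$ depends only on the quantifier-free type of $\vec b$ in $H$: a witnessing extension restricts to one of $\langle\vec b\rangle$, and conversely any extension $\Gamma\supseteq\langle\vec b\rangle$ can be amalgamated with $H$ over $\langle\vec b\rangle$. Given $\vec b\in H$, use $G\baf_2 H$ to pick $\vec a\in G$ with $(G,\vec a)\baf_1(H,\vec b)$; then $\vec a$ and $\vec b$ have the same quantifier-free type and the same existential type. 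If $\exists\vec y\,\psi(\vec b,\vec y)$ is satisfiable in an extension of $H$, then $\exists\vec y\,\psi(\vec a,\vec y)$ is satisfiable in an extension of $G$, hence holds in $G$ ($G$ existentially closed), hence holds in $H$ (same existential type). Second, \emph{an existentially closed group $K$ is quantifier-free homogeneous via conjugation}: if $\vec u,\vec u'\in K$ have the same quantifier-free type, the coordinatewise map extends to an isomorphism $\phi\colon\langle\vec u\rangle\to\langle\vec u'\rangle$ of subgroups of $K$, and the HNN extension $K*_\phi$ contains $K$ together with a stable letter conjugating $\vec u$ to $\vec u'$; since the existence of such a conjugator is an existential statement with parameters $\vec u,\vec u'$, it already holds in $K$. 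In particular every finite partial isomorphism of $K$ extends to an (inner) automorphism, so $(K,\vec u)\baf_\alpha(K,\vec u')$ for all $\alpha$ whenever $\vec u,\vec u'$ have the same quantifier-free type.

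Now the maintenance step. Suppose $(G,\vec a)\baf_2(H,\vec b)$ and Spoiler plays $\vec c\in G$ (a play in $H$ is symmetric, using that $H$ is existentially closed). By the definition of $\baf_2$, choose $\vec d\in H$ with $(G,\vec a\vec c)\baf_1(H,\vec b\vec d)$; I must check $(G,\vec a\vec c)\baf_2(H,\vec b\vec d)$, i.e.\ that for every $\vec c'\in G$ there is $\vec d'\in H$ with $(G,\vec a\vec c\vec c')\baf_1(H,\vec b\vec d\vec d')$ (and, symmetrically, respond to plays $\vec d'\in H$ by the mirror argument inside $G$, which is where quantifier-free homogeneity of $G$ is used). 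Apply $\baf_2$ for $(G,\vec a)\baf_2(H,\vec b)$ to the longer tuple $\vec c\vec c'$ to get $\vec g_1\vec g_2\in H$ (with $|\vec g_1|=|\vec c|$, $|\vec g_2|=|\vec c'|$) such that $(G,\vec a\vec c\vec c')\baf_1(H,\vec b\vec g_1\vec g_2)$; restricting gives $(G,\vec a\vec c)\baf_1(H,\vec b\vec g_1)$. Combining with $(G,\vec a\vec c)\baf_1(H,\vec b\vec d)$ yields $(H,\vec b\vec d)\baf_1(H,\vec b\vec g_1)$, so $\vec b\vec d$ and $\vec b\vec g_1$ have the same quantifier-free type in $H$. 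By quantifier-free homogeneity of $H$ there is an automorphism $\sigma$ of $H$ with $\sigma(\vec b\vec d)=\vec b\vec g_1$; set $\vec d'=\sigma^{-1}(\vec g_2)$, so $\sigma(\vec b\vec d\vec d')=\vec b\vec g_1\vec g_2$ and hence $(H,\vec b\vec d\vec d')\baf_1(H,\vec b\vec g_1\vec g_2)$. Transitivity with $(G,\vec a\vec c\vec c')\baf_1(H,\vec b\vec g_1\vec g_2)$ gives $(G,\vec a\vec c\vec c')\baf_1(H,\vec b\vec d\vec d')$, as desired.

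The hard part is precisely what the homogeneity lemma buys: the cruder invariant ``$\vec a,\vec b$ have the same quantifier-free type'' cannot be maintained, since the quantifier-free type of $\vec a\vec c$ over $\vec a$ need not be finitely generated and so need not be realized over $\vec b$ in $H$ even with $H$ existentially closed — and indeed there are $2^{\aleph_0}$ pairwise non-isomorphic countable existentially closed groups, so ``both existentially closed'' alone is hopeless. The hypothesis $G\baf_2 H$ transfers whole ``$\baf_1$-situations'', not merely realized quantifier-free types, which is exactly what lets the HNN-homogeneity line up the two sides; and $G$ existentially closed is used both to force $H$ existentially closed and to supply homogeneity on the $G$-side. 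Once $G\equiv_{\infty,\omega}H$ is proved, the countable case follows at once from Theorem \ref{scott}.
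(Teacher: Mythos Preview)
Your argument is correct, and the first step---showing that $H$ is existentially closed by pulling a tuple $\vec b\in H$ back to $\vec a\in G$ via $\baf_2$ and comparing existential types---is essentially identical to the paper's.  After that the two proofs diverge.  The paper simply quotes Macintyre's theorem \cite[Theorem 1(a)]{macintyre}: two existentially closed groups are $\equiv_{\infty,\omega}$-equivalent iff they have the same two-generated subgroups, and then reads off ``same two-generated subgroups'' from $G\baf_2 H$ in one line.  You instead unfold what is behind Macintyre's result---quantifier-free homogeneity of existentially closed groups via the HNN trick---and use it to build a back-and-forth system directly, showing that the relation $(G,\vec a)\baf_2(H,\vec b)$ is self-improving to $\baf_3$ and hence to $\baf_\alpha$ for all $\alpha$.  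Your route is longer but fully self-contained (no black-box appeal to \cite{macintyre}), and it yields the slightly sharper pointwise statement that $\baf_2$ already coincides with $\baf_\alpha$ on tuples; the paper's route is quicker but imports the nontrivial content of Macintyre's classification.
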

	
	\begin{proof}
		Suppose that $G$ is existentially closed and $G\baf_2 H$.  We first show that $H$ is also existentially closed.  To see this, fix a tuple $\vec b$ from $H$.  Since $G\baf_2 H$, there is a finite tuple $\vec a$ from $G$ such that $(G,\vec a)\baf_1 (H,\vec b)$.  In particular, $\vec a$ and $\vec b$ have the same existential types.  Since $G$ is existentially closed, the existential type of $\vec a$ is maximal, whence the same is true of the existential type of $\vec b$.  Since $\vec b$ was an arbitrary tuple from $H$, we see that the existential type of any tuple of $H$ is maximal, whence $H$ is existentially closed by \cite[Exercise 4.1(1)]{games}.
		
		Since $G$ and $H$ are both existentially closed, by \cite[Theorem 1(a)]{macintyre}, in order to show that $G\equiv_{\infty,\omega}H$, it suffices to show that they have the same two-generated subgroups.  To see this, fix $\vec a=(a_1,a_2)$ from $G$; since $G\baf_2 H$, there is $\vec b=(b_1,b_2)$ from $H$ such that $(G,\vec a)\equiv_1 (H,\vec b)$.  In particular, $\vec a$ and $\vec b$ have the same quantifier-free types, whence the subgroup of $G$ generated by $a_1$ and $a_2$ is isomorphic to the subgroup of $H$ generated by $b_1$ and $b_2$.
	\end{proof}

 \section{An extension to crossed products by Bernoulli actions}\label{sec:gpaction}

In this section, we sketch an extension of our main preservation result to crossed products by Bernoulli actions.  We begin by recalling the relevant background material from von Neumann algebra theory.

Suppose that $M$ is a tracial von Neumann algebra and $G$ is a (countable) group.  Suppose further that $G\acts^\sigma M$ is an action of $G$ on $M$, by which we mean that $\sigma$ is a group homomorphism from $G$ to the group of all trace preserving automorphisms of $M$.  Analogous to the case that $M=\bb C$, one introduces the set $M[G]$ to be the set of finitely supported formal sums $\sum_{g\in G}b_gu_g$ with each $b_g\in M$.  One uses the action $\sigma$ to make $M[G]$ into a $*$-algebra, the key difference being that multiplication and the involution are now ``twisted'' in the sense that $(b_1u_g)(b_2u_h)=b_1\sigma_g(b_2)u_{gh}$ and $(bu_g)^*=\sigma_{g^{-1}}(b^*)u_{g^{-1}}$.  (These definitions are inspired by the desire to have $\sigma_g(b)=u_gbu_g^*$ in $M[G]$.)  
 
 In order to obtain a tracial von Neumann algebra from this action, one first needs to represent $M[G]$ concretely on an appropriate Hilbert space.  To accomplish this, one first notes that any trace-preserving automorphism of $M$ extends uniquely to a unitary operator on $L^2(M)$.  One then obtains a $*$-homormorphism from $M[G]\to \cal B(L^2(M)\otimes \ell^2(G))$ by defining $(bu_g)(\xi\otimes \delta_h):=(b\sigma_g(\xi))\otimes\delta_{gh}$.  (Here, we abuse notation and let $\sigma_g$ denote its extension to a unitary operator on $L^2(M)$.)  One then defines the \textbf{crossed product algebra} $M\rtimes_\sigma G$ to be the SOT-closure of $M[G]$ in $\cal B(L^2(M)\otimes \ell^2(G))$.  $M\rtimes_\sigma G$ is a tracial von Neumann algebra when equipped with the trace $\tau(x)=\langle u_e,xu_e\rangle$; when $x=\sum_{g\in G}b_gu_g\in M[G]$, we have $\tau(x)=\tau(b_e)$.  
 
 Note that $M\rtimes_\sigma G$ naturally contains both $M$ and $L(G)$ as tracial von Neumann subalgebras.  Note also that when $M=\bb C$ (whence $G$ acts trivially) this construction is simply the construction of $L(G)$.  Another case of interest is the case that $M=L^\infty(X,\mu)$ for a probability space $(X,\mu)$ and the action $G\acts^\sigma M$ is that induced from a probability measure preserving (pmp) action of $G$ on $(X,\mu)$; in this case, the crossed product algebra $L^\infty(X,\mu)\rtimes_\sigma G$ is known as the \textbf{group measure space} construction and is denoted $(X,\mu)\rtimes_\sigma G$.

% We view an action $G\acts^\sigma M$ as a two-sorted structure with sorts for $G$ and for $M$.  We equip the first sort with the language for (discrete) groups and the second sort for the language for tracial von Neumann algebras as above.  Moreover, there is a function symbol from $G\times M$ to $M$ for the action.  (The above description is a slight abuse of terminology as the usual language for $M$ is itself many sorted with infinitely many sorts, as described above, whence there need to be many function symbols for the action as well.)

 %The following is the analog of Lemma \ref{mainlemma1} in our current context.  The proof follows the same general pattern but is slightly more cumbersome to write down given the twistedness of the multiplication and involution.  We leave the proof to the reader.
 
 We now restrict attention to a particular class of actions.  We first remind the reader about tensor products of tracial von Neumann algebras.  Given two tracial von Neumann algebras $M$ and $N$, we can first form their algebraic tensor product $M\odot N$, that is, their tensor product when viewed merely as complex vector spaces.  $M\odot N$ has a natural $*$-algebra structure and a trace given on elementary tensors by setting $\tau(x\otimes y):=\tau_M(x)\tau_M(y)$.  One then obtains the tracial von Neumann algebra $M\otimes N$ by taking the SOT-closure of $M\odot N$ in the GNS representation.  (Equivalently, one can view $M$ and $N$ as concretely represented on $L^2(M)$ and $L^2(N)$ respectively, whence $M\odot N$ is naturally a $*$-subalgebra of $\cal B(L^2(M)\otimes L^2(N))$.  $M\otimes N$ is then the SOT-closure of $M\odot N$ inside of $\cal B(L^2(M)\otimes L^2(N))$.)

 The tensor product of two tracial von Neumann algebras extends naturally to a tensor product operation on any finite number of tracial von Neumann algebras.  To take an infinite tensor product, say of the tracial von Neumann algebras $(M_i)_{i\in \bb N}$, one first considers the chain of tracial von Neumann algebras
 $$M_1\subseteq M_1\otimes M_2\subseteq M_1\otimes M_2\otimes M_3\subseteq \cdots$$ and then defines the infinite tensor product $\bigotimes_{i\in \bb N}M_i$ to be the inductive limit of this chain in the category of tracial von Neumann algebras.  (More explicitly, the set-theoretic union of the chain carries a natural trace given from the coherent sequence of traces on the constituents of the chain; one takes the GNS construction associated to this trace and then takes the SOT-closure of the set-theoretic union in the associated GNS representation.)  If $I$ is an arbitrary countable index set, one defines $\bigotimes_{i\in I}M_i$ by enumerating $I$ of order type $\omega$ in an arbitrary way.  If $F\subseteq M_i$ is finite, then we view the finite tensor product $\bigotimes_{i\in F}M_i$ as a von Neumann subalgebra of $\bigotimes_{i\in I}M_i$ in the obvious way.  If each $M_i=M$ for some fixed tracial von Neumann algebra $M$, then we write $M^{\otimes I}$ for the associated tensor power of $M$.

 Now given a group $G$ and a tracial von Neumann algebra $M$, the \textbf{Bernoulli action} of $G$ on $M^{\otimes G}$ is given by setting $(\sigma_g(x))_h:=x_{gh}$ for the elementary tensor $x=\bigotimes_{g\in F}x_g\in M^{\otimes F}\subseteq M^{\otimes G}$.  The terminology is motivated by the fact that when $G$ acts on the product probability space $(X,\mu)^G$ by permutation of coordinates (called the Bernoulli action of $G$ on $(X,\mu)^G$), then this induces the Bernoulli action of $G$ on $L^\infty(X,\mu)^{\otimes G}$.  In the remainder of this paper, when a group $G$ acts on a tensor power $M^{\otimes G}$, it is always assumed to do so via the Bernoulli action.

 In what follows, if $G$ is a group, $M$ is a tracial von Neumann algebra, $\vec g=(g_1,\ldots,g_n)$ is a tuple from $G$, and $F\subseteq G$ denotes the set of coordinates of $\vec g$, we let $M^{\odot \vec g}:=\bigodot_{h\in F}M_h$ denote the corresponding algebraic tensor product, an SOT dense $*$-subalgebra of $M^{\otimes F}$.  (The subscripts are just for indices; recall that each $M_{h}$ is just a copy of $M$.)

 The proof of the following lemmas are straightforward (following Lemmas \ref{mainlemma1}, \ref{mainlemma2}, and \ref{mainlemma3}) and are left to the reader.

\begin{lem}\label{mainlemma1analog}
		Suppose that $\vec g$ and $\vec h$ are $n$-tuples from the groups $G$ and $H$ such that $(G,\vec g) \equiv_0 (H,\vec h)$. Fix also a tracial von Neumann algebra $M$ and for each $l=1,\ldots,m$, fix a sequence $b_{1,l},\ldots,b_{p_l,l}$ from $M$.  For each $l=1,\ldots,m$ and $s=1,\ldots,p(l)$, fix $i(s,l)\in \{1,\ldots,n\}$.  Let $\vec y = (y_1,\ldots,y_m)$ and $\vec z = (z_1,\ldots,z_m)$ be the tuples from $M[G]$ and $M[H]$ defined by
		\[ y_\ell = \sum_{s=1}^{p_\ell} b_{s,\ell} u_{g_{i(s,\ell)}}\]
		and
		\[ z_\ell = \sum_{s=1}^{p_\ell} b_{s,\ell} u_{h_{i(s,\ell)}}.\]
		Then for any $*$-polynomial $p(x_1,\ldots,x_m)$, we have $\tr p(\vec y) = \tr p(\vec z)$.
\end{lem}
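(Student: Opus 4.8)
The plan is to reduce \textbf{Lemma \ref{mainlemma1analog}} to \textbf{Lemma \ref{mainlemma1}} by observing that the statement as worded involves no action $\sigma$: the coefficients $b_{s,\ell}$ live in a fixed tracial von Neumann algebra $M$, but $\vec y$ and $\vec z$ are elements of the (untwisted) group algebras $M[G]$ and $M[H]$, where multiplication of the $u_g$'s obeys $u_gu_h=u_{gh}$ and the coefficients from $M$ simply commute past these group elements as scalars do in the $M=\bb C$ case. In other words, one treats $M[G]$ exactly like $\bb C[G]$ but with the ground field $\bb C$ replaced by the noncommutative algebra $M$; the entire combinatorial content is identical.

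First I would reduce, as in the proof of Lemma \ref{mainlemma1}, to the case where $p$ is a $*$-monomial, writing $p(x_1,\ldots,x_m)=a\prod_{j=1}^r (x_{\ell_j})^{k_j}$ using Notation \ref{abusive}. Expanding $p(\vec y)$ via the binomial theorem gives a finite sum of terms, each of which is a product of coefficients $b_{s,\ell}^{\epsilon}$ from $M$ interleaved with canonical unitaries $u_{g_{i}}^{\epsilon}$. The crucial point is that in $M[G]$ these unitaries multiply \emph{freely of the $M$-coefficients}: a term collapses to $(\text{product of the }b\text{'s in }M)\cdot u_{w(\vec g)}$, where $w(\vec g)=\prod_{j,k} g_{i(s_{j,k},\ell_j)}^{\epsilon_j}$ is the corresponding word in the group elements. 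Applying the trace on $M[G]$, namely $\tr(\sum_g c_g u_g)=\tr_M(c_e)$, the only terms that contribute to $\tr p(\vec y)$ are exactly those for which the group word $w(\vec g)$ equals the identity $e$, and each such term contributes $\tr_M$ of a specific product of the $b$'s (and their adjoints) determined purely by the monomial data $a,(b_{s,\ell}),(i(s,\ell)),(k_j)$.

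Next I would make the analogous expansion and trace computation for $p(\vec z)$, with every $g_{i}$ replaced by the corresponding $h_{i}$. Because the coefficient data $(a,(b_{s,\ell}),(i(s,\ell)),(k_j))$ is identical on both sides, the scalar (i.e.\ $M$-valued) contribution of each surviving term is literally the same element of $M$; what can differ between the two sides is only \emph{which} terms survive, that is, for which index choices the group word evaluates to the identity. Here I invoke the hypothesis $(G,\vec g)\equiv_0(H,\vec h)$: since $\vec g$ and $\vec h$ have the same quantifier-free type, the word $w(\vec g)=\prod_{j,k} g_{i(s_{j,k},\ell_j)}^{\epsilon_j}$ equals the identity in $G$ if and only if the corresponding word $w(\vec h)=\prod_{j,k} h_{i(s_{j,k},\ell_j)}^{\epsilon_j}$ equals the identity in $H$. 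Hence exactly the same collection of terms survives on both sides, and they contribute the same $M$-elements under $\tr_M$, yielding $\tr p(\vec y)=\tr p(\vec z)$.

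I do not expect any serious obstacle here, which is consistent with the excerpt's statement that the proof is ``straightforward'' and left to the reader; the whole argument is Lemma \ref{mainlemma1} verbatim with $\bb C$ replaced by $M$. The only point requiring a word of care is that $M$ is noncommutative, so when collecting coefficients one must preserve the left-to-right order in which the $b$'s appear in each monomial term rather than multiplying them in an arbitrary order; but since the order of factors is dictated identically by the monomial $p$ on both the $G$-side and the $H$-side, this ordering issue affects $p(\vec y)$ and $p(\vec z)$ in exactly the same way and so does not disturb the equality. This is the sole difference from the $M=\bb C$ case, and it is purely bookkeeping.
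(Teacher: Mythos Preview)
Your argument hinges on the claim that ``the coefficients from $M$ simply commute past these group elements.'' But the only $M[G]$ the paper has defined is the \emph{twisted} group algebra, with $(b_1u_g)(b_2u_h)=b_1\sigma_g(b_2)u_{gh}$ and $(bu_g)^*=\sigma_{g^{-1}}(b^*)u_{g^{-1}}$; collecting a monomial term therefore yields an $M$-coefficient of the shape $b_{s_1}\,\sigma_{w_1}(b_{s_2})\,\sigma_{w_2}(b_{s_3})\cdots$, with each $w_j$ an initial subword of the group word. This is not ``literally the same element of $M$'' on the $G$- and $H$-sides unless the two actions are related, so your untwisted computation --- while correct for the trivial action --- does not establish the lemma in the only setting the paper uses it, namely the Bernoulli action needed for Theorem~\ref{generalizedmain}, whose proof sketch explicitly works inside $M^{\otimes G}[G]$.

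The statement is admittedly loose (no action is named), but read in context the coefficients are meant to lie in $M^{\odot \vec g}\subseteq M^{\otimes G}$ on the $G$-side and in $M^{\odot \vec h}\subseteq M^{\otimes H}$ on the $H$-side, identified via $g_i\mapsto h_i$ (well-defined because $(G,\vec g)\equiv_0(H,\vec h)$ forces the same coincidence pattern among indices). The trace of a twisted monomial is then $\tau_{M^{\otimes G}}$ of a product of Bernoulli-shifted elementary tensors; this factors over tensor coordinates into a product of traces on $M$, and which $b$-factors land in a common coordinate is governed entirely by equalities among words in $\vec g$ --- hence by the quantifier-free type. That combinatorial step, absent from your write-up, is what actually generalizes Lemma~\ref{mainlemma1}; your ``bookkeeping'' remark about noncommutativity of $M$ does not address it.
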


 \begin{lem}\label{mainlemma2analog}
         Suppose that $\vec g$ and $\vec h$ are $n$-tuples from the groups $G$ and $H$ such that $(G,\vec g) \baf_1 (H,\vec h)$. Fix also a tracial von Neumann algebra $M$ and a sequence $b_{1},\ldots,b_{p}$ from $M$.  For each $s=1,\ldots,p$, fix $i_s\in \{1,\ldots,n\}$.  Let $y$ and $z$ be the elements from $M[G]$ and $M[H]$ defined by
		\[ y = \sum_{s=1}^{p} b_{s} u_{g_{i_s}}\]
 	and
	 	\[ z = \sum_{s=1}^{p_\ell} b_{s} u_{h_{i_s}}.\]
	 	Then $\|y\|  = \|z\|$.
 \end{lem}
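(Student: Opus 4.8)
The plan is to copy the proof of Lemma \ref{mainlemma2}, letting the von Neumann algebra coefficients ride along, and to transfer the computation using the quantifier-free type of a suitably enlarged group tuple. Represent $M\rtimes_\sigma G$ on $L^2(M)\otimes\ell^2(G)$, so that $bu_g$ acts by $(bu_g)(\xi\otimes\delta_h)=(b\,\sigma_g(\xi))\otimes\delta_{gh}$. Suppose $\|y\|>r$; I would exhibit a vector witnessing $\|z\|>r$. Choose $\eta\in L^2(M)\otimes\ell^2(G)$ with $\|\eta\|\le 1$ and $\|y\eta\|>r$, and reduce, as in Lemma \ref{mainlemma2}, to the case that $\eta=\sum_{t\in\vec g^{*}}\eta_t\otimes\delta_t$ has finite support, where $\vec g^{*}$ is a finite tuple from $G$ and $\sum_t\|\eta_t\|_2^{2}\le 1$. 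Enlarging $\vec g^{*}$ further, I would also assume that each coefficient $b_s$ and each $\eta_t$ is a finite combination of elementary tensors whose supporting coordinates (in the case of the Bernoulli action, elements of $G$) all appear among the entries of $\vec g^{*}$. Then
\[ y\eta=\sum_{s,t}b_s\,\sigma_{g_{i_s}}(\eta_t)\otimes\delta_{g_{i_s}t},\]
and grouping the $\ell^2(G)$-coordinates by the value of $g_{i_s}t$ gives
\[ \|y\eta\|^{2}=\sum_{\substack{s,s',t,t'\\ g_{i_s}t=g_{i_{s'}}t'}}\big\langle b_s\,\sigma_{g_{i_s}}(\eta_t),\,b_{s'}\,\sigma_{g_{i_{s'}}}(\eta_{t'})\big\rangle_{L^2(M)}.\]

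The key point is that this value depends only on the fixed data $(b_s)$, $(\eta_t)$ together with the quantifier-free type of $\vec g\,\vec g^{*}$ in $G$, and on nothing else about $G$. Each summand is $\tau$ of a word in the $b_s$, $b_{s'}^{*}$, the $\eta$'s, and the automorphisms $\sigma_{g_{i_s}},\sigma_{g_{i_{s'}}}^{-1}$; since $\tau$ is $\sigma$-invariant one may conjugate the $\sigma$'s away exactly as in Lemma \ref{mainlemma1analog}, and for the Bernoulli action each $\sigma_g$ is simply the permutation of tensor factors induced by translation by $g$, so which coordinate each elementary tensor occupies after such a word — and hence the value of the resulting trace — is dictated purely by products of entries of $\vec g$ and $\vec g^{*}$. (When the action is trivial this step is vacuous and the displayed identity is literally that of Lemma \ref{mainlemma2} with $\mathbb{C}$ replaced by $L^2(M)$.)

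Now, since $(G,\vec g)\baf_1(H,\vec h)$, there is a finite tuple $\vec h^{*}$ from $H$ with $(G,\vec g\,\vec g^{*})\baf_0(H,\vec h\,\vec h^{*})$. I would set $\zeta=\sum_{t}\eta_t\otimes\delta_t\in L^2(M)\otimes\ell^2(H)$, reindexing the $\eta_t$ by the entries of $\vec h^{*}$ in the same pattern used for $\vec g^{*}$; then $\|\zeta\|\le 1$ and, by the previous paragraph, $\|z\zeta\|=\|y\eta\|>r$, so $\|z\|>r$. Since $G$ and $H$ play symmetric roles, $\|y\|>r\iff\|z\|>r$ for all $r\ge 0$, whence $\|y\|=\|z\|$. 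The one step needing genuine care — and the reason the statement is about Bernoulli actions — is the claim that the twisting automorphisms contribute nothing beyond the quantifier-free type of the group tuple; unwinding $\sigma_g$ as a coordinate permutation, this reduces coordinate by coordinate on the tensor factors to (the argument of) Lemma \ref{mainlemma1analog}, so no new idea is needed.
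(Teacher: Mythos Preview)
Your proof is correct and follows exactly the approach the paper indicates: the paper does not give a proof of this lemma but simply declares it ``straightforward (following Lemmas \ref{mainlemma1}, \ref{mainlemma2}, and \ref{mainlemma3}) and \dots\ left to the reader,'' and you have supplied precisely that argument---replacing the scalar coefficients $c_t\in\mathbb{C}$ in the proof of Lemma \ref{mainlemma2} by vectors $\eta_t\in L^2(M)$, expanding $\|y\eta\|^2$ as a sum of inner products indexed by coincidences $g_{i_s}t=g_{i_{s'}}t'$, and transferring via $\baf_1$. One small expository point: the remark that ``since $\tau$ is $\sigma$-invariant one may conjugate the $\sigma$'s away'' does not by itself eliminate the automorphisms (it only moves them around); the actual reason the inner products depend only on the quantifier-free type of $\vec g\,\vec g^{*}$ is the one you give immediately afterward, namely that for the Bernoulli action $\sigma_g$ is a coordinate permutation, so the trace of a product of elementary tensors factors over coordinates and the pattern of coincidences is a quantifier-free fact about $\vec g\,\vec g^{*}$.
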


\begin{lem}\label{mainlemma3analog}
  Working in the context of Lemma \ref{mainlemma1analog} but assuming that $(G,\vec g)\baf_1 (H,\vec h)$, we have $\|p(\vec y)\| = \| p(\vec z)\|$ for any $*$-polynomial
		% Let $(G,\vec g) \equiv_1 (H,\vec h)$. Let $\vec y = (y_1,\ldots,y_m)$ and $\vec z = (z_1,\ldots,z_m)$ be defined by
		% \[ y_\ell = \sum_{s=1}^{p_\ell} b_{s,\ell} u_{g_{i(s,\ell)}}\in\bb C[G]\]
		% and
		% \[ z_\ell = \sum_{s=1}^{p_\ell} b_{s,\ell} u_{h_{i(s,\ell)}}\in\bb C[G].\]
  $p(x_1,\ldots,x_m)$.
	\end{lem}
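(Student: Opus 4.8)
The plan is to run the proofs of Lemmas \ref{mainlemma1analog} and \ref{mainlemma2analog} in parallel, exactly as Lemma \ref{mainlemma3} was obtained from Lemmas \ref{mainlemma1} and \ref{mainlemma2}, the only genuinely new features being the twisted multiplication and involution of the crossed-product $*$-algebra and the fact that the ambient Hilbert space is now $L^2(M^{\otimes G})\otimes\ell^2(G)$ rather than $\ell^2(G)$. As usual, we may assume $p$ is a $*$-monomial. Expanding $p(\vec y)$ using the identities $(b_1u_g)(b_2u_k)=b_1\sigma_g(b_2)u_{gk}$ and $(bu_g)^*=\sigma_{g^{-1}}(b^*)u_{g^{-1}}$, one obtains a finite sum $p(\vec y)=\sum_t a_t u_{w_t}$, in which each $w_t$ is a fixed word in $g_1,\dots,g_n$ and their inverses --- the same words, with each $g_i$ replaced by $h_i$, that occur when one expands $p(\vec z)$ --- and each $a_t\in M^{\otimes G}$ is a product of $\sigma$-translates, and adjoints, of the $b_{s,\ell}$. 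Since each $b_{s,\ell}$ is supported on a fixed finite set of tensor coordinates of $M^{\otimes G}$, the Bernoulli shift sends each such translate to a finite set of coordinates indexed by (left translates of) partial products of the $g_i$; which coordinates occur, and which of them coincide, is part of the quantifier-free type of $\vec g$. As $(G,\vec g)\baf_1(H,\vec h)$ --- and hence $(G,\vec g)\baf_0(H,\vec h)$ --- the expansion of $p(\vec z)$ has exactly the same combinatorial shape, its coefficients $a_t'\in M^{\otimes H}$ being obtained from the $a_t$ under the obvious relabelling of coordinates.

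For the norm equality I would argue as in the proof of Lemma \ref{mainlemma2analog}. The finitely supported vectors --- finite linear combinations of $\bigl(\bigotimes_{g\in G}\xi_g\bigr)\otimes\delta_t$ in which $\xi_g$ is the canonical cyclic vector of the $g$-th tensor factor for all but finitely many $g$ --- are $\|\cdot\|_2$-dense in the unit ball of $L^2(M^{\otimes G})\otimes\ell^2(G)$. Suppose $\|p(\vec y)\|>r$ and fix such a vector $\xi$ with $\|\xi\|\le 1$ and $\|p(\vec y)\xi\|>r$; let $\vec{g}^{*}$ enumerate all the elements of $G$ occurring in $\xi$ (those $g$ for which some $\xi_g$ is not the cyclic vector, together with the $\ell^2$-indices). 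Applying $p(\vec y)=\sum_t a_t u_{w_t}$ to $\xi$, grouping the resulting terms by their $\ell^2(G)$-index, and expanding each $L^2(M^{\otimes G})$-inner product as a product over tensor coordinates, one sees that $\|p(\vec y)\xi\|$ is completely determined by the coincidence pattern among the group elements $w_t\cdot s$ (for $s$ an $\ell^2$-index of $\xi$) and the coincidence pattern among the tensor coordinates carrying the various $\sigma$-translated $b$'s and $\xi_g$'s --- both of which are quantifier-free facts about $\vec{g}\,\vec{g}^{*}$ --- together with the fixed numerical data (traces of products of $b$'s, inner products of the $\xi_g$'s) that does not mention $\vec g$ at all. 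Using $(G,\vec g)\baf_1(H,\vec h)$, pick $\vec{h}^{*}$ from $H$ with $(G,\vec{g}\,\vec{g}^{*})\baf_0(H,\vec{h}\,\vec{h}^{*})$, and let $\eta\in L^2(M^{\otimes H})\otimes\ell^2(H)$ be the vector assembled from the same $\xi_g$'s and $b$'s placed into the $H$-coordinates prescribed by $\vec{h}^{*}$. Then $\|\eta\|=\|\xi\|\le 1$ and, by the preceding sentence, $\|p(\vec z)\eta\|=\|p(\vec y)\xi\|>r$, so $\|p(\vec z)\|>r$. The argument is symmetric, so $\|p(\vec y)\|=\|p(\vec z)\|$.

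The one point requiring care --- and presumably the reason the proof is only ``straightforward'' rather than immediate --- is the bookkeeping in the second paragraph: one must verify that, when $p(\vec y)$ acts on $\xi$, the Bernoulli shifts $\sigma_{w_t}$ move the tensor coordinates both of $\xi$ and of the coefficients $a_t$ around strictly according to group multiplication, so that two coordinates interact (multiply inside $M^{\otimes G}$, or pair off in an $L^2$-inner product) precisely when the corresponding group elements are equal, a quantifier-free condition. Granting this, the computation of $\|p(\vec y)\xi\|$ factorizes over tensor coordinates through the trace, exactly as $\tr p(\vec y)$ did in Lemma \ref{mainlemma1analog}, and depends on $\vec g$ only through the quantifier-free type of $\vec{g}\,\vec{g}^{*}$; with that in hand, the rest of the argument is a verbatim transcription of the proof of Lemma \ref{mainlemma3}.
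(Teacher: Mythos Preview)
Your proposal is correct and follows exactly the approach the paper indicates: the paper gives no proof beyond the remark that the lemmas are ``straightforward (following Lemmas \ref{mainlemma1}, \ref{mainlemma2}, and \ref{mainlemma3}) and are left to the reader,'' and you have carried out precisely that adaptation, handling the twisted multiplication/involution and the Bernoulli-shifted tensor coordinates with the appropriate bookkeeping. Your reading of the coefficients as finitely supported in $M^{\otimes G}$ (transported to $M^{\otimes H}$ via the relabelling $\vec g\mapsto\vec h$) is the one needed for the application in Theorem~\ref{generalizedmain}, where the dense subalgebra is $\bigcup_{\vec g} M^{\odot\vec g}[G]$.
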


 We can now generalize our main result as follows:

\generalizedmain

% \begin{thm}\label{generalizedmain}
% For any groups $G$ and $H$ such that $G\equiv_{1+\alpha}H$ and any tracial von Neumann algebra $M$, we have that $M^{\otimes G}\rtimes G\equiv_\alpha M^{\otimes H}\rtimes H$.
% \end{thm}

\begin{proof}
The proof follows the same structure as the proof of Theorem \ref{main1}, which in the context of the current theorem is the case that $M=\bb C$. Just as $\bb C[G]$ is dense in $L(G)$, $M^{\otimes G}[G]$ is dense in $M^{\otimes G}\rtimes G$ and the union of the $M^{\odot \vec g}$'s, as $\vec g$ ranges over finite tuples from $G$, is an SOT dense $*$-subalgebra of $M^{\otimes G}$.  %One must also check that the operator norms of the elements being played ``in response'' are correct; this follows from an argument analogous to that of the proof of Lemma \ref{mainlemma2}, using the Hilbert spaces $L^2(M^{\otimes G})\otimes \ell^2(G)$ and $L^2(M^{\otimes H})\otimes \ell^2(H)$; we leave the details to the reader.
\end{proof}

In certain circumstances, we can ensure that the above crossed products will not be isomorphic.  

 \begin{fact}[Popa \cite{popapaper}]\label{popa}
Suppose that $M$ and $N$ are the group measure space von Neumann algebras corresponding to the Bernoulli actions of countable ICC groups $G$ and $H$ on $[0,1]^G$ and $[0,1]^H$ respectively.  Further suppose that $G$ and $H$ have relative property (T) over an infinite subgroup.  Then $M\cong N$ implies $G\cong H$.
 \end{fact}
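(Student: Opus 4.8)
The plan is to run Popa's deformation-versus-rigidity machinery, of which this theorem is one of the original applications. Fix a trace-preserving $*$-isomorphism $\theta\colon M\to N$, where $M=(X,\mu)\rtimes_\sigma G$ with $(X,\mu)=([0,1]^G,\mathrm{Leb}^G)$ and likewise $N=(Y,\nu)\rtimes_\tau H$; write $A=L^\infty(X)$ and $B=L^\infty(Y)$ for the two Cartan subalgebras. Let $G_0\leq G$ and $H_0\leq H$ be the infinite subgroups over which $G$ and $H$ have relative property (T) (so that $G$ and $H$ are w-rigid). The two structural inputs are: (i) relative property (T) of $(G,G_0)$, which makes the inclusion $L(G_0)\subseteq M$ \emph{rigid} in Popa's sense, i.e.\ every sequence of subunital trace-preserving completely positive maps on $M$ tending to the identity does so uniformly on the unit ball of $L(G_0)$; and (ii) the malleability of the Bernoulli action. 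The goal is to play these against each other to pin down the position of the Cartan subalgebra, reduce to an orbit equivalence, and finally apply cocycle superrigidity.

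First I would set up the malleable deformation of the $H$-Bernoulli action. Using the Gaussian/doubling construction, embed $B=L^\infty([0,1]^H)$ into $\tilde B=L^\infty(([0,1]^2)^H)$ so that the $H$-action extends, forming $\tilde N=\tilde B\rtimes H\supseteq N$. Popa's construction furnishes a one-parameter family $(\alpha_t)_{t\in\mathbb R}$ of trace-preserving automorphisms of $\tilde N$ together with a period-two symmetry $\beta$ satisfying $\beta\alpha_t\beta=\alpha_{-t}$, $\alpha_0=\mathrm{id}$, $\beta|_N=\mathrm{id}$, and $\alpha_t\to\mathrm{id}$ pointwise in $\|\cdot\|_2$ as $t\to 0$; crucially the deformation is \emph{transversal} in Popa's sense, so that the distance $\|\alpha_{2t}(x)-x\|_2$ is controlled by the distance of $\alpha_t(x)$ from $N$.

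The heart of the argument is the rigidity step. Transporting (i) through $\theta$, the inclusion $\theta(L(G_0))\subseteq N$ is rigid, so applying the deformation gives that $\alpha_t\to\mathrm{id}$ \emph{uniformly} on the unit ball of $\theta(L(G_0))$. Combined with transversality, this forces, for some small $t$, that $\alpha_t(\theta(L(G_0)))$ does not move away from $N$, and Popa's spectral-gap/intertwining analysis then yields an intertwiner: $\theta(L(G_0))\prec_N L(H)$ in the sense of intertwining-by-bimodules. Using that $G_0$ is infinite together with the mixingness of the Bernoulli action (so that corners of $B$ are ``far'' from $L(H)$) and a quasi-normalizer computation exploiting the ICC hypothesis, I would upgrade this to $\theta(L(G))\prec_N L(H)$ and, running the symmetric argument, conclude that the two Cartan subalgebras coincide up to unitary conjugacy: $\theta(A)=uBu^*$ for some unitary $u\in N$.

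Replacing $\theta$ by $\mathrm{Ad}(u)\circ\theta$, we may assume $\theta(A)=B$. By Singer's theorem (the $W^*$-form of the Feldman--Moore correspondence), such a Cartan-preserving isomorphism of group-measure-space factors is implemented by an orbit equivalence between the Bernoulli actions $G\acts(X,\mu)$ and $H\acts(Y,\nu)$, together with a scalar $2$-cocycle that the ICC/Bernoulli setup allows one to trivialize. Finally I would invoke Popa's cocycle superrigidity theorem for malleable actions of groups with relative property (T) over an infinite subgroup: the associated Zimmer cocycle is cohomologous to a homomorphism, which untwists the orbit equivalence into a genuine conjugacy of the actions and in particular produces a group isomorphism $G\cong H$, as desired. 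The main obstacle is the rigidity step of the third paragraph: making the passage from ``the deformation converges uniformly on the rigid subalgebra'' to an actual intertwining $\theta(L(G))\prec_N L(H)$, and then the delicate normalizer calculus needed to promote an intertwining of the rigid subgroup algebra to uniqueness of the Cartan subalgebra.
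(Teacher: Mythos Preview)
The paper does not prove this statement at all: it is recorded as a \emph{Fact} with a citation to Popa~\cite{popapaper} and is used as a black box in the subsequent corollary. There is therefore nothing in the paper to compare your argument against.

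That said, your outline is a faithful high-level sketch of Popa's deformation/rigidity strategy from the cited paper: build the malleable deformation of the Bernoulli $H$-action on the doubled base, use relative property~(T) of $(G,G_0)$ to force uniform convergence of the deformation on $\theta(L(G_0))$, extract an intertwining into $L(H)$, upgrade via normalizer/mixing arguments, and finish with cocycle superrigidity to turn the resulting orbit equivalence into a conjugacy. One point where your ordering differs slightly from Popa's: in the original argument the intertwining is first promoted to a unitary conjugacy of the \emph{group} von Neumann algebras (up to a character/scalar cocycle), and the conjugacy of the Cartan subalgebras is then read off from that, rather than deducing Cartan uniqueness directly from $\theta(L(G))\prec_N L(H)$ as you suggest. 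You correctly flag the normalizer calculus as the delicate step; that is indeed where most of the technical work in~\cite{popapaper} lies.
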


 \begin{cor}
For any ordinal $\alpha\geq \omega$, there are countable ICC groups $G$ and $H$ such that, setting $M$ and $N$ to be the group measure space von Neumann algebras corresponding to the Bernoulli actions of $G$ and $H$ on $[0,1]^G$ and $[0,1]^H$ respectively, we have $G\baf_\alpha H$, whence $M\baf_\alpha N$, but $M\not\cong N$.
 \end{cor}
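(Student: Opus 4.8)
The plan is to apply Theorem~\ref{generalizedmain} with $M=L^\infty([0,1],\mathrm{Leb})$. Then $M^{\otimes G}=L^\infty([0,1]^G)$ carries the Bernoulli action of $G$, and $M^{\otimes G}\rtimes G=([0,1]^G)\rtimes G$ is exactly the group measure space algebra appearing in Fact~\ref{popa}. Since $\alpha\geq\omega$ we have $1+\alpha=\alpha$, so Theorem~\ref{generalizedmain} gives: if $G\baf_\alpha H$ then $([0,1]^G)\rtimes G\baf_\alpha([0,1]^H)\rtimes H$. It therefore suffices to produce, for each $\alpha$ (with $\omega\le\alpha<\omega_1$, the range of interest), uncountably many pairs of countable ICC groups $G$ and $H$ that are $\baf_\alpha$-equivalent, nonisomorphic, and each possess relative property (T) over an infinite subgroup; Fact~\ref{popa} then upgrades $G\not\cong H$ to nonisomorphism of the two Bernoulli crossed products.

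To build such groups, fix an infinite ICC group $\Gamma$ with Kazhdan's property (T)---say $\Gamma=SL_3(\bb Z)$---and for a countable torsion-free abelian group $A$ put $G_A:=(A*\bb Z)\times\Gamma$. If $A$ is nontrivial then $G_A$ is ICC, since $A*\bb Z$ is a free product of two infinite groups (hence ICC) and a direct product of ICC groups is ICC. The subgroup $\{e\}\times\Gamma$ of $G_A$ is infinite, and since $\Gamma$ has property (T) the pair $(G_A,\{e\}\times\Gamma)$ has relative property (T). Moreover, if $A\baf_\alpha A'$ then $A*\bb Z\baf_\alpha A'*\bb Z$ by Lemma~\ref{freebaf} (using $\bb Z\baf_\alpha\bb Z$), and hence $G_A\baf_\alpha G_{A'}$ by Lemma~\ref{directbaf}.

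The crucial point is that $A\not\cong A'$ forces $G_A\not\cong G_{A'}$. I would prove this by showing that $\{e\}\times\Gamma$ is a characteristic subgroup of $G_A$---in fact the largest normal subgroup of $G_A$ with property (T). If $N\trianglelefteq G_A$ has property (T), then its image $\bar N$ under the projection $G_A\twoheadrightarrow A*\bb Z$ is a normal subgroup of $A*\bb Z$ with property (T); since property (T) implies Serre's fixed-point property for actions on trees, $\bar N$ fixes a vertex of the Bass--Serre tree of $A*\bb Z$, hence is contained in a conjugate of $A$ or of $\bb Z$, hence is amenable; being amenable with property (T) it is finite, and being torsion-free it is trivial. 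Thus $N\leq\{e\}\times\Gamma$, while $\{e\}\times\Gamma$ is itself normal with property (T); so it is the unique largest such subgroup and is fixed by every automorphism of $G_A$. Hence an isomorphism $G_A\cong G_{A'}$ descends to an isomorphism of quotients $A*\bb Z\cong G_A/(\{e\}\times\Gamma)\cong G_{A'}/(\{e\}\times\Gamma)\cong A'*\bb Z$, and the Kurosh subgroup theorem (exactly as used in the corollaries above) then yields $A\cong A'$ for nontrivial torsion-free abelian $A,A'$, a contradiction.

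Finally, isomorphism of nontrivial countable torsion-free abelian groups is complete analytic \cite{DowneyMontalban}, so the Borel-complexity corollary of Subsection~\ref{backandforth} provides, for each $\alpha<\omega_1$, uncountably many pairs $(A,A')$ of nontrivial torsion-free abelian groups with $A\baf_\alpha A'$ but $A\not\cong A'$. Taking $G=G_A$ and $H=G_{A'}$ and combining the three preceding paragraphs with Fact~\ref{popa} completes the proof. The one genuinely delicate step is the characteristic-subgroup argument transferring nonisomorphism of $A,A'$ to nonisomorphism of $G_A,G_{A'}$---in particular ruling out stray normal property (T) subgroups of $A*\bb Z$; everything else is bookkeeping with tools already established.
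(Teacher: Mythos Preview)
Your proof is correct and uses exactly the same construction as the paper: set $G=(A*\bb Z)\times SL_3(\bb Z)$ for a torsion-free abelian group $A$, invoke Lemmas~\ref{freebaf} and~\ref{directbaf} for the back-and-forth equivalence, and apply Fact~\ref{popa} to pass from $M\cong N$ to $G\cong H$. The only difference is in justifying the implication $G_A\cong G_{A'}\Rightarrow A\cong A'$: the paper's proof says this follows from ``two applications of the Kurosh subgroup theorem,'' while you instead show that $\{e\}\times\Gamma$ is characteristic (as the unique maximal normal subgroup with property~(T)), pass to the quotient $A*\bb Z$, and then apply Kurosh once. Your argument is more explicit here---the paper's first invocation of Kurosh is terse, since Kurosh concerns free products and the passage through the direct factor $SL_3(\bb Z)$ requires some justification---so your characteristic-subgroup route is a welcome clarification rather than a genuine departure.
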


 \begin{proof}
Set $G=(G'*\bb Z)\times \operatorname{SL}_3(\bb Z)$ and $H=(H'*\bb Z)\times \operatorname{SL}_3(\bb Z)$, where $G'$ and $H'$ are torsion-free abelian groups satisfying $G'\baf_\alpha H'$ but $G'\not\cong H'$. By Lemmas \ref{directbaf} and \ref{freebaf}, we have that $G\baf_\alpha H$.  If $M\cong N$, then by Fact \ref{popa}, we have $G\cong H$, whence, by two applications of the Kurosh subgroup theorem, we have $G'*\bb Z\cong H'*\bb Z$ and thus $G'\cong H'$, a contradiction.
 \end{proof}

 % We couldn't get this in the case of just group von Neumann algebras.  As pointed out to me by Sasyk, isomorphism of group von Neumann algebras or group measure space factors are both complete analytic, so we can always find groups whose associated group von Neumann algebras or measure space factors are $\equiv_\alpha$ but not isomorphic; it's the transfer property that is what's interesting.
 
Let us now return to the more general context of an action $G\acts^\sigma M$ of a group $G$ on a tracial von Neumann algebra $M$ and the crossed product algebra $M\rtimes_\sigma G$. 
 We view an action $G\acts^\sigma M$ as a two-sorted structure with sorts for $G$ and for $M$.  We equip the first sort with the language for (discrete) groups and the second sort for the language for tracial von Neumann algebras as above.  Moreover, there is a function symbol from $G\times M$ to $M$ for the action.  (The above description is a slight abuse of terminology as the usual language for $M$ is itself many sorted with infinitely many sorts, as described above, whence there need to be many function symbols for the action as well.)

Based on Theorem \ref{generalizedmain}, it becomes natural to ask:

\begin{question}
 For a weak modulus $\Omega$, suppose that $G\acts^\sigma M\bafO{\Omega}_{1+\alpha} H\acts^\rho N$.  Do we have $M\rtimes_\sigma G\bafO{\Omega}_\alpha N\rtimes_\rho H$?
\end{question}

On the one hand, this seems like it should be true by analogy with the discrete case: such nicely behaved transformations always preserve back-and-forth equivalence in the discrete case. But to actually prove this, we would need to prove an approximate analog of Lemma \ref{mainlemma1analog} relative to some weak modulus, and we do not see how to do this. This suggests that one should study Borel functors and infinitary interpretations in the continuous setting (in the style of \cite{HTM,HTMM}) and figure out what the role of weak moduli are there.

\end{document}